\documentclass[a4paper]{amsart}
\usepackage{amsfonts}
\usepackage{amsmath}
\usepackage{mathrsfs}
\usepackage{graphicx}
\usepackage{amsmath,amsthm,amssymb,amscd}
\usepackage{color}
\usepackage{array}
\usepackage{float}
\usepackage{enumerate}
\usepackage[T1]{fontenc}
\usepackage{pgfplots}
\usetikzlibrary{arrows}
\newtheorem{thm}{Theorem}[section]
\newtheorem{lem}[thm]{Lemma}
\newtheorem{prop}[thm]{Proposition}
\newtheorem{defn}[thm] {Definition}
\newtheorem{exmp}[thm]{Example}
\newtheorem{cor}[thm]{Corollary}
\theoremstyle{remark}
\newtheorem{rem} [thm]{Remark}

\theoremstyle{definition}

\newtheorem{que}[thm]{Question}

\newtheorem{fact}[thm]{Fact}

\DeclareMathOperator{\diam}{diam}
\DeclareMathOperator{\dist}{dist}
\DeclareMathOperator{\Per}{Per}
\DeclareMathOperator{\Int}{Int}
\DeclareMathOperator{\Rec}{Rec}
\DeclareMathOperator{\SA}{SA}

\DeclareMathOperator{\Orb}{Orb}

\newcommand{\hn}{\hat{n}}

\newcommand{\hz}{\hat{z}}
\newcommand{\tx}{\tilde{x}}
\newcommand{\tz}{\tilde{z}}

\newcommand{\hx}{\hat{x}}

\newcommand{\eps}{\varepsilon}

\makeatletter
\@namedef{subjclassname@2020}{%
  \textup{2020} Mathematics Subject Classification}
\makeatother

\definecolor{uuuuuu}{rgb}{0.26666666666666666,0.26666666666666666,0.26666666666666666}
\definecolor{ffqqqq}{rgb}{1,0,0}
\begin{document}

\title[On the structure of $\alpha$-limit sets]{On the structure of $\alpha$-limit sets of backward trajectories for graph maps}

\author[M. Fory\'s]{Magdalena Fory\'s-Krawiec}
\address[M. Fory\'s]{AGH University of Science and Technology, Faculty of Applied
	Mathematics, al. Mickiewicza 30, 30-059 Krak\'ow, Poland}
\email{maforys@agh.edu.pl}
\author[J. Hant\'{a}kov\'{a}]{Jana Hant\'{a}kov\'{a}}
\address[J. Hant\'{a}kov\'{a}]{AGH University of Science and Technology, Faculty of Applied
	Mathematics, al.
	Mickiewicza 30, 30-059 Krak\'ow, Poland
		-- and --
		Mathematical Institute of the Silesian University in Opava, Na Rybn\'i\v {c}ku 1, 74601, Opava, Czech Republic}
\email{jana.hantakova@math.slu.cz}
\author[P. Oprocha]{Piotr Oprocha}
\address[P. Oprocha]{AGH University of Science and Technology, Faculty of Applied
	Mathematics, al.
	Mickiewicza 30, 30-059 Krak\'ow, Poland
	-- and --
	Centre of Excellence IT4Innovations - Institute for Research and Applications of Fuzzy Modeling, University of Ostrava, 30. dubna 22, 701 03 Ostrava 1, Czech Republic.}
\email{oprocha@agh.edu.pl}

\begin{abstract}
In the paper we study what sets can be obtained as $\alpha$-limit sets of backward trajectories in graph maps. We show that in the case of mixing maps,
all those $\alpha$-limit sets are $\omega$-limit sets and for all but finitely many points $x$, we can obtain every $\omega$-limits set as the $\alpha$-limit
set of a backward trajectory starting in $x$. For zero entropy maps, every $\alpha$-limit set of a backward trajectory is a minimal set. In the case of maps with positive entropy, we obtain a partial characterization which is very close to complete picture
of the possible situations.
\end{abstract}

\keywords{graph map, limit set, mixing, topological entropy}
\subjclass[2020]{Primary 37E25; 37B20, 37B40}
\maketitle

\section{introduction and main results}
 Let a \emph{dynamical system} be defined as a pair $(X,f)$ where $X$ is a compact metric space and $f$ is a continuous map acting on $X$. To understand the dynamical properties of a system it is necessary to analyze the behavior of the trajectories of any point $x\in X$ under the iteration of $f$. Limit sets of trajectories are a helpful tool for 
 understanding of qualitative properties of dynamics.
The \emph{$\omega$-limit set} (set of limit points of forward trajectory of a point $x$; denoted $\omega(x)$), is among fundamental objects in theory of dynamical systems. The first question that comes to mind, is whether a given closed invariant subset of $X$ is the $\omega$-limit set of some point $x\in X$. Finding the answer is hard in general, however some cases are known. For example, a characterization of $\omega$-limit sets of a continuous map acting on the compact interval was provided by Blokh et al. in \cite{BBHS}.  A closely related question asks which dynamical systems may occur as $\omega$-limit sets in larger systems. These abstract $\omega$-limit sets were studied by Bowen \cite{Bow} and Dowker and Frielander \cite{DF}. Of particular interest are invariant sets obtained as limits (in the Hausdorff metric) of $\omega$-limit sets. Sharkovsky proved in \cite{Sh} that every $\omega$-limit sets of continuous map on the interval is contained in the maximal one, and later Blokh et al. in \cite{BBHS} showed that the family of all $\omega$-limit sets of $f$ endowed with the Hausdorff metric is compact.
While the aforementioned results about $\omega$-limit sets were first obtained for interval maps, some of them hold for maps acting on graphs, dendrites, Cantor space and others, e.g. see \cite{ChGSS,KKM,BGKR}.
In general compact metric spaces only partial results are known and are usually hard to obtain (e.g. see \cite{BGOR}).

The properties of $\omega$-limit sets for \emph{graph maps} are to some extent similar to the interval case (however the proofs are usually much harder). Every $\omega$-limit set is contained in a maximal one since the family of $\omega$-limit sets of a graph map is closed with respect to the Hausdorff metric by result of Mai and Shao \cite{MaiShao}. By Blokh's Decomposition Theorem \cite{B1}, there are only four types of maximal $\omega$-limit sets: basic sets, solenoidal sets, circumferential sets and periodic orbits. For a graph map $f$, the topological entropy of $f$ is positive if and only if it possesses a basic sets (i.e. infinite maximal $\omega$-limit sets containing a periodic point; see Hric and M\' alek \cite{M}). The topological characterization of $\omega$-limit sets of graph maps \cite{M} shows that an $\omega$-limit set is a finite set, or an infinite compact nowhere dense set, or a cycle of connected subgraphs. Conversely, whenever a set $A$ is of one of the above forms then there is a graph map $f$ such that $A$ is an $\omega$-limit set for $f$.

As a dual concept to $\omega$-limit sets the \emph{$\alpha$-limit sets} (denoted $\alpha(x)$) were introduced. Intuitively, they represent a ``source'' of the trajectory of a point. While for invertible maps $\alpha$-limit sets can be defined as $\omega$-limits sets of dynamical system with reversed time, for noninvertible maps there are a few possibilities how to construct the limit along the backward trajectory which can not be uniquely defined. One possibility is to take as an $\alpha$-limit set the set of all accumulation points of the set of pre-images $f^{-n}(x)$. This approach was used by Coven and Nitecki \cite{CN}, who showed that for an interval map, a point $x$ is non-wandering if and only if $x\in\alpha(x)$. This approach attracted some attention, e.g. see  Cui and Ding \cite{CD} on $\alpha$-limit sets of unimodal interval maps. Another approach (see \cite{BDLO}), which is studied in the present paper, instead of complete preimages considers a fixed backward branch and its accumulation points forming an \emph{$\alpha$-limit set of a backward branch}.  By results of \cite{BDLO} for interval maps, every $\alpha$-limit set of a backward branch is an $\omega$-limit set while the converse is not true. 
The third approach to $\alpha$-limit sets, proposed by Hero in \cite{Hero}, falls somewhere between two possibilities mentioned above. It considers the union of $\alpha$-limit sets over all backward branches starting at a point $x$, and call obtained set the \emph{special $\alpha$-limit set} (denoted $s\alpha(x)$). Recent studies by Kolyada et al.~\cite{KMS} and Hant\' akov\' a and Roth \cite{HR} provided basic properties of special $\alpha$-limit sets for interval maps. For instance, $s\alpha(x)$ does not need to be closed and its isolated points are always periodic, which is in some contrast to the properties of $\omega(x)$. Outside the realm of one-dimensional dynamics the situation is even more complicated. It has been shown that $s\alpha$-limit sets are always analytic, but not necessarily Borel~\cite{JMR}. If we denote by $\SA(f)$ (respectively, $\omega(f)$) the union of $\alpha$-limit sets of all backward branches (respectively, all $\omega$-limit sets) of a map $f$ and by $\Rec(f)$ the set of all recurrent points of $f$,  then $\Rec(f)\subseteq \SA(f)\subseteq \overline{\Rec(f)}\subseteq \omega(f)$, for every map $f$ on the topological graph (see \cite{SXL}, cf. \cite{Hero}, \cite{BDLO}). It was shown in \cite{STSXQ} that  $\Rec(f)\subseteq \SA(f)$ holds in the special case of maps acting on dendrites with countable set of endpoints and that there are dendrite maps with $\SA(f)\not\subset  \overline{\Rec(f)}$. We show that $\Rec(f)\subseteq \SA(f)$ holds for general dynamical systems in Corollary \ref{trans}.

Our research is motivated by the following question:
\begin{que}\label{que:main}
Let $A=\alpha(\{x_j\}_{j\leq 0})$ be an $\alpha$-limit set of a backward branch $\{x_j\}_{j\leq 0}$ of a map $f$ on topological graph. Is $A$ an $\omega$-limit set? How many different sets $A$
can be generated using backward branches starting at $x_0$?
\end{que}
In the paper we provide full characterization under some additional conditions on $f$, and almost complete picture in general case.

Complete answer to Question~\ref{que:main} in the case of topologically mixing $f\colon G \to G$ on topological graph $G$ is provided in Section \ref{sec:mixing}.
Strictly speaking we prove the following:
\begin{enumerate}
	\item for every $\omega$-limit set $\omega(y)$ in $G$ and every accessible point $x$ in $G$, there is a backward branch starting at $x$ with the $\alpha$-limit set being equal to $\omega(y)$,
	\item every $\alpha$-limit set of any backward branch in $G$ is an $\omega$-limit set of some point in $G$.
\end{enumerate}
Quite different, still complete, picture is obtained for maps $f\colon G \to G$ with zero topological entropy.
For these maps we prove in Section \ref{sec:zeroEnt} that
\begin{enumerate}
	\item family of $\alpha$-limit sets of backward
	branches coincides with the family of minimal sets,
	\item the collection of all $\alpha$-limit sets of backward branches starting at $x$ is rather thin - it contains at most one infinite set.
\end{enumerate}
We also provide an example that in the above case, beyond one infinite minimal set, $\alpha$-limit sets of backward branches starting at $x$ can form quite large family of periodic orbits.

When considering maps with positive entropy, some uncertainty enters our description. In this case, we may observe phenomena specific both for zero entropy maps and for mixing maps,
however tools we use (in Section \ref{sec:PosEnt}) do not allow us to completely reveal the structure of some $\alpha$-limit sets.
We prove that for all but at most countably many points $x$ from a basic set $D$ and every infinite $\omega$-limit set $\omega(y)\subset D$, there exists a backward branch $\{z_j\}_{j\leq 0}$ starting at $x$ such that $\alpha(\{z_j\}_{j\leq 0})=\omega(y)\cup R$ where $R$ is at most countable subset of isolated points of $\alpha(\{z_j\}_{j\leq 0})$. This shows that for a typical point $x$ from a basic set the collection of all $\alpha$-limit sets of backward branches starting at $x$ is abundant. 
By results mentioned earlier, graph maps with positive entropy must contain a basic set, but it may contain also other maximal $\omega$-limit sets which are not limited to zero entropy maps only, that is solenoidal sets, circumferential sets and
periodic orbits. Thus we may detect this kind of $\alpha$-limit sets of backward branches starting at a point $x$ in maps with positive entropy.

As was stated above, our tools do not allow us to answer whether at most countable set $R$ is empty or not, however its possible existence is a result of incomplete control of backward trajectory in the construction rather than a fact. In practice it may happen that these $\alpha$-limit sets behave exactly the same as for other backward trajectories, that is they always coincide with $\omega$-limit sets and all $\omega$-limit sets in basic sets can appear as $\alpha$-limit sets (recall results of \cite{BDLO} that some $\omega$-limit sets are never $\alpha$-limit sets for zero entropy maps).
These aspects of Question~\ref{que:main} remain as open problem for further research.

\section{preliminaries}
Throughout the paper, a \emph{(topological) graph} is a non-degenerate compact connected metric space $G$ containing a finite subset $\mathcal{V}$ such that each connected component of $G\setminus \mathcal{V}$ is homeomorphic to an open interval. A \emph{branching point} is a point in $G$ having no neighborhood homeomorphic to an interval (of any kind). The set of branching points is included in $\mathcal{V}$ hence finite. 
An \emph{arc} is a subset of $G$ homeomorphic to an interval. If $J\subset G$ is an arc with endpoints $x$ and $y$, then it is convenient to write $J=[x,y]$, which means that
	we identify $J$ with interval $[0,1]$ by a homeomorphism $\pi \colon J\to [0,1]$ with $\pi(x)=0, \pi(y)=1$. This way, we may use standard ordering on $[0,1]$ in $J$. In particular, for
	for $b \in [x,y]\setminus\{x,y\}$ we may write $x<b<y$ (using ordering of $J$) and also $[b,y]\subset [x,y]$ is defined in a natural way. For $n \in \mathbb{N}$ a subgraph $St(x) \subset G$ is \emph{$n$-star with center } $x \in G$ if there is a continuous injection $\varphi:St(x)\to \mathbb{C}$ such that $\varphi(x)=0$ and $\varphi(S) = \{re^{\frac{2k\pi i}{n}}: r \in [0,1], k=1,\dots,n\}$.
The \emph{degree} of a point $x \in G$ is given by the following formula:
$$
\deg(x) = \max\{ n \in \mathbb{N}: \text{ there exists an }n\text{-star in }G\text{ with center } x \} \in [1,\infty).
$$
If $\deg(x)=1$, then $x$ is called an \emph{endpoint}, while the degree of branching points is always at least $3$. We denote the set of all endpoints in $G$ by $End(G)$ and the set of all branching points in $G$ by $Br(G)$.
By a \emph{(graph) map} we mean a dynamical system on a graph, that is, a continuous map $f\colon G\rightarrow G$. The \emph{orbit of a point} $x \in G$ is the set $\Orb_f(x) = \{f^n(x):n \geq 0\}$, while the \emph{orbit of a set} $A \subset G$ is the set $\Orb_f(A) = \cup_{n=0}^{\infty} f^n(A)$. If the function $f$ in the above definitions is clear from the context, we use the notation $\Orb(x)$ and $\Orb(A)$. By $\Per(f)$ we denote the set of \textit{periodic points} of $f$, that is points with the property that $f^p(x)=x$ for some $p>0$. The smallest such $p$  is the \emph{period} of a point $x \in \Per(f)$. A set $A$ is \emph{invariant} if $f(A)\subseteq A$ and it is \emph{strongly invariant} if $f(A)=A$.

Map $f\colon G\to G$ is \emph{transitive} if for every pair of nonempty open subsets $U,V\subset G$ there is some integer $n>0$ such that $f^n(U)\cap V\neq \emptyset$ and \emph{totally transitive} if $f^n$ is transitive for all $n >0$. Map $f$ is \emph{sensitive} if there is $\delta>0$ such that for every nonempty open $U\subset G$ there is $n>0$ such that $\diam f^n(U)>\delta$ and it is \emph{mixing} if for every pair of nonempty open subsets $U,V\subset G$ there is an $N>0$ such that $f^n(U)\cap V\neq \emptyset$ for $n\geq N$. A point $x \in G$ is \emph{non-wandering} if for every neighborhood $U$ of $x$ and every $N>0$ there is some $n>N$ such that $f^n(U)\cap U \neq \emptyset$. If the opposite holds then we say $x$ is a \emph{wandering} point.

For a mixing graph map $f\colon G\to G$ we  define the set $\mathcal{I}(f)$ of \emph{inaccesible points} of $f$ as follows:
	\begin{equation}\label{eq:inac}
		\mathcal{I}(f) = G\setminus\bigcap_{U \in \mathcal{G}}\bigcup_{k=0}^{\infty}\Int f^k(U),
	\end{equation}
	where $\mathcal{G}$ is the family of all subgraphs of $G$. By the results of \cite{B1} we have that $\mathcal{I}(f)$ is a finite strongly invariant set and hence it consists of periodic points. We say that $x \in G$ is an \emph{accessible point} if $x \in G\setminus \mathcal{I}(f)$. 

A point $y$ belongs to the \emph{$\omega$-limit set of a point $x$}, denoted by $\omega_f(x)$, if and only if there is a strictly increasing sequence of natural numbers $\{n_i\}_{i\geq 0}$ such that $f^{n_i}(x)\to y$ as $i\to \infty$. We denote $\omega(f) = \bigcup_{x \in G}\omega_f(x)$. We say that $x$ is \emph{recurrent} if $x \in \omega_f(x)$ and by $\Rec(f)$ we denote the set of recurrent points for map $f$.   
A \emph{backward branch} of a point $x\in G$ is any sequence $\{x_i\}_{i\leq 0}\subset G$ such that $x_0=x$
and $f(x_i)=x_{i+1}$ for each $i<0$. 
A point $y$ belongs to the \emph{$\alpha$-limit set of a backward branch} $\{x_i\}_{i\leq 0}$, denoted by $\alpha_f(\{x_i\}_{i\leq 0})$, if and only if there is a strictly decreasing sequence of negative integers $\{n_i\}_{i\geq 0}$ such that $x_{n_i}\to y$ as $i\to \infty$. It is easy to see that both $\omega$-limit sets and $\alpha$-limit sets of backward branches are closed strongly invariant sets. We denote by $\SA(f)$ the union of all $\alpha$-limit sets of backward branches in $G$. If the function from the definition of $\omega$-limit set or $\alpha$-limit set of a backward branch is clear, we use the notation $\omega(x)$, $\alpha(\{x_j\}_{j\leq 0})$.

The following result holds for all transitive dynamical systems.
\begin{prop}
Let $(X,f)$ be a transitive dynamical system. For every $x\in X$ there is a backward branch $\{x_i\}_{i\leq 0}$ such that $x\in \alpha_f(\{x_i\}_{i\leq 0})$.
\end{prop}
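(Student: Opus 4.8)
The plan is to pass to the natural extension (inverse limit) of $(X,f)$: there, a single well-chosen backward branch will witness every point of $X$ in its $\alpha$-limit set at once.

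First I would dispose of a degenerate case. If $X$ has an isolated point $p$, then applying transitivity to the pair $U=V=\{p\}$ yields $n>0$ with $f^n(p)=p$, so $\Orb_f(p)$ is a finite dense set and hence $X=\Orb_f(p)$; in this case the unique (periodic) backward branch through any point has $\alpha$-limit set equal to $X$, and there is nothing more to prove. So assume from now on that $X$ is perfect. Then $f$ is surjective: a point $z$ with dense forward orbit exists by Baire's theorem, and $f(X)$ is closed and contains the dense set $\{f^n(z):n\ge1\}$, so $f(X)=X$; in particular backward branches exist.

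Next I would set up the inverse limit $\hat{X}=\{(x_i)_{i\le0}:x_i\in X,\ f(x_i)=x_{i+1}\ \text{for all}\ i<0\}$, equipped with the shift homeomorphism $\hat{f}$ (so that $\hat{f}^{-1}$ deletes the present coordinate and promotes the past), and the projection $\pi\colon\hat{X}\to X$, $\pi((x_i)_{i\le0})=x_0$. The points of $\hat{X}$ are precisely the backward branches of points of $X$; $\pi$ is a continuous surjection with $\pi\circ\hat{f}=f\circ\pi$; and $\pi(\hat{f}^{-n}(\hat{w}))=w_{-n}$ when $\hat{w}=\{w_i\}_{i\le0}$. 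A routine cylinder-set argument shows $\hat{f}$ is transitive — given two basic cylinders, transitivity of $f$ provides a short orbit segment passing from one to the other, and surjectivity of $f$ lets one complete that segment to a full backward branch, i.e.\ to a genuine point of $\hat{X}$ — and $\hat{X}$ is perfect because any cylinder neighbourhood of a backward branch contains another, distinct backward branch ($X$ being perfect). Since $\hat{f}$ is a homeomorphism, $\hat{f}^{-1}$ is transitive too, so Baire's theorem produces $\hat{w}\in\hat{X}$ whose $\hat{f}^{-1}$-orbit is dense. As $\hat{w}$ cannot be periodic (otherwise $X=\pi(\hat{X})$ would be finite), deleting any finite initial segment of its $\hat{f}^{-1}$-orbit leaves a set still dense in the perfect space $\hat{X}$; hence the $\omega$-limit set of $\hat{w}$ under $\hat{f}^{-1}$ is all of $\hat{X}$, i.e.\ $\omega_{\hat{f}^{-1}}(\hat{w})=\hat{X}$.

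Finally I would unwind this. With $\hat{w}=\{w_i\}_{i\le0}$, the sequence $(w_{-n})_{n\ge1}$ equals $(\pi(\hat{f}^{-n}(\hat{w})))_{n\ge1}$, so by continuity of $\pi$ and compactness of $\hat{X}$ its set of accumulation points is exactly $\pi(\omega_{\hat{f}^{-1}}(\hat{w}))$; therefore
\[
\alpha_f(\{w_i\}_{i\le0})=\pi\bigl(\omega_{\hat{f}^{-1}}(\hat{w})\bigr)=\pi(\hat{X})=X,
\]
and in particular the prescribed point $x$ lies in $\alpha_f(\{w_i\}_{i\le0})$. The only step that demands real work is the transitivity of $\hat{f}$, and this is exactly where surjectivity of $f$ is indispensable; the degenerate case, the surjectivity of $f$, the perfectness of $\hat{X}$, and the identity $\omega_{\hat{f}^{-1}}(\hat{w})=\hat{X}$ for a point with dense backward orbit are all standard.
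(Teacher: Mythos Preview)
Your argument is correct and takes a genuinely different route from the paper's proof. The paper proceeds by a direct nested-sets construction: starting from $\bar B(x,1)$, one repeatedly uses transitivity to pull back smaller and smaller closed balls around $x$, obtaining a decreasing family of closed sets with nonempty interior whose intersection singles out a backward branch that returns to $x$ along the subsequence $\{-\sigma_k\}$. In contrast, you pass to the natural extension $(\hat X,\hat f)$, observe that transitivity and surjectivity lift to make $\hat f$ (and hence $\hat f^{-1}$) transitive, and then invoke Baire to find a single backward branch $\hat w$ with $\omega_{\hat f^{-1}}(\hat w)=\hat X$; projecting gives $\alpha_f(\{w_i\})=X$. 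The trade-offs are clear: the paper's approach is entirely elementary and self-contained, requiring no inverse-limit machinery, and produces for each $x$ a tailored backward branch accumulating at $x$. Your approach imports more structure but is conceptually cleaner and in fact yields a \emph{stronger} conclusion---one backward branch whose $\alpha$-limit set is all of $X$, so it works uniformly for every $x$ at once. The only places one should be slightly careful are the lift of transitivity to $\hat f$ (where you correctly note surjectivity is essential, and one needs that hitting times $f^\ell(U)\cap V\neq\emptyset$ occur for arbitrarily large $\ell$, which follows from the existence of a point with dense orbit in a perfect space) and the perfectness of $\hat X$; both are, as you say, standard.
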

\begin{proof}
Let $U_1^0=\bar{B}(x,1)$. Then there is $n_1>0$ such that $f^{n_1}\bar{B}(x,1/2)\cap U_1^0\neq \emptyset$ and the set $U_2^{-n_1}=\bar{B}(x,1/2)\cap f^{-n_1}(U_1^0)$ is non-empty. For $i=0,1,\ldots, n_1$, denote $U_2^{-n_1+i}=f^i(U_2^{-n_1})$. Clearly $U_2^0\subset U_1^0$. Next, assume that a closed set $U_{k}^{-n_1-n_2-\ldots-n_k}$ with nonempty interior is defined such that $U_{k}^{-n_1-n_2-\ldots-n_k}\subset B(x,1/k)$. For simplicity we will denote  $\sigma_i=\sum_{j=1}^{i}n_j$, for every $i>0$ such that $n_1,n_2,\ldots, n_i$ is defined. There is $n_{k+1}>0$ such that $f^{n_{k+1}}(\bar{B}(x,1/(k+1))\cap U_{k}^{-\sigma_k})\neq \emptyset$. Put $U_{k+1}^{-\sigma_k-n_{k+1}}=\bar{B}(x,1/(k+1)\cap f^{-n_{k+1}}(U_{k}^{-\sigma_k})$. Then as before denote $U_{k+1}^{-\sigma_{k+1}+i}=f^i(U_{k+1}^{-\sigma_{k+1}})$, for $i=0,1,\ldots, \sigma_{k+1}$ .
  Clearly $U_{k+1}^{-\sigma_j}\subset U_{s}^{-\sigma_j}$ for any $j\leq k+1$ and any  $j\leq s\leq k$.
Replacing $U_{k+1}^{-\sigma_{k+1}}$ with a smaller closed ball contained in it, we may additionally requite
that $\diam U_{k+1}^{-i}<1/(k+1)$ for each $i\leq \sigma_{k+1}$.

Then for every $k\geq 1$ there is a unique point:
$$
x_{-\sigma_k}\in \bigcap_{j=k}^\infty U_{j}^{-\sigma_k}.
$$
Furthermore, by the construction $f^{n_{k+1}}(x_{-\sigma_{k+1}})=x_{-\sigma_k}$
and $d(x_{-\sigma_k},x)<1/k$. Putting $x_0=f^{n_1}(x_{-n_1})$ and $x_{-\sigma_{k}+i}=f^i(x_{ -\sigma_{k}})$, for every $i=1,\ldots, n_{k}-1$ and $k\geq 1$,
we define the desired backward branch $\{x_i\}_{i\leq 0}$.
\end{proof}
For every recurrent point $x\in X$, the dynamical system $(\omega_f(x),f)$ is transitive and obviously $x\in \omega_f(x)$. Thus we have the following corollary.
\begin{cor}\label{trans}
Let $(X,f)$ be a dynamical system. If $x\in \Rec(f)$ then there is a backward branch $\{x_i\}_{i\leq 0}$ such that $x\in \alpha_f(\{x_i\}_{i\leq 0})$. In particular, $\Rec(f)\subset \SA(f)$.
\end{cor}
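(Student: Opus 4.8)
The plan is to deduce the corollary directly from the preceding Proposition by passing to the subsystem carried by the $\omega$-limit set of $x$. First I would observe that, since $X$ is compact and $\omega_f(x)$ is closed, the restriction of $f$ to $\omega_f(x)$ makes $(\omega_f(x),f)$ a dynamical system in the sense required by the Proposition; here I use the already noted fact that $\omega$-limit sets are closed and strongly invariant.

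Next I would check that this subsystem is transitive. The point is that $x\in\Rec(f)$ forces $x\in\omega_f(x)$, whence $\overline{\Orb_f(x)}=\omega_f(x)$, so $x$ has dense orbit in $\omega_f(x)$; an orbit closure of a single point is transitive in the open-set sense needed by the Proposition (for every pair of nonempty relatively open $U,V\subset\omega_f(x)$ some forward image of $U$ meets $V$), and this covers both the infinite case and the degenerate case in which $\omega_f(x)$ is a periodic orbit. This is exactly the assertion recorded in the sentence just before the corollary, so it may simply be quoted rather than reproved.

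Then I would apply the Proposition to the transitive system $(\omega_f(x),f)$ and to the point $x\in\omega_f(x)$, obtaining a backward branch $\{x_i\}_{i\leq 0}\subset\omega_f(x)$ with $x$ in the $\alpha$-limit set of $\{x_i\}_{i\leq 0}$ computed in $(\omega_f(x),f)$. Since $\omega_f(x)\subseteq X$ and the defining relation $f(x_i)=x_{i+1}$ reads the same in $\omega_f(x)$ and in $X$, this sequence is also a backward branch of $x$ in $(X,f)$, and the set of accumulation points of $\{x_{n_i}\}$ is unchanged by enlarging the ambient space; hence $x\in\alpha_f(\{x_i\}_{i\leq 0})\subseteq\SA(f)$. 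As $x$ was an arbitrary recurrent point, $\Rec(f)\subseteq\SA(f)$.

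I do not expect a genuine obstacle: the corollary is a routine specialization of the Proposition. The only points deserving a moment of care are that the Proposition's hypotheses (compactness of the phase space and transitivity in the open-set formulation) really hold for $(\omega_f(x),f)$, and that backward branches together with their $\alpha$-limit sets are insensitive to whether one works inside $\omega_f(x)$ or inside the ambient $X$; both are immediate.
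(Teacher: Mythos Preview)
Your proposal is correct and follows exactly the paper's own argument: the sentence immediately preceding the corollary records that for recurrent $x$ the system $(\omega_f(x),f)$ is transitive with $x\in\omega_f(x)$, and the corollary is then deduced from the Proposition applied to that subsystem. Your write-up simply spells out the routine verifications (compactness of $\omega_f(x)$, transitivity via the dense orbit of $x$, and insensitivity of backward branches and their $\alpha$-limit sets to enlarging the ambient space) that the paper leaves implicit.
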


For fixed $n\geq 0$ and $\eps>0$ we define the \emph{Bowen ball}  as follows:
$$
B_n(x,\eps) = \{y \in G: d(f^i(x),f^i(y))\leq \eps \text{ for } i=0,\dots,n  \}
$$
and by $B'_n(x,\eps)$ we denote the connected component of the Bowen ball $B_n(x,\eps)$ that contains $x$. 

For any nonempty compact subsets $U,V \subset G$ we define their \emph{Hausdorff distance} by:
$$
d_H(U,V) = \max\{ \dist(u,V), \dist(v,U): u\in U, v \in V  \},
$$
where $\dist(x,U) =\inf\{ d(x,y): y \in U \}$.
We denote by $K(G)$ be the set of all compact subsets of $G$ equipped with the \textit{Hausdorff metric} $d_H$. We call a set $A\subseteq G$ the \emph{Hausdorff limit} of the sequence of compact sets $\{A_i\}_{i\geq 0}$ if $\{A_i\}_{i\geq 0}$ converges to $A$ in the metric $d_H$.

A subset $Y\subset G$ is \emph{internally chain transitive} if for every pair of points $u,v \in Y$ and every $\eps>0$ there is a finite sequence $z_0,\dots, z_n$ of points in $Y$ such that $z_0=u, z_n=v$ and $d(f(z_i),z_{i+1})<\eps$ for $i=0,\dots,n-1$. It is well-known fact that in any dynamical system $(X,f)$, every $\omega$-limit set is internally chain transitive and the same holds for $\alpha$-limit sets of a backward branch by \cite[Lemma 2.1]{HSZ}. Therefore $\alpha$-limit sets of a backward branch share the following property of $\omega$-limit sets.

\begin{lem}\label{lem:alphaPer}
Let $(X,f)$ be a dynamical system and $\alpha(\{x_j\}_{j\leq 0})\subset X$ be an $\alpha$-limit set of a backward branch $\{x_j\}_{j\leq 0}$. Then every periodic orbit that lies in $\alpha(\{x_j\}_{j\leq 0})$ but does not coincide with $\alpha(\{x_j\}_{j\leq 0})$ is not isolated in $\alpha(\{x_j\}_{j\leq 0})$. In particular, if $\alpha(\{x_j\}_{j\leq 0})$ consists of finitely many points, then these points form a single periodic orbit.
\end{lem}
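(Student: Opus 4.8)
The plan is to exploit the fact, quoted just above the statement, that $\alpha$-limit sets of backward branches are internally chain transitive, and to show that an internally chain transitive set containing an isolated periodic orbit must equal that orbit. So suppose $P\subset A:=\alpha(\{x_j\}_{j\leq 0})$ is a periodic orbit of period $p$ which \emph{is} isolated in $A$; I aim for a contradiction unless $P=A$. Since $P$ is isolated and $A$ is compact, there is $\eps>0$ such that the $\eps$-neighborhood of $P$ meets $A$ only in $P$, and moreover (shrinking $\eps$) the $p$ points of $P$ sit in pairwise disjoint $\eps$-balls and $f$ maps the $\eps$-ball around each point of $P$ into the $(\eps/2)$-ball around its image under $f$ — this uses uniform continuity of $f$. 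Fix a point $u\in P$ and take any $v\in A\setminus P$, assuming such a $v$ exists (if not, $A=P$ and we are done). By internal chain transitivity there is an $\eps$-chain $z_0=u,z_1,\dots,z_n=v$ inside $A$. Let $k$ be the largest index with $z_k\in P$; then $z_k\in P$, $z_{k+1}\in A$, and $d(f(z_k),z_{k+1})<\eps$. But $f(z_k)\in P$, so $z_{k+1}$ lies in the $\eps$-neighborhood of $P$, hence $z_{k+1}\in P$ by the choice of $\eps$, contradicting maximality of $k$ (since $k+1\leq n$ because $z_n=v\notin P$). Therefore no such $v$ exists and $A=P$.

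For the "in particular" clause, suppose $A$ is finite. A finite internally chain transitive set is a single periodic orbit: indeed, pick any $u\in A$; by internal chain transitivity applied with $\eps$ smaller than the minimal distance between distinct points of $A$, every $\eps$-chain in $A$ is in fact a genuine orbit segment ($d(f(z_i),z_{i+1})<\eps$ forces $f(z_i)=z_{i+1}$), so from $u$ one can reach every point of $A$ along the forward orbit of $u$ and also return to $u$; hence $\Orb(u)=A$ and $u$ is periodic. Alternatively one can simply invoke the first part: $A$ finite means every point of $A$ is isolated, and $A$ contains at least one periodic orbit (a finite invariant set always does, by the pigeonhole principle on $\Orb(u)$), which is then an isolated periodic orbit in $A$, so $A$ coincides with it.

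The only mild subtlety — and the step I would be most careful about — is arranging the constant $\eps$ so that two requirements hold simultaneously: (i) $\eps$-closeness to $P$ inside $A$ forces membership in $P$ (possible because $P$ is isolated in the compact set $A$), and (ii) the $\eps$-ball around each point of $P$ is carried by $f$ into the union of $\eps$-balls around $P$ (uniform continuity, after noting $f(P)=P$). Once both are secured, the chain-following argument above is immediate. No graph structure is used; this works verbatim in any dynamical system $(X,f)$, matching the generality of the statement.
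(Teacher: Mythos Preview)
Your proof is correct and follows essentially the same route as the paper: both arguments use internal chain transitivity, choose $\eps$ so that the $\eps$-neighbourhood of the periodic orbit $P$ meets $A$ only in $P$, build an $\eps$-chain from a point of $P$ to a hypothetical $v\in A\setminus P$, and then observe that since $f(P)=P$ the chain can never escape $P$, giving a contradiction. One minor remark: your condition (ii) about $f$ carrying $\eps$-balls around $P$ into $\eps$-balls around $P$ is superfluous---the argument only uses that $z_k\in P$ implies $f(z_k)\in P$, which is immediate from invariance of the periodic orbit, and then condition (i) alone forces $z_{k+1}\in P$.
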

\begin{proof}
Assume that the statement does not hold, i.e. there is a periodic orbit $\Orb(p)\subset \alpha(\{x_j\}_{j\leq 0})$ isolated in $\alpha(\{x_j\}_{j\leq 0})$ and a point $q\in \alpha(\{x_j\}_{j\leq 0})\setminus\Orb(p)$. Let $\eps>0$ be such that $\cup_{x\in\Orb(p)}B(x,\eps)$ does not contain any point from $\alpha(\{x_j\}_{j\leq 0})\setminus\Orb(p)$. As $\alpha(\{x_j\}_{j\leq 0})$ is internally chain transitive, there is a chain $z_0,\dots,z_n$ of points from $\alpha(\{x_j\}_{j\leq 0})$ joining $p$ and $q$ with $d(f(z_i),z_{i+1})<\eps$ for $i=0,\dots,n-1$. We have $z_i\in \Orb(p)$, for every $i=0,\dots,n-1$, by the choice of $\eps$. But then $d(f(z_{n-1}),q)<\eps$ implies $q\in\Orb(p)$ which contradicts the assumption. Therefore $ \alpha(\{x_j\}_{j\leq 0})\subseteq \Orb(p)$ and since $ \alpha(\{x_j\}_{j\leq 0})$ is an invariant set, $ \alpha(\{x_j\}_{j\leq 0})= \Orb(p)$.
\end{proof}
\begin{rem}\label{rem:omegaFinite}
Since every $\omega$-limit set $\omega(y)\subset G$ is internally chain transitive, we obtain by the same reasoning as above that if $\omega(y)$ is finite then it is a single periodic orbit.
\end{rem}

\section{Mixing graph maps}\label{sec:mixing}
We will show that any mixing graph map $f\colon G\to G$ has the following properties:
\begin{enumerate}
	\item for every $\omega$-limit set $\omega(y)$ in $G$ and every accessible point $x$ in $G\setminus \mathcal{I}(f)$, there is a backward branch starting at $x$ with the $\alpha$-limit set being equal to $\omega(y)$,
	\item every $\alpha$-limit set of any backward branch in $G$ is an $\omega$-limit set of some point in $G$.
\end{enumerate}

We start with the construction of the backward branch starting at any accessible point whose $\alpha$-limit set equals $\omega(y)$ for a chosen point $y \in G$. Lemma \ref{lem:MixPer} and Lemma \ref{lem:MixInfinite} distinguish two cases depending on the cardinality of $\omega(y)$. In both cases, for infinite and finite $\omega(y)$, the idea of construction is based on the properties of Bowen balls expressed in the following two lemmas from \cite{HKO}.
\begin{lem}\cite[Lemma 10.4]{HKO}\label{lem:10.4}
 Let $f\colon G\to G$ be a mixing graph map. If $0<\eps<\frac{1}{2}\diam G$ and $\delta>0$ then there is an $N=N(\eps,\delta)>0$ such that $B'_n(x,\eps)\subset B(x,\delta)$ for all $x\in G$ and all $n\geq N$.
\end{lem}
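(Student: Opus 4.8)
Although this lemma is quoted from \cite{HKO}, let me indicate how one could prove it directly. The plan is to argue by contradiction. Suppose the conclusion fails for some $\eps$ with $0<\eps<\frac{1}{2}\diam G$ and some $\delta>0$; then there are points $x_k\in G$, integers $n_k\to\infty$, and points $y_k\in B'_{n_k}(x_k,\eps)$ with $d(x_k,y_k)\geq\delta$. Put $C_k:=B'_{n_k}(x_k,\eps)$. The Bowen ball $B_{n_k}(x_k,\eps)=\bigcap_{i=0}^{n_k}f^{-i}\big(\bar B(f^i(x_k),\eps)\big)$ is closed, and a connected component of a closed set is closed, so each $C_k$ is a subcontinuum of $G$; since $\diam C_k\geq d(x_k,y_k)\geq\delta$, it is non-degenerate.

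The heart of the proof is a limiting step. Using compactness of the hyperspace $(K(G),d_H)$, after passing to a subsequence we may assume $C_k\to C$ in the Hausdorff metric. A Hausdorff limit of non-empty connected compact sets is a continuum, and since $A\mapsto\diam A$ is $d_H$-continuous we get $\diam C=\lim_k\diam C_k\geq\delta$, so $C$ is a non-degenerate subcontinuum of $G$. The next claim is that $\diam f^i(C)\leq 2\eps$ for every $i\geq 0$. Indeed, once $n_k\geq i$ we have $C_k\subseteq B_{n_k}(x_k,\eps)\subseteq B_i(x_k,\eps)$, hence for all $u,v\in C_k$
\[
d\big(f^i(u),f^i(v)\big)\leq d\big(f^i(u),f^i(x_k)\big)+d\big(f^i(x_k),f^i(v)\big)\leq 2\eps ,
\]
so $\diam f^i(C_k)\leq 2\eps$; letting $k\to\infty$ and using continuity of $f^i$ and of $\diam$ on $(K(G),d_H)$ gives $\diam f^i(C)\leq 2\eps$.

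It remains to contradict mixing. Every non-degenerate subcontinuum of a graph contains a non-degenerate arc (it is itself a topological graph), so $C$ contains a non-empty open subset $U$ of $G$. Since $2\eps<\diam G$, pick non-empty open sets $V_1,V_2\subset G$ with $\dist(V_1,V_2)>2\eps$; mixing of $f$ yields $N$ with $f^n(U)\cap V_1\neq\emptyset$ and $f^n(U)\cap V_2\neq\emptyset$ for all $n\geq N$, whence $\diam f^n(C)\geq\diam f^n(U)>2\eps$ for such $n$, contradicting the previous paragraph. This proves the lemma, and the $N=N(\eps,\delta)$ obtained is uniform in $x$ because the sequence $x_k$ was arbitrary. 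I expect the middle paragraph to be the main obstacle: the sets $C_k$ may a priori move around the whole graph, and one must distill from them a single non-degenerate continuum whose every forward image has diameter at most $2\eps$; Hausdorff compactness of $K(G)$ together with the elementary fact that non-degenerate subcontinua of graphs contain arcs is exactly what makes this extraction work, after which the mixing property finishes things in one line.
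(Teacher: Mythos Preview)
The paper does not provide its own proof of this lemma; it is cited from \cite[Lemma~10.4]{HKO} without argument, so there is nothing to compare against. Your proof is correct: the contradiction hypothesis produces non-degenerate continua $C_k=B'_{n_k}(x_k,\eps)$, a Hausdorff sublimit $C$ is a non-degenerate subcontinuum of $G$ with $\diam f^i(C)\leq 2\eps$ for every $i\geq 0$, and since such a $C$ has non-empty interior in $G$, mixing forces $\diam f^n(C)>2\eps$ for large $n$, the desired contradiction.
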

\begin{lem}\cite[Lemma 10.5]{HKO}\label{lem:10.5}
 Let $f\colon G\to G$ be a mixing graph map. For every $\eps>0$ there is a constant $\eta=\eta(\eps)$ such that
$$0<\eta\leq \diam f^n(B'_n(x,\eps))$$
for every $n\geq 0$ and $x\in G$.
\end{lem}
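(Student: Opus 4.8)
The plan is to combine a maximality property of the connected Bowen ball $B'_n(x,\eps)$ with Lemma~\ref{lem:10.4}, reducing the statement to a bounded number of iterates of $f$, and then to dispatch those finitely many iterates by a compactness argument in $(K(G),d_H)$. First I would note two easy reductions: since $B'_n(x,\eps')\subseteq B'_n(x,\eps)$ for $\eps'\le\eps$ and no diameter exceeds $\diam G$, it suffices to produce $\eta(\eps)$ for each fixed $\eps$ with $0<\eps<\tfrac12\diam G$; and if $B'_n(x,\eps)=G$ then $f^n(B'_n(x,\eps))=G$ because a mixing graph map is surjective, so we may assume $B'_n(x,\eps)\subsetneq G$. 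The key observation I would extract is that $B'_n(x,\eps)$, being a nonempty proper closed subset of the connected graph $G$, is not open, hence has a non-interior point $y$, and that such a $y$ must satisfy $d(f^i(x),f^i(y))=\eps$ for some $i\in\{0,\dots,n\}$. Indeed, if all these distances were $<\eps$ then, using continuity of $f^0,\dots,f^n$ and local connectedness of $G$, one finds a connected open neighbourhood $V\ni y$ with $V\subseteq B_n(x,\eps)$; since $V$ and $B'_n(x,\eps)$ are connected and share the point $y$, the union $V\cup B'_n(x,\eps)$ is connected, lies in $B_n(x,\eps)$, and contains $x$, hence lies in the component $B'_n(x,\eps)$, so $y\in\Int B'_n(x,\eps)$ --- a contradiction.

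Next I would transport this to the last iterate. Put $E:=f^i(B'_n(x,\eps))$ and $m:=n-i$. Then $E$ is connected and contains $f^i(x)$ and $f^i(y)$ with $d(f^i(x),f^i(y))=\eps$, so $\diam E\ge\eps$; on the other hand $f^j(E)=f^{i+j}(B'_n(x,\eps))\subseteq\bar B(f^{i+j}(x),\eps)$ for all $0\le j\le m$, so $E$ is a connected subset of $B_m(f^i(x),\eps)$ containing $f^i(x)$, i.e.\ $E\subseteq B'_m(f^i(x),\eps)$. Now I would invoke Lemma~\ref{lem:10.4} with this $\eps$ and $\delta=\eps/3$: were $m\ge N(\eps,\eps/3)$, we would get $E\subseteq B(f^i(x),\eps/3)$ and hence $\diam E\le\tfrac23\eps<\eps$, a contradiction. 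Therefore $m<M:=N(\eps,\eps/3)$, a bound depending only on $\eps$ and $f$.

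It then remains to prove a uniform non-collapsing estimate for the finitely many maps $f^0,\dots,f^{M-1}$: there is $\eta=\eta(\eps)>0$ such that $\diam f^m(E')\ge\eta$ for every $m$ with $0\le m<M$ and every connected $E'\subseteq G$ with $\diam E'\ge\eps$. Applying this with $E'=E$ and $m=n-i$ gives $\diam f^n(B'_n(x,\eps))=\diam f^m(E)\ge\eta$, and after replacing $\eta$ by $\min\{\eta,\eps,\diam G\}$ (to absorb the reductions above) the lemma follows. I would prove the estimate by contradiction: otherwise, fixing some $m<M$, there are connected sets $E_k$ with $\diam E_k\ge\eps$ but $\diam f^m(E_k)\to0$; by compactness of $(K(G),d_H)$ a subsequence of $\overline{E_k}$ converges to a subcontinuum $E_\infty$ with $\diam E_\infty\ge\eps$, and since $A\mapsto f^m(A)$ is continuous on $K(G)$ we get $\diam f^m(E_\infty)=0$, so $f^m$ is constant on $E_\infty$. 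As $E_\infty$ is a nondegenerate subcontinuum of a graph it contains an arc $I$, so $f^n(I)$ is a single point for every $n\ge m$ and $\diam f^n(I)\to0$; but a mixing graph map is sensitive --- for disjoint balls $B_1,B_2$ with $\dist(B_1,B_2)=\delta>0$, mixing provides $N_0$ with $f^n(\Int I)\cap B_1\ne\emptyset\ne f^n(\Int I)\cap B_2$ for all $n\ge N_0$, forcing $\diam f^n(I)\ge\delta$ --- a contradiction.

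I expect the main obstacle to be the maximality step in the first paragraph: one has to argue carefully that a non-interior point of the \emph{connected component} $B'_n(x,\eps)$ of the Bowen ball genuinely saturates one of the defining inequalities $d(f^i(x),f^i(\cdot))\le\eps$, and this uses both local connectedness of $G$ and the fact that $B'_n(x,\eps)$ is a whole component, so that a small connected neighbourhood glued to it along a common point cannot escape into a different component. Everything afterward should be comparatively routine: Lemma~\ref{lem:10.4} does the work of bounding the number of remaining iterates, and the closing compactness argument only needs that $K(G)$ is compact, that $f^m$ acts continuously on it, and that mixing graph maps are sensitive.
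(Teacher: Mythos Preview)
The paper does not actually prove this lemma: it is quoted as \cite[Lemma 10.5]{HKO} and used as a black box, so there is no ``paper's own proof'' to compare against. That said, your argument is correct. The boundary-point step is exactly right---a non-interior point of the component $B'_n(x,\eps)$ must saturate one of the inequalities $d(f^i(x),f^i(\cdot))\le\eps$, and your use of local connectedness to glue a small connected neighbourhood onto the component is the clean way to see it. The reduction via Lemma~\ref{lem:10.4} to a bounded gap $m=n-i<M$ is neat and makes the two lemmas a natural pair. In the compactness step you implicitly use that the Hausdorff limit of subcontinua is a subcontinuum and that a nondegenerate arc in a graph has nonempty interior in $G$ (since $Br(G)$ is finite); both are standard and not genuine gaps. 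Your proof is self-contained modulo Lemma~\ref{lem:10.4} and would serve perfectly well as a replacement for the citation.
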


\begin{lem}\label{lem:MixPer}
 Let $f\colon G\to G$ be a mixing graph map and $y \in G$ such that $\omega(y)$ is finite. There exists an open connected set $U \subset G$ such that, for every $x_0 \in U$ there is a backward branch $\{\tx_j\}_{j\leq 0}$ with $\alpha(\{\tx_j\}_{j\leq 0})=\omega(y)$.
\end{lem}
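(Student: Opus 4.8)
The plan is to produce, for a finite $\omega$-limit set $\omega(y)$, a single backward branch whose $\alpha$-limit set is exactly that periodic orbit, and to do it uniformly for all starting points in some open connected $U$. By Remark~\ref{rem:omegaFinite}, $\omega(y)$ is a single periodic orbit $\Orb(p)$ of some period $q$; so it suffices to build a backward branch whose negative iterates accumulate precisely on $\Orb(p)$. First I would fix a small $\eps>0$ (much smaller than half the distance between distinct points of $\Orb(p)$ and much smaller than $\frac12\diam G$) and let $U$ be a small open connected neighborhood of $p$ contained in $B(p,\eps)$; this is the set the statement asks for. The strategy for the branch is a ``shadowing by Bowen balls'' argument using Lemma~\ref{lem:10.4} and Lemma~\ref{lem:10.5}, which is exactly why the authors isolated those two lemmas.

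The core construction is inductive. Having reached a point $x_{-m}$ which is $\eps$-close to some point of $\Orb(p)$, I would look at a long orbit segment of length roughly one full revolution around the cycle (or several revolutions), using mixing to find a point whose forward orbit under $f$ for the appropriate number of steps lands at $x_{-m}$ while staying controlled. More precisely: by Lemma~\ref{lem:10.5} the connected component $B'_n(z,\eps)$ always has $f^n$-image of diameter at least $\eta(\eps)$, so for $n$ large the set $f^n(B'_n(z,\eps))$ is large enough to meet any prescribed small target, while by Lemma~\ref{lem:10.4}, for $n\geq N(\eps,\delta)$ the component $B'_n(z,\eps)$ itself is contained in $B(z,\delta)$. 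Combining these, I would pick $n$ a large multiple of $q$ and a base point $z$ on $\Orb(p)$ (say $z=p$), so that $f^n(B'_n(p,\eps))$ is a set of diameter $\geq\eta$ sitting near $p$ (since the orbit segment of $p$ of length $n$ shadows itself), hence it meets the already-constructed neighborhood of $x_{-m}$; pulling back gives a point $w\in B'_n(p,\eps)\subset B(p,\delta)$ with $f^n(w)=x_{-m}$. Setting $x_{-m-1}=f^{n-1}(w), x_{-m-2}=f^{n-2}(w),\dots, x_{-m-n}=w$ extends the branch by $n$ steps. Because $w\in B'_n(p,\eps)$, the whole added segment $f^i(w)$ stays within $\eps$ of $f^i(p)\in\Orb(p)$ for $i=0,\dots,n$, so every newly added term is $\eps$-close to $\Orb(p)$; and by choosing $\delta=\delta_k\to 0$ and $\eps=\eps_k\to 0$ along the induction, the added segments get closer and closer to $\Orb(p)$, which will force $\alpha(\{x_j\}_{j\leq 0})\subseteq\Orb(p)$. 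To start the induction I use mixing / Lemma~\ref{lem:10.5} once to connect the arbitrary $x_0\in U$ backwards to a point near $p$.

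Two points need care, and I expect the second to be the main obstacle. The easier one: I must ensure the branch actually accumulates on \emph{all} of $\Orb(p)$, not just on $p$ — this is handled by taking $n$ a multiple of $q$ but inserting, infinitely often, the full intermediate segment $f^{n-1}(w),\dots,f(w),w$, which visits $\eps_k$-neighborhoods of every point of $\Orb(p)$; since $\eps_k\to0$, each point of $\Orb(p)$ is a genuine limit of the backward branch, giving $\Orb(p)\subseteq\alpha(\{x_j\}_{j\leq 0})$. The harder one: I must guarantee $\alpha(\{x_j\}_{j\leq 0})$ contains \emph{nothing else}. The subtle issue is that between consecutive ``anchor'' times the branch travels through a long orbit segment of length $n_k$, and although each such segment lies within $\eps_k$ of $\Orb(p)$ by the Bowen-ball containment, I need the Bowen ball $B'_{n_k}(p,\eps_k)$ to be genuinely small in \emph{every} coordinate, i.e. $\diam f^i(B'_{n_k}(p,\eps_k))$ small for all $0\le i\le n_k$ — but Lemma~\ref{lem:10.4} only directly controls the $0$-th coordinate of $B'_n(z,\eps)$. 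I would get around this by applying Lemma~\ref{lem:10.4} at each intermediate base point: for any $0\le i\le n_k$, the set $f^i(B'_{n_k}(p,\eps_k))$ is contained in $B'_{n_k-i}(f^i(p),\eps_k)\subset B(f^i(p),\delta_k)$ provided $n_k-i\ge N(\eps_k,\delta_k)$, and the finitely many ``tail'' coordinates $i>n_k-N(\eps_k,\delta_k)$ are harmlessly close to $\Orb(p)$ since they are within $\eps_k$ of it anyway and there are boundedly many of them. Choosing $n_k$ growing fast enough relative to $N(\eps_k,\delta_k)$ makes the density of ``bad'' coordinates negligible, so every convergent subsequence of $\{x_j\}$ has limit within $\delta_k$ of $\Orb(p)$ for all $k$, hence in $\Orb(p)$. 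Putting the two halves together yields $\alpha(\{x_j\}_{j\leq 0})=\Orb(p)=\omega(y)$, as required.
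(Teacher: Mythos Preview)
The core gap is in your covering step. You assert that since $f^n(B'_n(p,\eps))$ is a connected set containing $f^n(p)=p$ and of diameter at least $\eta$, it must contain the previously constructed target $x_{-m}\in B(p,\delta_{k-1})$. This does not follow: a connected set of diameter $\ge\eta$ through $p$ is only forced to extend along \emph{one} of the $L=\deg(p)$ sides of $p$, and $x_{-m}$ may well lie on a different side. Even when $L=2$ the image can go entirely to the left of $p$ while $x_{-m}$ sits to the right. The paper's proof is organised around precisely this obstruction: it introduces $L$ nested arc-families $S_i^{(1)},\ldots,S_i^{(L)}$ emanating from $p$, observes that $f^n(S_{k+1}^{(l)})$ must cover \emph{some} $S_k^{(m)}$ but has no control over which $m$, encodes the admissible transitions as a nested sequence of cylinder sets in $\{1,\ldots,L\}^{\mathbb{N}_0}$, and uses compactness to extract a consistent infinite sequence of sides. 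Only after this symbolic choice is made is $U$ defined, as the interior of the first arc in the sequence --- so $U$ cannot be fixed at the outset as a neighbourhood of $p$, as you do.

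A secondary difficulty compounds this. To obtain $\alpha(\{\tx_j\})\subseteq\Orb(p)$ you let $\eps_k\to 0$, but then $\eta(\eps_k)$ carries no lower bound and the covering inequality you need degrades further. The paper sidesteps this tension by keeping $\eps$ (hence $\eta$) fixed and instead controlling the intermediate iterates with a first-escape-time device: $n_{k+1}^{(l)}$ is chosen \emph{minimal} with the covering property, which forces $f^n(S_{k+1}^{(l)})\subset\bigcup_{m}S_k^{(m)}$ for all $0\le n<n_{k+1}^{(l)}$, so the entire newly added segment of the branch lies in the star of diameter $\eta/2^{k}$ about $p$. Your Bowen-ball estimate $f^i(B'_{n_k}(p,\eps_k))\subset B'_{n_k-i}(f^i(p),\eps_k)$ is correct, but it confines the segment only up to $\eps_k$; making that small enough forces $\eps_k\to 0$ and reintroduces the first problem.
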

\begin{proof}
	By Remark \ref{rem:omegaFinite}, every finite $\omega(y)$ is an orbit of some periodic point $p\in G$. First assume that $p \in G$ is a fixed point, i.e. $f^i(p)=p$ for all $i\geq 0$, and denote $\deg(p)=L$. Choose $\eps>0$ and let $\eta>0$ be the constant from Lemma \ref{lem:10.5} such that for every $n \in \mathbb{N}$ and $x \in G$ we have: 
	$$0<\eta\leq\diam f^n(B'_n(x,\eps)).$$
	Decreasing $\eta$ if necessary, we can choose $\{S^{(1)}_i\}_{i\geq 0},\dots \{ S^{(L)}_i \}_{i\geq 0}$ where each $\{ S^{(l)}_i \}_{i\geq 0}$ is a nested sequence of closed arcs in $G$ such that $\diam S^{(l)}_0 = \frac{\eta}{2}$ and $\diam S^{(l)}_{i+1} = \frac12\diam S^{(l)}_i $ for every $1\leq l\leq L$ and every $i\geq 0$, and the fixed point $p$ is the unique element in the intersection $\bigcap_{l=1}^LS^{(l)}_i$, for every $i\geq 0$.
		
	Choose $x^{(1)}_0,\dots, x^{(L)}_0 \in G$ such that $x^{(l)}_0 \in S^{(l)}_0$ and $d(x^{(l)}_0,p)=\frac{\eta}{4}$ for $1\leq l\leq L$. By Lemma \ref{lem:10.4} there exists $N_0 = N(\eps,\frac{\eta}{8})>0$ such that for $n>N_0$ and every $1\leq l \leq L$ we have: 
	$$B'_n(x^{(l)}_0,\eps)\subset B(x^{(l)}_0,\frac{\eta}{8})\subset S^{(l)}_0.$$
	Pick $x^{(l)}_1 \in S^{(l)}_1$, $1\leq l \leq L$ such that  $d(x^{(l)}_1,p)=\frac{\eta}{2^3}$. Again by Lemma \ref{lem:10.4} we get $N_1>0$ such that for every $n>N_1$ we have: 
	$$B'_n(x^{(l)}_1,\eps)\subset B(x_1^{(l)},\frac{\eta}{2^4})\subset S_1^{(l)}.$$
	By Lemma \ref{lem:10.5} we know that $\eta \leq \diam f^{n}(B'_n(x^{(l)}_1,\eps))$ for $1\leq l \leq L$, which implies $\eta\leq \diam f^n(S_1^{(l)})$ as well for any $1\leq l \leq L$ and $n>N_1$. As $S_0^{(l)}\cap S_1^{(l)}=\{p\}$, we have that 
	for every $1\leq l \leq L$  and any $n>N_1$ there exists some $m \in \{1,\dots, L\}$ such that $f^n(S_1^{(l)})\supset S_0^{(m)}$. For any $l \in \{1,\dots,L\}$ we denote:
	$$
	n_1^{(l)} = \min\{ n\leq N_1+1: \text{ there exists }m \in \{1,\dots,L\} \text{ such that } f^n(S_1^{(l)})\supset S_0^{(m)} \}.
	$$ 
	Note that for $n<n^{(l)}_1$ we have $f^n\left(S^{(l)}_1\right) \subset \bigcup_{m=1}^LS^{(m)}_0$, so in particular the diameter of $f^n\left(S^{(l)}_1\right)$ is less than $\eta$.
	
	Now for every $1\leq l \leq L$ we are going to construct a sequence $\{n_i^{(l)}\}_{i>0}$, whose element $n_i^{(l)}$ indicates the first iteration of $f$ such that $f^{n_i^{(l)}}(S_i^{(l)}) \supset S_{i-1}^{(m)}$ for some $m \in \{1,\dots,L\}$.
	Fix  $l \in \{1,\dots, L\}$. 
	Let $N_{k+1}>0$ be the constant from Lemma \ref{lem:10.4} such that for $n>N_{k+1}$ we have: 
	$$B'_n(x_{k+1}^{(l)},\eps)\subset B(x_{k+1}^{(l)},\frac{\eta}{2^{k+3}})\subset S_{k+1}^{(l)}.$$ 
	 By Lemma \ref{lem:10.5} we have that $\eta \leq \diam f^{n}(B'_n(x^{(l)}_{k+1},\eps))$ for $1\leq l \leq L$, which implies $\eta\leq \diam f^n(S_{k+1}^{(l)})$ as well for any $1\leq l \leq L$ and $n>N_{k+1}$. As $S_{k+1}^{(l)}\cap S_k^{(l)}=\{p\}$, we have that for  $n>N_{k+1}$ each $f^n(S_{k+1}^{(l)})$ covers $S_0^{(m)}$ for some $1\leq m \leq L$. In other words, for each $n>N_{k+1}$ there exists some $m \in \{1,\dots, L\}$ such that $f^n(S_{k+1}^{(l)})\supset S_k^{(m)}$. 
	Denote:
	$$
	n_{k+1}^{(l)} = \min\{ n\leq N_{k+1}+1: \text{ there exists }m \in \{1,\dots,L\} \text{ such that } f^n(S_{k+1}^{(l)})\supset S_k^{(m)} \}.
	$$ 
	Note that for $n<n^{(l)}_{k+1}$ we have $f^n\left(S^{(l)}_{k+1}\right) \subset \bigcup_{m=1}^LS^{(m)}_k$, so in particular the diameter of $f^n\left(S^{(l)}_{k+1}\right)$ is less than $\eta/2^k$.
	
	We are going to construct a nested sequence of closed sets $\{Z_i\}_{i\geq 0}$ such that $Z_i\subset \{1,\ldots, L\}^{\mathbb{N}_0}$ for each $i\geq 0$ as follows. A point $z = \{z_n\}_{n\geq 0}$ belongs to $Z_0$ if the following holds:
	$$
	z_1=l, z_0=m \text{ for }m,l \in \{1,\dots,L\} \text{ such that }S_0^{(m)} \subset f^{n_1^{(l)}}(S_1^{(l)}).
	$$
	Below we use the notation $z_{[0,K]}$ to denote a finite sequence of symbols $z_0z_1\dots z_K$ over the given alphabet $\{1,\ldots,L\}$. For $i>0$ a point $w$ belongs to $Z_i$ if there exists some $z \in Z_{i-1}$ such that $w_{[0,i-1]} = z_{[0,i-1]}$ and the following holds:
	$$
	w_i = l, w_{i-1}=m \text{ for }m,l \in \{1,\dots, L\}\text{ such that }  S_{i-1}^{(m)}\subset f^{n_{i}^{(l)}}(S_{i}^{(l)}).
	$$
	
	The intersection $Z = \bigcap_{i\geq 0}Z_i$ is non-empty, since $\{Z_i\}_{i\geq 0}$ is nested sequence of compact sets, so fix some $z \in Z$. Depending on the first symbol of $z$ put $U = \Int S_0^{(l)}$ provided that $z_0 = l$, $l \in \{1,\dots, L\}$. For every $i\geq 1$ denote :
	$$
	k_i = n_i^{(l)} \text{ for such an }l \in \{1,\dots, L\} \text{ that }z_i = l
	$$
	and
	$$	
	\tilde{S}_i = S^{(l)}_i \text{ for such an }l \in \{1,\dots, L\}\text{ that } z_i=l.
	$$
	
	Then $f^{k_i}(\tilde{S}_i)\supset \tilde{S}_{i-1}$, for every $i>1$.  Let $\tx_0$ be an arbitrary point from $U$. The backward branch $\{ \tx_j \}_{j\leq 0}$ starting at $\tx_0$ is defined as follows. For $j=-\sum_{i=1}^mk_i$ for consecutive $m=1,2,3,\ldots$, we pick
	$$
		\tx_j \in \tilde{S}_m \text{ such that }f^{k_{m}}(\tx_j) = \tx_{j+k_{m}} 
	$$
	and for all other $j$ we define
	$$
		\tx_{j} =f(\tx_{j-1}) \text{ for } -\sum_{s=1}^{m}k_s < j < -\sum_{s=1}^{m-1}k_s, m> 1, \text{ or } -k_1<j<0.
	$$
	The above conditions guarantee that $\tx_j$ is well defined for $j\leq 0$ and $\{\tx_j\}_{j\leq 0}$ is a~backward branch of some point from $U$. 
By the construction for every $m>0$ we will find some $t>0$ such that $\{\tx_j\}_{j\leq -t} \subset B(p,\frac{\eta}{2^{m+1}})$, so altogether $\{p\} =\alpha(\{\tx_j\}_{j\leq 0})$.

Now assume that $p$ is a point of period $K>1$. We use the above result for map $f^K$ for which $p$ is a fixed point to get the backward branch $\{\tx_j\}_{j\leq 0}$ with $\tx_0 \in U$. Hence $\{\tilde{y}_j\}_{j\leq 0}$ defined as follows:
\begin{align*}
	\tilde{y}_{Kj} & = \tx_j, j\leq 0,\\
	\tilde{y}_s &= f(\tilde{y}_{s-1}) \text{ for } (K+1)j <s< Kj, j\leq 0 \\
\end{align*}
is the backward branch of $\tilde{y}_0 = \tx_0 \in U$ for map $f$. By continuity, and the fact that $\{p\} =\alpha_{f^K}(\{\tx_j\}_{j\leq 0})$
we obtain that $\Orb(p)=\alpha_{f}(\{\tilde{y}_j\}_{j\leq 0})$.
\end{proof}
\begin{lem}\label{lem:MixInfinite}
	Let $f\colon G\to G$ be a mixing graph map and $y \in G$ such that $\omega(y)$ is infinite. There exists an open connected set $U \subset G$ such that, for every $\tx_0 \in U$, there is a backward branch $\{\tx_j\}_{j\leq 0}$ with $\alpha(\{\tx_j\}_{j\leq 0})=\omega(y)$.
\end{lem}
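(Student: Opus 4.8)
The plan is to transpose the construction in the proof of Lemma~\ref{lem:MixPer} from a single periodic point to a sequence of reference points strung along a forward orbit that is dense in $\omega(y)$. Since $\omega(y)$ is an $\omega$-limit set, $\dist(f^n(y),\omega(y))\to 0$ and $\omega(y)\subseteq\overline{\{f^n(y):n\ge N\}}$ for every $N$. I would fix $\delta_m\downarrow 0$ and $\eps_m\downarrow 0$ with $\delta_m$ small compared with $\eta_m:=\eta(\eps_m)$ (Lemma~\ref{lem:10.5}), choose $N_1<N_2<\cdots$ with $f^n(y)\in B(\omega(y),\delta_m)$ for all $n\ge N_m$, and set $p_m:=f^{N_m}(y)$. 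The forward orbit of $p_m$ is dense in $\omega(y)$ and $p_{m-1}$ lies close to $\omega(y)$, so that orbit enters any prescribed neighbourhood of $p_{m-1}$ infinitely often; this lets me choose, for each $m$, an integer $k_m$ so large that the segment $\{f^{N_m+i}(y):0\le i\le k_m\}$ is $\delta_m$-dense in $\omega(y)$, Lemma~\ref{lem:10.4} applies on the scales used below, and $f^{k_m}(p_m)$ is within an arbitrarily small prescribed distance of $p_{m-1}$. Write $\sigma_m:=k_1+\dots+k_m$.

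Fix $m$. For each local direction $\nu$ at $p_m$ (finitely many, since the maximal degree in $G$ is finite) let $S_m^{(\nu)}$ be a short arc issuing from $p_m$ along $\nu$, and choose $x_m^{(\nu)}$ in its relative interior so close to $p_m$ that $f^i(x_m^{(\nu)})$ stays $\delta_m$-close to $f^i(p_m)$ for $0\le i\le k_m$; by Lemma~\ref{lem:10.4}, once $k_m$ is large, $B'_{k_m}(x_m^{(\nu)},\eps_m)\subset S_m^{(\nu)}$. Two facts follow. First, for $0\le i\le k_m$ one has $f^i(B'_{k_m}(x_m^{(\nu)},\eps_m))\subset\bar B(f^i(x_m^{(\nu)}),\eps_m)$, which sits in an $(\eps_m+2\delta_m)$-neighbourhood of $\omega(y)$, since a Bowen ball stays $\eps_m$-close to its centre's orbit, which shadows the orbit of $y$. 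Second, by Lemma~\ref{lem:10.5}, $f^{k_m}(B'_{k_m}(x_m^{(\nu)},\eps_m))$ is a compact connected set of diameter at least $\eta_m$ containing the point $f^{k_m}(x_m^{(\nu)})$, which lies within $2\delta_m$ of $p_{m-1}$; hence it contains an arc issuing from that point of diameter at least $\eta_m/2$, and — with the arcs $S_{m-1}^{(\lambda)}$ chosen short and suitably positioned relative to $\eta_m$ — it contains $S_{m-1}^{(\lambda)}$ for some direction $\lambda$ at $p_{m-1}$. Thus $f^{k_m}(S_m^{(\nu)})\supseteq S_{m-1}^{(\lambda)}$, the covering actually occurring already inside $f^{k_m}(B'_{k_m}(x_m^{(\nu)},\eps_m))$.

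The relations $f^{k_m}(S_m^{(\nu)})\supseteq S_{m-1}^{(\lambda)}$ set up, exactly as with the sets $Z_i$ in Lemma~\ref{lem:MixPer}, a finitely branching tree whose level-$m$ vertices are the directions at $p_m$ and every vertex of which has a descendant; a compactness argument produces an infinite branch $(\nu_m)_{m\ge 0}$, and I put $U:=\Int S_0^{(\nu_0)}$. Given $\tx_0\in U$, I would construct $\{\tx_j\}_{j\le 0}$ block by block: having placed $\tx_{-\sigma_{m-1}}\in S_{m-1}^{(\nu_{m-1})}$, the inclusion $S_{m-1}^{(\nu_{m-1})}\subset f^{k_m}(B'_{k_m}(x_m^{(\nu_m)},\eps_m))$ yields $\tx_{-\sigma_m}\in B'_{k_m}(x_m^{(\nu_m)},\eps_m)$ with $f^{k_m}(\tx_{-\sigma_m})=\tx_{-\sigma_{m-1}}$, and the intervening $\tx_j$ are the forward iterates of $\tx_{-\sigma_m}$. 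Then for $-\sigma_m\le j\le-\sigma_{m-1}$ we have $\tx_j\in f^i(B'_{k_m}(x_m^{(\nu_m)},\eps_m))$ for the appropriate $i\le k_m$, so every $\tx_j$ with $j\le-\sigma_m$ lies in the $(\eps_m+2\delta_m)$-neighbourhood of $\omega(y)$; since this quantity tends to $0$, $\alpha(\{\tx_j\}_{j\le0})\subseteq\omega(y)$. Conversely, inside the $m$-th block the $\tx_j$ form a set that is $(\eps_m+2\delta_m)$-close to the $\delta_m$-dense set $\{f^{N_m+i}(y):0\le i\le k_m\}$, so these blocks accumulate on all of $\omega(y)$, giving the reverse inclusion.

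The part I expect to be genuinely delicate is the bookkeeping: one must order the choices of $\delta_m,\eps_m,N_m,k_m$, the base points $x_m^{(\nu)}$, and the lengths and positions of the arcs $S_m^{(\nu)}$ so that all the competing demands hold simultaneously — $k_m$ large enough for density, for Lemma~\ref{lem:10.4}, and for a close return of the orbit of $p_m$ to $p_{m-1}$; $x_m^{(\nu)}$ close enough to $p_m$ that the first $k_m$ iterates drift off the orbit of $y$ by less than $\delta_m$; $\diam S_m^{(\nu)}$ small enough to be swallowed by the arc that the next level is guaranteed to cover; and $S_{m-1}^{(\lambda)}$ short and placed so that, whichever way the covering arc at $p_{m-1}$ runs, it still contains $S_{m-1}^{(\lambda)}$. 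The only feature absent in the periodic case is that $f^{k_m}(x_m^{(\nu)})$ only lands \emph{near} $p_{m-1}$ rather than on a fixed point, which is why the standard direction arcs $S_{m-1}^{(\lambda)}$ must be kept a small positive distance from $p_{m-1}$ and handled with a little extra care about orientation.
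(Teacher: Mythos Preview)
Your plan is structurally the same as the paper's: both arguments use Lemmas~\ref{lem:10.4} and~\ref{lem:10.5} to produce, at each level, a finite family of small Bowen balls whose images are guaranteed to swallow at least one Bowen ball from the previous level, then extract an infinite branch from the resulting finitely-branching tree by compactness, exactly as in Lemma~\ref{lem:MixPer}. The two inclusions for $\alpha(\{\tx_j\})$ are verified in the same way.

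The difference is in how the Bowen-ball centres are anchored, and here the paper's choice is cleaner. You use moving reference points $p_m=f^{N_m}(y)$, place centres $x_m^{(\nu)}$ in each local direction at $p_m$, and rely on uniform continuity of $f,\ldots,f^{k_m}$ to make the orbit of $x_m^{(\nu)}$ shadow that of $p_m$. The paper instead fixes once and for all an accumulation point $a$ of $\omega(y)$ lying on a free arc $J=[a,b]$, and at each level $k$ picks three points $l_k<m_k<r_k$ of $\omega(y)\cap J$ near $a$; the centres $x_k,\hat x_k$ are then taken \emph{directly from} $\Orb(y)$, close to $l_k$ and $r_k$. This buys two things. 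First, since $x_k\in\Orb(y)$, every point of $B'_{n_k}(x_k,\eps_k)$ automatically has its first $n_k$ iterates $\eps_k$-close to $\Orb(y)$; no separate continuity estimate is needed, so there is no constraint forcing the centre to sit at a prescribed distance from a reference point. Second, working on one free arc reduces the branching to a clean left/right dichotomy relative to $m_{k-1}$: the image of a level-$k$ Bowen ball has diameter $\ge\eta_k$, contains a point close to $m_{k-1}$, and the two level-$(k-1)$ targets lie on opposite sides of $m_{k-1}$ at total distance $<\eta_k/2$, so one of them is covered by simple arithmetic on the arc.

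The tension you flag at the end is therefore not cosmetic. The shadowing forces $d(x_m^{(\nu)},p_m)<\tau_m$ with $\tau_m$ determined by $k_m$ and $\delta_m$, while the covering from level $m+1$ needs the target $B'_{k_m}(x_m^{(\nu)},\eps_m)$ to sit at distance strictly larger than $\gamma_m$ (its own radius) from $p_m$; both $\tau_m$ and $\gamma_m$ shrink as $k_m$ grows, and you must still argue that $\tau_m>\gamma_m$ with room to spare and that the order of choices is non-circular. This can be arranged, but the paper's device of drawing centres from $\Orb(y)$ on a fixed arc near an accumulation point of $\omega(y)$ eliminates the issue altogether.
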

\begin{proof}
Let $a\in\omega(y)$ be an accumulation point of the infinite set $\omega(y)$ and $J=[a,b]$ be an arc such that $a$ is an accumulation point of the set $\Lambda = \omega(y)\cap[a,b]$. We may assume that $J\setminus \{a,b\}$ is an open free arc, that means $(J\setminus \{a,b\})\cap Br(G)=\emptyset$.
Let  $\{\eps_n\}_{n\geq0}$ be a sequence of positive numbers such that $\eps_0 = \frac12 \diam J$ and  $\eps_{n+1}<\frac12\eps_n$ for every $n\geq0$. Now let $\{\eta_n\}_{n \in \mathbb{N}}$ be the sequence of constants from Lemma \ref{lem:10.5}, such that for every $n,k \in \mathbb{N}$ and $x \in G$ we have:
$$
0<\eta_n\leq \diam f^k(B'_k(x,\eps_n)).
$$
Choose some $l_0,m_0,r_0 \in \Lambda$ such that $d(l_0,r_0)<\frac{\eta_1}{4}$ and:
$$
a<l_0<m_0<r_0.
$$
As $l_0, r_0 \in \omega(y)$ there exist $x_0,\hx_0 \in \Orb(y)$ such that $d(x_0,l_0)<\frac{\eta_1}{8}$ and $d(\hx_0,r_0)<\frac{\eta_1}{8}$. Define $\gamma_0 = \frac18\min\{d(l_0,x_0), d(r_0,\hx_0),\eta_2\}$ and note that in particular $\gamma_0<\frac{\eta_1}{64}$.  Let $N_0 = N(\eps_0,\gamma_0)>0$  be the constant from Lemma \ref{lem:10.4} and take $n_0,\hn_0~>~N_0$ such that $B'_{n_0}(x_0,\eps_0)\subset B(x_0,\gamma_0)$ and $B'_{\hn_0}(\hx_0,\eps_0)\subset B(\hx_0,\gamma_0)$.  

Choose $l_1, m_1, r_1 \in \Lambda$ such that $\d(l_1,r_1)< \frac{\eta_2}{4}$ and:
$$
a<l_1<m_1<r_1<l_0.
$$
Again there exist $x_1,\hx_1 \in \Orb(y)$ with $d(l_1,x_1)<\gamma_0$, and $d(\hx_1,r_1)<\gamma_0$. Denote $\gamma_1 = \frac18\min\{ d(l_1,x_1), d(r_1,\hx_1),\eta_3  \}$ and let $N_1 = N(\eps_1,\gamma_1)>0$ be the constant from Lemma \ref{lem:10.4}. Take $n_1,\hn_1>N_1$ such that:
\begin{align*}
	B'_{n_1}(x_1,\eps_1)&\subset B(x_1,\gamma_1) \text{ and } d(f^{n_1}(x_1),m_0)<\gamma_1 \text{ and }\bigcup_{i=0}^{n_1-1}B(f^i(x_1),\eps_1)\supset \omega(y), \\
	B'_{\hn_1}(\hx_1,\eps_1)&\subset B(\hx_1,\gamma_1) \text{ and } d(f^{\hn_1}(\hx_1),m_0)<\gamma_1 \text{ and }\bigcup_{i=0}^{n_1-1}B(f^i(\hx_1),\eps_1)\supset \omega(y). 
\end{align*}
By  Lemma \ref{lem:10.5} we have:
\begin{align*}
	\eta_1&\leq \diam f^{n_1}(B'_{n_1}(x_1,\eps_1)),\\
	\eta_1&\leq \diam f^{\hn_1}(B'_{\hn_1}(\hx_1,\eps_1)). 
\end{align*}
Note that the following inequalities hold:
\begin{equation}\label{est1}
	\begin{split}
d(f^{n_1}(x_1),m_0)+d(x_0,m_0)+\diam B'_{n_0}(x_0,\eps_0)&<\frac{\eta_1}{64}+\frac{\eta_1}{4}+\frac{\eta_1}{8}<\frac{\eta_1}{2},\\
d(f^{n_1}(x_1),m_0)+d(\hx_0,m_0)+\diam B'_{\hn_0}(\hx_0,\eps_0)&<\frac{\eta_1}{64}+\frac{\eta_1}{4}+\frac{\eta_1}{8}<\frac{\eta_1}{2},
\end{split}
\end{equation}
hence at least one of the following inclusions must hold:
$$
f^{n_1}(B'_{n_1}(x_1,\eps_1))\supset B'_{n_0}(x_0,\eps_0) \text{ or }f^{n_1}(B'_{n_1}(x_1,\eps_1))\supset B'_{\hn_0}(\hx_0,\eps_0).
$$
Analogously, we also have:
\begin{equation}\label{est2}
	\begin{split}
	d(f^{\hn_1}(\hx_1),m_0)+d(x_0,m_0)+\diam B'_{n_0}(x_0,\eps_0)&<\frac{\eta_1}{64}+\frac{\eta_1}{4}+\frac{\eta_1}{8}<\frac{\eta_1}{2},\\
	d(f^{\hn_1}(\hx_1),m_0)+d(\hx_0,m_0)+\diam B'_{\hn_0}(\hx_0,\eps_0)&<\frac{\eta_1}{64}+\frac{\eta_1}{4}+\frac{\eta_1}{8}<\frac{\eta_1}{2}.
	\end{split}
\end{equation}
and therefore, as before:
$$
f^{\hn_1}(B'_{\hn_1}(\hx_1,\eps_1))\supset B'_{n_0}(x_0,\eps_0) \text{ or }f^{\hn_1}(B'_{\hn_1}(\hx_1,\eps_1))\supset B'_{\hn_0}(\hx_0,\eps_0).
$$
Now we are going to construct sequences of Bowen balls $\{B_{n_k}(x_k,\eps_k)\}_{k\geq 0}$ and  $\{B_{\hn_k}(\hx_k,\eps_k)\}_{k\geq 0}$ with the properties as follows for every $k\geq 1$:
	\begin{enumerate}
		\item there are some properly chosen points $l_k,m_k,r_k \in \Lambda$ with $d(l_k,r_k)<\frac{\eta_{k+1}}{4}$ and $a<l_k<m_k<r_k<l_{k-1}$ for which we may find $x_k, \hx_k \in \Orb(y)$ such that $x_k$ is within $\gamma_{k-1}$ distance from $l_k$ and $\hx_k$ is within $\gamma_{k-1}$ distance from $r_k$, where $\gamma_{k-1}=\frac18\min\{d(x_{k-1},l_{k-1}), d(\hx_{k-1},r_{k-1})\}$.
		\item $n_k, \hn_k >N_k$ 
		\item 	$f^{n_i}(B'_{n_i}(x_i,\eps_i))\supset B'_{n_{i-1}}(x_{i-1},\eps_{i-1}) \text{ or } f^{n_i}(B'_{n_i}(x_i,\eps_i))\supset B'_{\hn_{i-1}}(\hx_{i-1},\eps_{i-1})$,
		\item $f^{\hn_i}(B'_{\hn_i}(\hx_i,\eps_i))\supset B'_{n_{i-1}}(x_{i-1},\eps_{i-1}) \text{ or } f^{\hn_i}(B'_{\hn_i}(\hx_i,\eps_i))\supset B'_{\hn_{i-1}}(\hx_{i-1},\eps_{i-1})$
	\end{enumerate}

Assume the above conditions are fulfilled for $i=0,\dots,k$ for some $k\geq 1$.
We proceed with the construction to get $B_{n_{k+1}}(x_{k+1},\eps_{k+1})$ and $B_{\hn_{k+1}}(\hx_{k+1},\eps_{k+1})$. Choose $l_{k+1},m_{k+1},r_{k+1} \in\Lambda$ such that $d(l_{k+1},r_{k+1})<\frac{\eta_{k+2}}{4}$ and:
$$
a<l_{k+1}<m_{k+1}<r_{k+1}<l_k.
$$
Take some $x_{k+1}, \hx_{k+1} \in \Orb(y)$ such that $d(x_{k+1},l_{k+1})<\gamma_k$ and $d(\hx_{k+1},r_{k+1})<\gamma_k$, let $\gamma_{k+1} = \frac18\min\{ d(l_{k+1},x_{k+1}), d(r_{k+1},\hx_{k+1})),\eta_{k+3} \}$. Apply Lemma \ref{lem:10.4} to obtain $N_{k+1} = N(\eps_{k+1},\gamma_{k+1})>0$ and pick $n_{k+1}, \hn_{k+1}>N_{k+1}$ such that:
\begin{align*}
	B'_{n_{k+1}}(x_{k+1},\eps_{k+1})\subset B(x_{k+1},\gamma_{k+1}) &\text{ and }d(f^{n_{k+1}}(x_{k+1}),m_k)<\gamma_{k+1},\\
	B'_{\hn_{k+1}}(\hx_{k+1},\eps_{k+1})\subset B(\hx_{k+1},\gamma_{k+1}) &\text{ and }d(f^{\hn_{k+1}}(\hx_{k+1}),m_k)<\gamma_{k+1},\\
	\bigcup_{i=0}^{n_{k+1}-1}B(f^i(x_{k+1}),\eps_{k+1})&\supset \omega(y),\\
	\bigcup_{i=0}^{\hn_{k+1}-1}B(f^i(\hx_{k+1}),\eps_{k+1})&\supset \omega(y).
\end{align*}
Next, by Lemma \ref{lem:10.5} we have:
\begin{align*}
	\eta_{k+1}&\leq \diam f^{n_{k+1}}(B'_{n_{k+1}}(x_{k+1},\eps_{k+1})),\\
	\eta_{k+1}&\leq \diam f^{\hn_{k+1}}(B'_{\hn_{k+1}}(\hx_{k+1},\eps_{k+1})). 
\end{align*}
The estimations analogous to those in (\ref{est1}) and (\ref{est2}) imply that:
$$
f^{n_{k+1}}(B'_{n_{k+1}}(x_{k+1},\eps_{k+1}))\supset B'_{n_k}(x_k,\eps_k) \text{ or }f^{n_{k+1}}(B'_{n_{k+1}}(x_{k+1},\eps_{k+1}))\supset B'_{\hn_k}(\hx_k,\eps_k)
$$
and:
$$
f^{\hn_{k+1}}(B'_{\hn_{k+1}}(\hx_{k+1},\eps_{k+1}))\supset B'_{\hn_k}(x_k,\eps_k) \text{ or }f^{\hn_{k+1}}(B'_{\hn_{k+1}}(\hx_{k+1},\eps_{k+1}))\supset B'_{\hn_k}(\hx_k,\eps_k).
$$
The induction is completed.

Now we will perform a construction similar to the one from Lemma \ref{lem:MixPer}. We are going to construct a nested sequence of closed sets $\{Z_i\}_{i\geq 0}$ such that $Z_i \subset \{l,r\}^{\mathbb{N}_0}$ for each $i\geq 0$ as follows. A point $z=\{z_n\}_{n\geq 0}$ from $ \{l,r\}^{\mathbb{N}_0}$ is an element of $Z_0$ if one of the following holds:
\begin{align*}
	z_0=l &\text{ and } \left(B'_{n_0}(x_0,\eps_0)\subset f^{n_1}(B'_{n_1}(x_1,\eps_1)) \text{ or }B'_{n_0}(x_0,\eps_0)\subset f^{\hn_1}(B'_{\hn_1}(\hx_1, \eps_1))  \right),\\
	z_0=r &\text{ and } \left( B'_{\hn_0}(\hx_0,\eps_0)\subset f^{n_1}(B'_{n_1}(x_1,\eps_1)) \text{ or } B'_{\hn_0}(\hx_0,\eps_0)\subset f^{\hn_1}(B'_{\hn_1}(\hx_1, \eps_1)) \right).
\end{align*} 
For $i>0$ an infinite sequence a point $w$ belongs to $Z_i$ if there exists some $z \in Z_{i-1}$ such that $z_{[0,i-1]}=w_{[0,i-1]}$ and one of the following holds:
\begin{align*}
	w_i&=l, w_{i-1}=l\text{ and }B'_{n_{i-1}}(x_{i-1},\eps_{i-1})\subset f^{n_{i}}(B'_{n_{i}}(x_{i},\eps_{i})),\\
	w_i&=l, w_{i-1}=r\text{ and }B'_{n_{i-1}}(x_{i-1},\eps_{i-1})\subset f^{\hn_{i}}(B'_{\hn_{i}}(\hx_{i},\eps_{i})),\\
	w_i&=r, w_{i-1}=l\text{ and }B'_{\hn_{i-1}}(\hx_{i-1},\eps_{i-1})\subset f^{n_{i}}(B'_{n_{i}}(x_{i},\eps_{i})),\\
	w_i&=r, w_{i-1}=r\text{ and }B'_{\hn_{i-1}}(\hx_{i-1},\eps_{i-1})\subset f^{n_{i}}(B'_{\hn_{i}}(\hx_{i},\eps_{i})). 
\end{align*}
The intersection $Z = \bigcap_{i\geq 0} Z_i $ is non-empty, since $\{Z_i\}_{i\geq 0}$ is nested sequence of compact sets, so fix some $z \in Z$. Depending on the first symbol of $z$ put: 
$$
U = \begin{cases}\Int B'_{n_0}(x_0,\eps_0) \text{ if }z_0=l\\ \Int B'_{\hn_0}(\hx_0,\eps_0)\text{ if } z_0=r\end{cases}
$$
and define:
$$
k_i = \begin{cases}n_i \text{ if }z_i=l\\ \hn_i\text{ if }z_i=r \end{cases}, i \geq 1.
$$
and 
$$B'_{i} = \begin{cases}B'_{k_i}(x_i,\eps_i) \text{ if }z_i=l\\ B'_{k_i}(\hx_i,\eps_i) \text{ if }z_i=r\end{cases}, i\geq 1.$$ 
Let $\tx_0$ be an arbitrary point from $U$. The backward branch $\{ \tx_j \}_{j\leq 0}$ starting at $\tx_0$ is defined as follows:
\begin{align*}
	\tx_j &\in B'_{m}\text{ such that }f^{k_{m}}(\tx_j) = \tx_{j+k_{m}} \text{ for } j=-\sum_{s=1}^mk_s, m\geq 1, \\
	\tx_{j} &=f(\tx_{j-1}) \text{ for } -\sum_{s=1}^{m}k_s < j < -\sum_{s=1}^{m-1}k_s, m> 1, \text{ or } -k_1<j<0.
\end{align*}
By the definition of $Z$ we have $f^{k_j}(B'_j)\supset B'_{j-1}$ for every $j\geq 1$, so $\tx_j$ is well defined for $j< 0$. 
	
To prove that $\alpha(\{\tx_j\}_{j\leq 0}) \supset \omega(y)$ fix a point $q \in \omega(y)$ and an integer $m~>~0$. Let $b \in \{ x_m,\hx_m \}$ be the point such that $B'_m=B'_{k_m}(b,\eps_m)$. As $\omega(y) \subset \bigcup_{i=0}^{k_m-1}B(f^i(b),\eps_m)$ we can find an integer $0\leq s<k_m$ such that $d(f^s(b),q)<\eps_m$. Simultaneously $d(f^s(b),f^s(\tx_j))<\eps_m$, for $j=-\sum_{i=1}^mk_i$, since $\tx_j\in B'_m$ by the definition of $\{ \tx_j \}_{j\leq 0}$. Altogether we have $d(\tx_{j+s},q)<2\eps_m$ so indeed  $\alpha(\{\tx_j\}_{j\leq 0}) \supset \omega(y)$.

On the other hand, there exists an increasing sequence $\{s_k\}_{k \in \mathbb{N}}$ such that the orbit $\Orb(f^{s_k}(y))$ contains $f^l(x_k)$ for all $0\leq l <n_k$ and $f^l(\hat{x}_k)$ for all $0\leq l<\hat{n}_k$ and the  orbit $\Orb(f^{s_k}(y))$ is $\eps_k$-close from $\omega(y)$.
Therefore for each $m \in \mathbb{N}$ there is $N\geq 0$ such that $\{\tx_j\}_{j\leq -N}\subset B(\overline{\Orb(f^{s_m}(y))},\eps_m)$ which yields that for any $m$
we have $\alpha(\{\tx_j\}_{j\leq 0})\subset B(\omega(y),2\eps_m)$ so $\alpha(\{\tx_j\}_{j\leq 0})\subset \omega(y)$ indeed.
\end{proof}

\begin{rem}\label{rm:ArbLargeConst}
	In the proof of Lemma \ref{lem:MixInfinite} by continuity of map $f$ and the proper choice of the sequence $\{\eps_n\}_{n \in \mathbb{N}}$ with $\eps_n$ decreasing sufficiently fast, for any $\beta>0$ and an arbitrarily large $s \in \mathbb{N}$, we will find a point  following the orbits of each $x_n$ and $\hx_n$ for $s$ iterations at distance at most $\beta$. To achieve this, numbers $n_k,\hn_k$ in the construction must be very large, to overpass any fixed $s$, not only greater that $N_k$.  
\end{rem}

\begin{thm}\label{thm:alphaMixMap}
	Let $f\colon G\to G$ be a mixing graph map. For every $z \in G\setminus \mathcal{I}(f)$ and every $y \in G$ there exists a backward branch $\{z_j\}_{j\leq 0}$ such that $z_0=z$ and $\alpha(\{z_j\}_{j\leq 0})=\omega(y)$.
\end{thm}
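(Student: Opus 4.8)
The plan is to derive the theorem from Lemma~\ref{lem:MixPer} and Lemma~\ref{lem:MixInfinite} by transporting the backward branch they produce from an auxiliary open set to the prescribed accessible point $z$. First I would split into two cases according to the cardinality of $\omega(y)$: if $\omega(y)$ is finite I apply Lemma~\ref{lem:MixPer}, and if $\omega(y)$ is infinite I apply Lemma~\ref{lem:MixInfinite}. In either case this yields a nonempty open connected set $U\subset G$ such that every point of $U$ is the starting point of some backward branch whose $\alpha$-limit set equals $\omega(y)$.

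Next I would use accessibility of $z$ to pull it back into $U$. Since $U$ is a nonempty open connected subset of the nondegenerate graph $G$, it contains a nondegenerate subgraph $W\subset U$ (for instance a nondegenerate closed arc contained in $U$, which exists because $U$ is open, nonempty and $G$ is nondegenerate). Because $z\in G\setminus\mathcal{I}(f)$, the defining formula \eqref{eq:inac} gives $z\in\bigcup_{k=0}^{\infty}\Int f^{k}(W)$, so there are an integer $k\geq 0$ and a point $x_{0}\in W\subset U$ with $f^{k}(x_{0})=z$. By the choice of $U$ there is a backward branch $\{\tx_{j}\}_{j\leq 0}$ with $\tx_{0}=x_{0}$ and $\alpha(\{\tx_{j}\}_{j\leq 0})=\omega(y)$.

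Finally I would prepend to this branch the finite forward orbit segment running from $x_{0}$ to $z$: put $z_{-k+i}=f^{i}(x_{0})$ for $i=0,\dots,k$ (so that $z_{0}=z$ and $z_{-k}=x_{0}=\tx_{0}$) and $z_{j}=\tx_{j+k}$ for $j\leq -k$. Then $\{z_{j}\}_{j\leq 0}$ is a backward branch of $z$, and since $z_{j}=\tx_{j+k}$ for all $j\leq -k$ while only finitely many further terms were inserted, the accumulation points of $\{z_{j}\}$ as $j\to-\infty$ coincide with those of $\{\tx_{j}\}$; hence $\alpha(\{z_{j}\}_{j\leq 0})=\alpha(\{\tx_{j}\}_{j\leq 0})=\omega(y)$. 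The case $k=0$ is immediate, as then $z\in W\subset U$ already.

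I do not anticipate a genuine obstacle here: the substantive work is contained in Lemmas~\ref{lem:MixPer} and~\ref{lem:MixInfinite}. The only points needing minor care are checking that $U$ genuinely contains a subgraph, so that the definition \eqref{eq:inac} of $\mathcal{I}(f)$ applies to it, and observing that prepending finitely many terms to a backward branch leaves its $\alpha$-limit set unchanged, which is immediate since that set depends only on the tail as $j\to-\infty$.
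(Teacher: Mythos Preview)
Your proof is correct and follows essentially the same approach as the paper's: split on the cardinality of $\omega(y)$, invoke Lemma~\ref{lem:MixPer} or Lemma~\ref{lem:MixInfinite} to obtain $U$, pull $z$ back into $U$, and prepend a finite forward segment. The only cosmetic difference is that where the paper cites \cite[Theorem~4.6]{HKO} to find $k$ with $z\in\Int f^{k}(U)$, you work directly from the defining formula~\eqref{eq:inac} by first fixing a subgraph $W\subset U$; this is a slightly more self-contained way of reaching the same conclusion.
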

\begin{proof}
 Depending on the cardinality of $\omega(y)$ we apply Lemma \ref{lem:MixPer} or Lemma \ref{lem:MixInfinite} to get an open set $U\subset G$ such that for every $\tx_0 \in U$, there is a backward branch $\{\tx_j\}_{j\leq 0}$ with $\alpha(\{\tx_j\}_{j\leq 0})=\omega(y)$. Let $z\in G\setminus\mathcal{I}(f)$. Then there is $\eps>0$ such that $z\in G\setminus  B(\mathcal{I}(f),\eps)$.  By \cite[Theorem 4.6]{HKO}, we can find $k>0$ such that $z\in  \Int f^k(U)$ and thus there exists a preimage $z'\in U$ such that $z=f^k(z')$. Since $z'$ is from $U$ we can find a backward branch $\{\tz_j\}_{j\leq 0}$ with $\tz_0=z'$ and $\alpha(\{\tz_j\}_{j\leq 0})=\omega(y)$. The desired backward branch $\{z_j\}_{j\leq 0}$ starting at $z$ has the form $z_0=z=f^k(z'), z_{-1}=f^{k-1}(z'),\ldots, z_{-k+1}=z'=\tz_0, z_{-k}=\tz_{-1},z_{-k-1}=\tz_{-2},\ldots. $
\end{proof}

\begin{rem}
By \cite[Theorem 4.6]{HKO} we know that inaccessible points are periodic and the set $\mathcal{I}(f)$ is finite and backward invariant, so the only $\alpha$-limit sets of inaccessible points are periodic orbits contained in $\mathcal{I}(f)$.
\end{rem}

Having proved that for mixing map every $\omega$-limit set in $G$ is an $\alpha$-limit set of some backward branch, the natural question is whether it is also true that every $\alpha$-limit set of a backward branch in $G$ is the $\omega$-limit set of some point from $G$ at the same time. The answer to that problem is given below.
\begin{thm}\label{thm:mixingOmega}
	Let $f\colon G\to G$ be the mixing graph map. Then for every backward branch $\{x_j\}_{j\leq 0}\subset G$ the set $\alpha(\{x_j\}_{j\leq 0})$ is equal to an $\omega$-limit set of some point in $G$.
\end{thm}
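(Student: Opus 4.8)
The plan is to argue by cases on the size of the set $A:=\alpha(\{x_j\}_{j\leq 0})$, using throughout that $A$ is closed, strongly invariant, and internally chain transitive.

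First the easy cases. If $A$ is finite, then by Lemma~\ref{lem:alphaPer} it is a single periodic orbit $\Orb(p)$, so $\omega(p)=\Orb(p)=A$. If $A$ has nonempty interior, fix a nonempty open $U\subseteq A$; since $f$ is mixing, $\bigcup_{n\geq 0}f^{n}(U)$ is dense in $G$, and since $f(A)\subseteq A$ this union lies in $A$, forcing $A=G$; as a mixing graph map has a point $y$ with dense orbit and $G$ has no isolated point, $\omega(y)=G=A$. Finally, if $A$ is infinite and minimal, then $\omega(p)$ is a nonempty closed invariant subset of $A$ for every $p\in A$, hence $\omega(p)=A$. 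So it remains to treat an infinite, nowhere dense $A$.

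In that case I would construct directly a point $\beta\in G$ with $\omega(\beta)=A$, reusing the Bowen-ball machinery from the proofs of Lemmas~\ref{lem:MixPer} and~\ref{lem:MixInfinite}, but running it forward and with all centres placed inside $A$. The crucial observation is that if the centre $y$ of a Bowen ball $B'_{n}(y,\varepsilon)$ lies in $A$, then by strong invariance of $A$ the whole segment $y,f(y),\dots,f^{n}(y)$ stays in $A$, so every point of $B'_{n}(y,\varepsilon)$ has its first $n+1$ iterates within $\varepsilon$ of $A$. Using Lemma~\ref{lem:10.4} to make such Bowen balls shrink, Lemma~\ref{lem:10.5} to ensure that $f^{n}(B'_{n}(y,\varepsilon))$ covers an arc of definite size issuing from $f^{n}(y)\in A$, and internal chain transitivity of $A$ to schedule successive centres $y_1,y_2,\dots$ along small-step chains in $A$ that sweep through finer and finer finite nets of $A$, one obtains a nested family of Bowen balls whose common point $\beta$ has a forward orbit that (i) eventually lies in $B(A,\varepsilon_k)$ with $\varepsilon_k\to 0$, whence $\omega(\beta)\subseteq\bigcap_k\overline{B(A,\varepsilon_k)}=A$, and (ii) returns within $\varepsilon_k$ of every point of $A$ infinitely often, whence $\omega(\beta)\supseteq A$; so $\omega(\beta)=A$. (An alternative packaging: for each $\varepsilon$ shadow, using the same lemmas, a periodic $\varepsilon$-pseudo-orbit contained in $A$ and $\varepsilon$-dense in $A$ by an honest periodic orbit, obtaining $\omega$-limit sets that converge to $A$ in the Hausdorff metric, and then invoke the theorem of Mai and Shao \cite{MaiShao} that the family of $\omega$-limit sets of a graph map is Hausdorff-closed.)

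The main obstacle is precisely what forces the ``coding'' arguments in Lemmas~\ref{lem:MixPer} and~\ref{lem:MixInfinite}: Lemma~\ref{lem:10.5} guarantees only that $f^{n}(B'_{n}(y,\varepsilon))$ contains \emph{some} small arc emanating from $f^{n}(y)$ --- one cannot prescribe the branch nor the exact landing point in advance --- whereas to make the orbit of $\beta$ dense in $A$ the successive centres must be steered towards prescribed points of $A$, and moreover those centres must be kept inside $A$ even though $A$ is nowhere dense near $f^{n}(y)$. Packaging the admissible itineraries into a compact symbolic space $Z$, as in those lemmas, and proving that $Z$ nonetheless carries an itinerary along which the centres become dense in $A$ --- with the permissible transitions between centres dictated by the internal chain transitivity of $A$ --- is the technical heart of the argument.
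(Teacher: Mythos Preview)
Your parenthetical alternative---approximate $A$ by periodic orbits and invoke the Mai--Shao theorem that the family of $\omega$-limit sets of a graph map is Hausdorff-closed---is exactly the paper's route. But your mechanism for producing those periodic orbits (``shadow a periodic $\varepsilon$-pseudo-orbit contained in $A$'') is unavailable: mixing graph maps need not have the shadowing property, and internal chain transitivity supplies only pseudo-orbits. The paper's key move is to take the Bowen-ball centres not from $A$ but from the backward branch $\{x_j\}$ itself. Since the branch is a genuine backward orbit, choosing $y_k=x_{j_k}$ near a point $m_k\in A$ and then $z_k,\hat z_k$ further back along the branch near points $l_k,r_k\in A$ on either side of $m_k$ (all near a fixed accumulation point of $A$) gives $f^{n_k}(z_k)=f^{\hat n_k}(\hat z_k)=y_k$ for free. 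Then $f^{n_k}(B'_{n_k}(z_k,\varepsilon_k))$ contains $y_k$ and has diameter $\geq\eta_k$, so it must cover one of the two small Bowen balls around $z_k,\hat z_k$; this self-covering (or the two-step version) yields a periodic point $p_k$. Because the tail of the branch is within $\varepsilon_k$ of $A$ and, for $N_k$ large, is $\varepsilon_k$-dense in $A$, the orbit of $p_k$ is $2\varepsilon_k$-close to $A$ in Hausdorff distance; Mai--Shao then finishes.

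Your primary plan---centres inside $A$, transitions scheduled by internal chain transitivity---has a gap you name but do not close. It is worse than a branch-direction ambiguity: since $A$ is nowhere dense, the arc $f^n(B'_n(y,\varepsilon))$ may meet $A$ only at $f^n(y)$, leaving no admissible next centre in $A$ at all. Even when one exists, the coding device of Lemmas~\ref{lem:MixPer}--\ref{lem:MixInfinite} only certifies that \emph{some} itinerary survives; it cannot steer through a prescribed $\varepsilon$-chain, which is what ``sweeping through finer nets of $A$'' requires. And once steering is abandoned, density of $\omega(\beta)$ in $A$ would need the forward orbits of the individual centres $y_i\in A$ to be jointly dense in $A$---false in general for non-minimal $A$. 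The backward branch resolves all three issues at once: its tail already visits every neighbourhood of every point of $A$, and its points are already linked by $f$, so the two-sided covering trick produces an honest periodic orbit without any appeal to shadowing or chain transitivity.
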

\begin{proof}
	If $\alpha(\{x_j\}_{j\leq 0})$ is finite then by Lemma \ref{lem:alphaPer} it is a periodic orbit of a point $p \in G$ and obviously $\omega(p) =\alpha(\{x_j\})_{j\leq 0}$.
Assume  $\alpha(\{x_j\}_{j\leq 0})$ is infinite. Let $a\in \alpha(\{x_j\}_{j\leq 0})$ be an accumulation point of  $\alpha(\{x_j\}_{j\leq 0})$ and $J=[a,b]$ be an arc such that  the set $\Lambda = \alpha(\{x_j\}_{j\leq 0})\cap[a,b]$ accumulates on $a$. We may assume that $J\setminus \{a,b\}$ is an open free arc, that means $(J\setminus \{a,b\})\cap Br(G)=\emptyset$.	
 We  show that $\alpha(\{x_j\}_{j\leq 0})$ is approximated by an infinite sequence of periodic orbits. Let $\{\eps_n\}_{n\geq 0}$ be a sequence of positive numbers such that $\eps_0 = \frac12\diam J$ and $\eps_{n+1}<\frac12\eps_n$ for every $n\geq 0$. 
 Let $\{\eta_n\}_{n\geq 0}$ be the sequence of constants from Lemma \ref{lem:10.5} such that for every $x \in G$ and every $k > 0$ we have:
$$
0<\eta_n\leq \diam f^k(B'_k(x,\eps_n)).
$$
For every $k>0$ fix $l_k,m_k,r_k \in \Lambda$ such that $d(l_k,r_k)<\frac{\eta_k}{4}$ and: 
\begin{align*}
a<l_1&<m_1<r_1,\\
a<l_k&<m_k<r_k<l_{k-1} \text{ for }k>1.
\end{align*}
Put $\gamma_k = \frac18\min\{ d(m_k,l_k),d(m_k,r_k) \}$ and let $N_k=N(\eps_k,\gamma_k)$ be the constant from  Lemma \ref{lem:10.4} implying that $B'_{n}(x,\eps_k)\subset B(x,\gamma_k)$ for every $x \in G$ and $n>N_k$. Take a point $y_k \in \{x_j\}_{j\leq 0}$ from $\gamma_k$-neighborhood of $m_k$ with the property that all points from the backward branch preceeding $y_k$ are within $\eps_k$-distance from $\alpha(\{x_j\}_{j\leq 0})$, that is:
\begin{equation}\label{alphaBranchDist}
	\dist(x_i,\alpha(\{x_j\}_{j\leq 0}))<\eps_k \text{ for }i<j_k\text{ where }y_k = x_{j_k}.
\end{equation}
  Choose $n_k,\hn_k >N_k$ for which there exists $z_k,\hz_k \in \{x_j\}_{j\leq 0}$ such that $d(l_k,z_k)< \gamma_k$, $d(r_k,\hz_k)< \gamma_k$ and $f^{n_k}(z_k) = f^{\hn_k}(\hz_k)=y_k$ and note that increasing $N_k$ when necessary, we may assume the following:
 \begin{equation}\label{BowenBallUnion}
\bigcup_{i=0}^{N_k-1}B(x_{j_k-i},\eps_k)\supset \alpha({\{x_j\}_{j\leq 0}}).
\end{equation}
By the  definition of $\eta_k$ and $N_k$ we have:
\begin{align*}
	\eta_k &\leq \diam f^{n_k}(B'_{n_k}(z_k,\eps_k)),\\
	\eta_k &\leq \diam f^{\hn_k}(B'_{\hn_k}(\hz_k,\eps_k)).
\end{align*}
and $B'_{n_k}(z_k,\eps_k)\subset B(z_k,\gamma_k)$ and $B'_{\hn_k}(\hz_k,\eps_k)\subset B(\hz_k,\gamma_k)$. Moreover, $y_k$ is the element of both $ f^{n_k}(B'_{n_k}(z_k,\eps_k))$ and $f^{\hn_k}(B'_{\hn_k}(\hz_k,\eps_k))$. Taking it all into consideration we get the following:
\begin{equation}\label{est3}
	\begin{split}
		d(y_k,m_k)+d(m_k,l_k)+d(l_k,z_k)+&\diam B'_{n_k}(z_k,\eps_k)\\&<\frac{\eta_k}{32}+\frac{\eta_k}{4}+\frac{\eta_k}{32}+\frac{\eta_k}{16} < \frac{\eta_k}{2},\\
		d(y_k,m_k)+d(m_k,r_k)+d(r_k,\hz_k)+&\diam B'_{\hn_k}(\hz_k,\eps_k)\\&<\frac{\eta_k}{32}+\frac{\eta_k}{4}+\frac{\eta_k}{32}+\frac{\eta_k}{16} < \frac{\eta_k}{2},
	\end{split}
\end{equation}
which implies that $f^{n_k}(B'_{n_k}(z_k,\eps_k))$ covers either $(B'_{n_k}(z_k,\eps_k))$ or $(B'_{\hn_k}(\hz_k,\eps_k))$. In the first case there exists a point of period $n_k$ inside $(B'_{n_k}(z_k,\eps_k))$. In the second case we may take $f^{\hn_k}(B'_{\hn_k}(\hz_k,\eps_k))$ which, by  (\ref{est3}), covers either $B'_{n_k}(z_k,\eps_k)$ or $B'_{\hn_k}(\hz_k,\eps_k)$. Then there exists a point of period $n_k+\hn_k$ in $B'_{n_k}(z_k,\eps_k)$ or a point of period $\hn_k$ inside $B'_{\hn_k}(\hz_k,\eps_k)$. Regardless of the case we found at least one periodic point in $\eps_k$-neighborhood of the backward branch $\{x_j\}_{j\leq 0}$. Some iteration of this periodic point denoted by $p_k$ is contained in $B_{N_k}(x_{j_K-N_k},\eps_k)$. Denote the period of $p_k$ by $d_k$. 
The construction results in a set $\{p_k\}_{k> 0}$ of periodic points and an increasing sequence $\{d_k\}_{k> 0}$ of their periods. It assures that for $k>0$ the orbit of each $p_k$ follows some finite segment starting at $x_{j_k-N_k}$ of length $N_k$ of the backward branch $\eps_k$-close, while at the same time all points from that segment stay $\eps_k$-close to the set $\alpha(\{x_j\}_{j\leq 0})$  by (\ref{alphaBranchDist}). Therefore by (\ref{alphaBranchDist}) and (\ref{BowenBallUnion}) we get that for every $k>0$ and every $q \in \alpha(\{x_j\}_{j\leq 0})$ there exists some $i<N_k$ such that:
$$
d(f^i(p_k),x_{j_k-i})+d(x_{j_k-i},q)<2\eps_k.
$$
Combining it with (\ref{alphaBranchDist}) we get that $d_H(\Orb(p_k),\alpha(\{x_j\}_{j\leq 0}))<2\eps_k$ for $k>0$. By \cite[Theorem 3.1]{MShao} we know that the set of all $\omega$-limit sets is closed, so there exists a point $y \in G$ such that the sequence of orbits $\{\Orb(p_k)\}_{k>0} $ converges to  $\omega(y)$ with respect to Hausdorff metric, which gives $\alpha(\{x_j\}_{j\leq 0})=\omega(y)$ completing the proof. 
\end{proof}
\section{Relation of $\alpha$-limit sets of backward branches to maximal $\omega$-limit sets}
We will use the notation from a series of papers by A. Blokh \cite{B1,B2,B3}. In Blokh's papers, a ``graph'' (also called a one-dimensional branched manifold) is not assumed to be connected, and is actually a finite union of graphs with respect to the definition of a~graph we use in the present paper. We reformulate his results for our purposes where necessary in a similar way as authors in \cite{SR}. Therefore we will provide references from both \cite{B1,B2,B3} and \cite{SR}.

A subgraph $K$ of $G$ is called \emph{periodic of period $k$} or \emph{$k$-periodic} if $K, f(K), . . . , f^{k-1}(K)$ are pairwise disjoint and $f^k(K) =K.$ If, instead of $f^k(K) =K$, it is known only that $f^k(K)\subseteq K$, the subgraph $K$ is called \emph{weakly $k$-periodic}. Then the set $\Orb(K) =\cup_{i=0}^{k-1} f^i(K)$ is called a \emph{$k$-cycle of graphs} if $K$ is $k$-periodic and a \emph{weak $k$-cycle of graphs} if $K$ is weakly $k$-periodic. We will write just \emph{cycle of graphs} and \emph{weak cycle of graphs} when the period $k$ is not relevant.

We start this section with simple facts about cycles of graphs containing an infinite $\alpha$-limit set of a backward branch.
\begin{lem}\label{lm:cycle}
Let $f\colon G \to G$ be a graph map and $M\subseteq G$ be a weak $n$-cycle of graphs. Then there is an $n$-cycle of graphs $\hat{M}\subseteq M$. If $M$ contains an infinite $\alpha$-limit set of a backward branch $\alpha(\{y_i\}_{i\leq 0})$ then $\hat{M}$ is non-degenerate and $\alpha(\{y_i\}_{i\leq 0})\subseteq\hat{M}\subseteq M$.
\end{lem}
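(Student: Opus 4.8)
The plan is to produce $\hat M$ explicitly: writing $M=\Orb(K)$ for the weakly $n$-periodic subgraph $K$ (so $K,f(K),\dots,f^{n-1}(K)$ are pairwise disjoint and $f^n(K)\subseteq K$), I would set
$$\hat K:=\bigcap_{m\geq 0}f^{mn}(K),\qquad \hat M:=\Orb(\hat K)=\bigcup_{i=0}^{n-1}f^i(\hat K).$$
First I would record that $\{f^{mn}(K)\}_{m\geq 0}$ is a nested decreasing sequence of nonempty compacta: it is nested because $f^n(K)\subseteq K$ gives $f^{(m+1)n}(K)=f^{mn}\bigl(f^n(K)\bigr)\subseteq f^{mn}(K)$, and each term is a nonempty compact connected subgraph, being the continuous image of the connected subgraph $K$ (a subcontinuum of a graph is again a graph). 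Consequently $\hat K$ is nonempty, compact, and connected — a nested intersection of continua is a continuum — hence a subgraph of $G$.

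Next I would check that $\hat K$ is $n$-periodic. Strong invariance $f^n(\hat K)=\hat K$ follows from the standard fact that a continuous map commutes with intersections of nested compacta: $f^n(\hat K)=\bigcap_{m\geq 0}f^{(m+1)n}(K)=\bigcap_{m\geq 1}f^{mn}(K)=\hat K$, the last equality because $K\supseteq f^n(K)\supseteq\cdots$. Pairwise disjointness of $\hat K,f(\hat K),\dots,f^{n-1}(\hat K)$ is immediate from $f^i(\hat K)\subseteq f^i(K)$ and the pairwise disjointness of the $f^i(K)$. Thus $\hat M=\Orb(\hat K)$ is an $n$-cycle of graphs, and $\hat M\subseteq M$ since $f^i(\hat K)\subseteq f^i(K)$ for every $i$. (One cannot drop the extra hypothesis in the second assertion: without it $\hat K$ may collapse to a single point, i.e.\ $\hat M$ may be merely a periodic orbit.)

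For the second part, assume $A\subseteq M$ is infinite with $f(A)=A$, and put $A_i:=A\cap f^i(K)$ for $0\le i\le n-1$. Since the $f^i(K)$ are pairwise disjoint and cover $A$, we get $A=A_0\sqcup\cdots\sqcup A_{n-1}$. Because $f(A_i)\subseteq f(A)\cap f^{(i+1)\bmod n}(K)=A_{(i+1)\bmod n}$ while $\bigcup_i f(A_i)=f(A)=A=\bigsqcup_i A_i$, disjointness forces $f(A_i)=A_{(i+1)\bmod n}$ for every $i$; in particular $f^n(A_0)=A_0$. Iterating, $A_0=f^{mn}(A_0)\subseteq f^{mn}(K)$ for all $m\geq 0$, so $A_0\subseteq\hat K$, whence $A_i=f^i(A_0)\subseteq f^i(\hat K)$ and $A=\bigcup_i A_i\subseteq\hat M$. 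Finally, a continuous map sends finite sets to finite sets, so if some $A_i$ is finite then so is $A_{(i+1)\bmod n}$, and going around the cycle all $A_i$ would be finite, contradicting that $A=\bigsqcup_i A_i$ is infinite; hence $A_0$ is infinite, so $\hat K\supseteq A_0$ is infinite and therefore non-degenerate, and so is $\hat M$.

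The only genuinely delicate point — the rest is bookkeeping with the cyclic structure — is guaranteeing that $\hat K$ is an honest subgraph rather than, say, a Cantor set or a set with infinitely many components. This is exactly where connectedness of $K$ (hence of each $f^{mn}(K)$) is used, together with the topological facts that subcontinua of a topological graph are topological graphs and that a decreasing intersection of continua is a continuum; I would make sure these two ingredients are cited or stated cleanly before assembling the argument.
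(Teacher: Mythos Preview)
Your proof is correct and follows the same approach as the paper: define $\hat K=\bigcap_{m\ge 0}f^{mn}(K)$ as the nested intersection, verify it is a nonempty continuum with $f^n(\hat K)=\hat K$, and set $\hat M=\Orb(\hat K)$. Your argument is considerably more detailed than the paper's --- in particular you spell out why $f^n$ commutes with the nested intersection, why the pieces $A_i=A\cap f^i(K)$ satisfy $f(A_i)=A_{(i+1)\bmod n}$, and why some $A_i$ (hence $\hat K$) must be infinite --- whereas the paper simply asserts $A\subseteq\Orb(\hat K)$ and non-degeneracy without justification. Your parenthetical remark that $\hat K$ may degenerate to a point absent the hypothesis on $A$ is also a fair observation about the first sentence of the lemma as stated; the paper's proof has the same feature.
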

\begin{proof}
Let $M=\Orb(K)$, where $K$ is an $n$-periodic subgraph of $G$. The set $\hat{K}:=\cap_{k\geq 0}f^{kn}(K)$ is non-empty, compact and connected since it is a decreasing intersection of non-empty compact connected components, and $f^n(\hat{K})=\hat{K}$. If there is an infinite $\alpha(\{y_i\}_{i\leq 0})\subset M$, then we have $\alpha(\{y_i\}_{i\leq 0})\subseteq \Orb(\hat{K})$ and since $\alpha(\{y_i\}_{i\leq 0})$ is infinite, $\hat{K}$ is non-degenerate. The set $\hat{M}=\Orb(\hat{K})$ is an $n$-cycle of graphs.
\end{proof}
Let $f\colon G\to G$ be a graph map and $\alpha(\{y_i\}_{i\leq 0})\subseteq G$ be an infinite $\alpha$-limit set of a backward branch. Then define $$C(\alpha(\{y_i\}_{i\leq 0})) :=\{X |X\subseteq G \text{ is a cycle of graphs and } \alpha(\{y_i\}_{i\leq 0})\subseteq X\}.$$ Since the graph $G$ is weakly 1-periodic and $\alpha(\{y_i\}_{i\leq 0})\subseteq G$, Lemma \ref{lm:cycle} implies that $C(\alpha(\{y_i\}_{i\leq 0}))$ is never empty.
\begin{lem}\label{lm:intersect_cycle}
Let $f\colon G\to G$ be a graph map and $\alpha(\{y_i\}_{i\leq 0})\subseteq G$ be an infinite $\alpha$-limit set of a backward branch. Let $X,Y\in C(\alpha(\{y_i\}_{i\leq 0}))$. Then there is $Z\subset X\cap Y$ which satisfies $Z\in C(\alpha(\{y_i\}_{i\leq 0}))$ and $Z$ has period not smaller than the maximum of periods of $X$ and $Y$.
\end{lem}
\begin{proof} 
Since $\alpha(\{y_i\}_{i\leq 0})$ is infinite the intersection of $\alpha(\{y_i\}_{i\leq 0})$ with some connected component of $X$ (resp. $Y$) is infinite. In fact, every connected component of $X$ (resp. $Y$) contains infinite subset of $\alpha(\{y_i\}_{i\leq 0})$ since $\alpha(\{y_i\}_{i\leq 0})$ is strongly invariant and the preimage of an infinite set has to be infinite. Let $Z_1,\ldots,Z_n$ denote all the connected components of $X\cap Y$ intersecting $\alpha(\{y_i\}_{i\leq 0})$. For every $i\in\{1,\ldots,n\}$, there is $j\in\{1,\ldots,n\}$ such that $f(Z_i)\subseteq Z_j$ since $f(Z_i)$ is included in some component of $X\cap Y$ and meets $f(\alpha(\{y_i\}_{i\leq 0}))=\alpha(\{y_i\}_{i\leq 0})$. Therefore $Z_i$ is weakly periodic with the period not greater than $n$. The set $\alpha(\{y_i\}_{i\leq 0})$ is internally chain transitive by \cite[Lemma 2.1]{HSZ} and thus $\hat{Z}=\{Z_1,\ldots,Z_n\}$ is one weak $n$-cycle of graphs, i.e. it cannot split into a few disjoint cycles. By Lemma~\ref{lm:cycle} there is an $n$-cycle of graphs $Z\subset \hat{Z}$ such that $\alpha(\{y_i\}_{i\leq 0})\subset Z$, and clearly period of $Z$ cannot decrease.	
\end{proof}

\begin{lem}\label{lm:mincycle}
Let $f\colon G\to G$ be a graph map and $\alpha(\{y_i\}_{i\leq 0})\subseteq G$ be an infinite $\alpha$-limit set of a backward branch such that the periods of the cycles in $C(\alpha(\{y_i\}_{i\leq 0}))$ are bounded. There exists a cycle of graphs $X\in C(\alpha(\{y_i\}_{i\leq 0}))$ such that  $X\subseteq Y$ for every $Y\in C(\alpha(\{y_i\}_{i\leq 0}))$.
\end{lem}
\begin{proof}
Let $j$ be the maximal period of the cycles in $C(\alpha(\{y_i\}_{i\leq 0}))$ and by $C_j\subseteq C(\alpha(\{y_i\}_{i\leq 0}))$ denote the family of $j$-cycles of graphs containing the set $\alpha(\{y_i\}_{i\leq 0})$. We will show that there exists $X\in C_j$, such that, for every $\hat{X}\in C_j$, $\hat{X}\subseteq X$ implies $\hat{X}=X$. Let $(Y_{\lambda})_{\lambda\in\Gamma}$ be a totally ordered family in $C_j$ (that is, all elements in $\Gamma$ are comparable and, if $\lambda\leq \mu$, then $Y_{\lambda}\subseteq Y_{\mu}$). Then $Y=\cap_{\lambda\in\Gamma}Y_{\lambda}$ is compact and has $j$ connected components because this is a decreasing intersection of $j$-cycles, and $f(Y) =Y$. Moreover, $\alpha(\{y_i\}_{i\leq 0})  \subseteq Y$ and $Y$ is non-degenerate since $\alpha(\{y_i\}_{i\leq 0})$ is infinite (at least one component of $Y$ is non-degenerate and, by continuity of $f$, every component of $Y$ is non-degenerate). Hence $Y\in C_j$. Thus Zorn's Lemma applies, and there exists a~minimal (with respect to inclusion) element $X\in C_j$ that is, for every $\hat{X}\in C_j$, $\hat{X}\subseteq X$ implies $\hat{X}=X$.

 Let $Y\in C(\alpha(\{y_i\}_{i\leq 0}))$. Then by Lemma \ref{lm:intersect_cycle} there is $X\cap Y\supset Z\in C(\alpha(\{y_i\}_{i\leq 0}))$ which has period greater than or equal to the period of $X$ . On the other hand, the period of $Z$ is at most $j$ by the definition. Hence $Z\in C_j$. Then $Z=X$ by the minimality of $X$, i.e., $X\subseteq Y$.
\end{proof}

 A \emph{generating sequence} or a \emph{sequence generating a solenoidal set} is any nested sequence of cycles of graphs $M_1\supset M_2\supset\cdots$ for $f$ with periods tending to infinity. The intersection $Q=\bigcap_n M_n$ is automatically closed and strongly invariant, i.e. $f(Q)=Q$, and any closed and strongly invariant subset of $Q$ (including $Q$ itself) will be called a \emph{solenoidal set}. Blokh showed that $Q$ contains a perfect minimal set $Q_{min}=Q\cap\overline{Per f}$ such that $Q_{min}=\omega(x)$, for all $x\in Q$, and a maximal $\omega$-limit set (with respect to inclusion) $Q_{max}$ such that $Q_{max}=Q\cap \omega(f)$~\cite[Theorem 1]{B1}. \\

If $x$ is a point of a graph $G$, then by a \emph{side $T$ of the point $x$} we mean a family of open, non-degenerate arcs $\{V_T (x)\}$ containing no branching points, with one endpoint at $x$ and such that if $V_T(x)\in T, \hat{V}_T(x)\in T$, then either $V_T(x)\subseteq \hat{V}_T(x)$ or $\hat{V}_T(x)\subseteq V_T(x)$. Members of the family $T$ are called $T$-sided neighborhoods of $x$.\\
Let $f\colon G\to G$ be a graph map and $M\subset G$ be a cycle of graphs. For every $x\in M$, we define the \emph{prolongation set of $x$ with respect to $f|_M$}: 
$$P_M(x,f)=\bigcap_U\overline{\bigcup_{i=1}^{\infty}f^i(U)},$$ 
where $U$ is a relative neighborhood of $x$ in $M$. If it is clear which map is considered, then we will write $P_M(x)$, if $M=G$ then we will write $P(x,f)$ or $P(x)$. Obviously, $P(x)$ is an invariant closed set and the map $f|_{P(x)}$ is surjective whenever $x$ is a non-wandering point. Similarly, we define the \emph{prolongation set of $x$ with respect to a side $T$}:
 $$P^T_M(x,f)=\bigcap_{V_T(x)}\overline{\bigcup_{i=1}^{\infty}f^i(V_T(x))},$$
where $V_T(x)$ is a relative $T$-sided neighborhood of $x$ in $M$. We will call an arc $V\subseteq G$ \emph{non-wandering} if there is an integer $m\geq 1$ such that $f^m(V)\cap V\neq\emptyset$. It is easy to see that if every $V_T(x)\in T$ is non-wandering then $f|_{P^T(x)}$ is surjective.
\begin{lem}\label{lm:prolongation}
 Let $f\colon G\to G$ be a graph map and $T$ be a side of a point $x\in G$. If every set $V_T(x)\in T$ is non-wandering then $P^T(x)$ is one of the following:
\begin{itemize}
\item $P^T(x)$ is a periodic orbit,
\item $P^T(x)$ is a cycle of graphs,
\item $P^T(x)$ is a solenoidal set $Q$.
 \end{itemize}
\end{lem}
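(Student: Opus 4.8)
The plan is to analyse $P:=P^{T}(x)$ according to its cardinality and, when $P$ is infinite, according to the periods of the cycles of graphs that contain it, combining Lemmas~\ref{lm:cycle}, \ref{lm:intersect_cycle} and \ref{lm:mincycle} with Blokh's description of the non-wandering set of a graph map. First I would fix a base $V_{1}\supseteq V_{2}\supseteq\cdots$ of $T$-sided neighbourhoods of $x$ and set $E_{n}:=\overline{\bigcup_{i\ge 1}f^{i}(V_{n})}$. The elementary facts to record are: $E_{1}\supseteq E_{2}\supseteq\cdots$ with $\bigcap_{n}E_{n}=P$ (so $E_{n}\to P$ in the Hausdorff metric), each $E_{n}$ is forward invariant ($f(E_{n})\subseteq E_{n}$), $f(P)=P$ by the remark quoted immediately before the lemma, and, since $\diam V_{n}\to 0$, every fixed image $f^{i}(V_{n})$ shrinks to the point $f^{i}(x)$ as $n\to\infty$; in particular $f(x)\in P$.

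If $P$ is finite, then $f|_{P}$ is a bijection of a finite set, so $P$ is a union of periodic orbits. Picking $\eps>0$ so small that the $\eps$-balls about the points of $P$ are pairwise disjoint, for all large $n$ each connected set $f^{i}(V_{n})$ ($i\ge 1$) lies in a single such ball, and continuity of $f$ forces consecutive balls to follow the $f|_{P}$-orbit of the first one; hence $\Orb(f(V_{n}))$, and therefore $P$, is contained in one periodic orbit, and $f(P)=P$ gives equality. This settles the first alternative.

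Now assume $P$ is infinite. By Lemma~\ref{lm:cycle} applied to the weakly $1$-periodic graph $G$, the family $C(P)$ of cycles of graphs containing $P$ is non-empty. If the periods occurring in $C(P)$ are unbounded, I build a generating sequence: starting from any $M_{1}\in C(P)$, inductively put $M_{k+1}:=M_{k}\cap X_{k+1}$ where $X_{k+1}\in C(P)$ has period larger than that of $M_{k}$; by Lemma~\ref{lm:intersect_cycle} each $M_{k}$ is again a cycle of graphs in $C(P)$, and $M_{1}\supsetneq M_{2}\supsetneq\cdots$ with periods tending to infinity, so $Q:=\bigcap_{k}M_{k}\supseteq P$ is a solenoidal set and, $P$ being closed and strongly invariant, $P$ is itself a solenoidal set. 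If the periods in $C(P)$ are bounded, Lemma~\ref{lm:mincycle} yields the least cycle of graphs $X\in C(P)$, say $X=\Orb(K)$ with $K$ $j$-periodic and $P\subseteq X$, and the proof reduces to showing $P=X$.

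The equality $P=X$ is the heart of the matter and the point where the hypothesis that every $V_{T}(x)$ is non-wandering is indispensable: without it $P$ could be a basic set, which is a proper, totally disconnected subset of a cycle of graphs. An arc all of whose points are non-wandering cannot be contained in a basic set or in a solenoidal $Q_{max}$ (both are nowhere dense, hence totally disconnected in a graph), so by Blokh's decomposition of $\Omega(f)$ the neighbourhoods $V_{n}$ and their forward orbits are confined to a cycle of graphs on which $f$ has no wandering points; from this I would argue that the prolongation from the side $T$ cannot avoid any subarc of the component of $X$ it meets, for otherwise trimming $X$ along such a subarc would produce a strictly smaller weak cycle of graphs still containing $P$, contradicting the minimality of $X$ from Lemma~\ref{lm:mincycle}. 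Consequently $E_{n}=X$ for all large $n$ and $P=\bigcap_{n}E_{n}=X$. One subcase is handled separately: if $f$ collapses all small $V_{T}(x)$ to the single point $c=f(x)$, then $P=\overline{\Orb(c)}$ does not depend on $n$, strong invariance forces $c$ to be recurrent and $P=\omega(c)$, while the non-wandering hypothesis forces every small $V_{T}(x)$ into $\omega(c)$, so $\omega(c)$ contains an arc and is therefore a cycle of graphs (or a periodic orbit) by the topological classification of $\omega$-limit sets of graph maps. The main obstacle is precisely this $P=X$ step — converting ``everywhere non-wandering near $T$'' into the statement that the prolongation exactly fills a cycle of graphs rather than a thinner invariant set — whereas the finite case and the unbounded-period (solenoidal) case are routine given the three preceding lemmas.
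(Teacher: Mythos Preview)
Your case split (finite $P$; infinite $P$ with unbounded periods in $C(P)$; infinite $P$ with bounded periods) is sound, and the first two cases are fine. The genuine gap is exactly where you flag it: the claim $P=X$ in the bounded-period case. Your argument there rests on two things that do not hold up. First, you read ``every $V_T(x)\in T$ is non-wandering'' as ``every point of $V_T(x)$ is non-wandering'', but the hypothesis means only that each \emph{set} $V_T(x)$ returns, i.e.\ $f^{m}(V_T(x))\cap V_T(x)\neq\emptyset$ for some $m\ge 1$; wandering points may well sit inside $V_T(x)$. Second, invoking Blokh's decomposition of the non-wandering set to confine the $V_n$'s to a nice cycle is circular here --- this lemma is part of the machinery that builds that decomposition. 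The ``trimming $X$'' idea also does not work as stated: removing a subarc that $P$ avoids need not leave a weak cycle of graphs.

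The paper's proof bypasses all of this with a single direct observation you never make: because each $V_T(x)$ returns, $\overline{\Orb(V_T(x))}$ is itself a \emph{weak cycle of graphs}. Indeed, if $f^{m}(V_T(x))\cap V_T(x)\neq\emptyset$ then each $J_k=\bigcup_{i\ge 1}f^{mi+k}(V_T(x))$ is connected, so the orbit closure has finitely many components permuted by $f$. Taking a nested base $W_m\in T$ with $\lambda(W_m)\to 0$, one gets a nested sequence of weak cycles $K_m=\overline{\Orb(W_m)}$ with $P^T(x)=\bigcap_m K_m$. Now the trichotomy is immediate: if the periods of the $K_m$ are bounded and the intersection is non-degenerate, the intersection is a cycle of graphs (surjectivity of $f|_{P}$ upgrades weak to genuine periodicity); if bounded and degenerate, a periodic orbit; if unbounded, pass to the cycles $K'_m=\bigcap_{k\ge 0}f^{k}(K_m)$ via Lemma~\ref{lm:cycle} to get a generating sequence. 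Note that this also closes your gap \emph{within your own framework}: once you know $K_m$ is a weak cycle, Lemma~\ref{lm:cycle} produces a genuine cycle $\hat K_m\in C(P)$ with $P\subseteq\hat K_m\subseteq K_m$; minimality of $X$ gives $X\subseteq\hat K_m\subseteq K_m$ for every $m$, hence $X\subseteq\bigcap_m K_m=P\subseteq X$.
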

\begin{proof}
By assumptions, for every $V_T(x)\in T$, there is $m\geq 1$ such that $f^m(V_T(x))\cap V_T(x)\neq\emptyset$. Clearly, the set $J_k=\cup_{i=1}^{\infty}f^{mi+k}(V_T(x))$ is connected, for $0\leq k<m$. Thus the set $\cup_{k=0}^{m-1}\overline{J_k}=\overline{\Orb(V_T(x))}$ has finitely many components. Let $I\supset V_T(x)$ be a component of $\overline{\Orb(V_T(x))}$ and $n$ be the minimal integer such that $f^n(V_T(x))\cap V_T(x)\neq\emptyset$. Then $f^n(I)\subseteq I$ and $\overline{\Orb(V_T(x))}$ is a weak cycle of graphs.
Let us choose a family of arcs $\{W_m\}_{m=1}^{\infty}$ so that $W_m\in T$, $W_m\supset W_{m+1}$ and $\lambda (W_m)\to 0$ as $m\to\infty$, where $\lambda(A)$ denotes the length of the arc $A$. By the previous reasoning, $K_m:=\overline{\Orb(W_m)}$ is a weak cycle of graphs, for every $m\geq 1$. Then $K_m\supset K_{m+1}$ and $P^T(x) =\cap_{m\geq 1} K_m$. If periods of $K_m$ are bounded and the intersection is non-degenerate then $P^T(x)$ is a cycle of graphs, since $f|_{P^T(x)}$ is surjective.  If periods of $K_m$ are bounded and the intersection is degenerate then $P^T(x)$ is a periodic orbit. If periods of $K_m$ are unbounded then we can find a generating sequence of cycles of graphs $K'_1\supset K'_2\supset\cdots$, where $K'_m:=\cap_{k\geq 0}f^{k}(K_m)$, such that $P^T(x) =\cap_{m\geq 1} K'_m=Q$ is a solenoidal set.
\end{proof}

Let $M\subset G$ be a cycle of graphs. We define the following sets:
$$E(M,f)=\{x\in M: P_M(x,f)=M\}$$
and
$$E_S(M,f)=\{x\in M: \text{there is a side $T$ such that } P^T_M(x,f)=M\}.$$ 
Clearly, $E_S(M,f)\subseteq E(M,f)$. These sets are closed and invariant. If $E(M,f)$ is infinite then, by~\cite[Theorem 2]{B1}, $E_S(M,f)= E(M,f)$. In general, $E_S(M,f)\neq E(M,f)$ and $f(E(X, f))\neq E(X, f)$. See the following example from \cite{SR}.
\begin{exmp}
Let $\mathbb{S}$ be a circle and decompose $\mathbb{S}$ as the union of "western half-circle" and "eastern half-circle". Let $f$ restricted to any of these half-circles be topologically conjugate to the tent map, the "south pole" of $\mathbb{S}$ being a fixed point of $f$ and the "north pole" being mapped to the "south pole". Then $E(\mathbb{S}, f)$ consists of the two "poles" but $f(E(\mathbb{S}, f))$ is a singleton containing just the "south pole" and $E_S(\mathbb{S},f)$ is empty set.
\end{exmp}

\begin{thm}\cite{B1,B2}\label{E}
Let $M\subset G$ be a cycle of graphs such that $E_S(M,f)$ is non-empty. If $E_S(M,f)$ is finite then it is a periodic orbit. Otherwise, $E_S(M,f)= E(M,f)$ and it is an infinite maximal $\omega$-limit set.
\end{thm}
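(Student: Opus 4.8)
The plan is to obtain the statement from Blokh's analysis of cycles of graphs in \cite{B1,B2}, after reducing to a convenient normal form and carrying out some bookkeeping with (one-sided) prolongation sets. We may assume $M$ is non-degenerate, since otherwise $M$ is a single periodic orbit, no point of $M$ has a side, $E_S(M,f)=\emptyset$, and there is nothing to prove. Next I would reduce to the case $f(M)=M$ with $M$ connected: write $M=\Orb(K)$ with $K$ a $k$-periodic subgraph and set $g=f^k|_K$, so that $K$ is a $1$-cycle of graphs for $g$. Using that $f$ permutes the components of $M$ cyclically and that, for a relative one-sided neighbourhood $V$ of a point $x\in K$, one has $\overline{\bigcup_{i\ge 1}f^i(V)}=\Orb\bigl(\overline{\bigcup_{i\ge 1}g^i(V)}\bigr)$, one checks
\[
  x\in K\cap E_S(M,f)\iff x\in E_S(K,g),\qquad E_S(M,f)=\bigcup_{j=0}^{k-1}f^j\bigl(E_S(K,g)\bigr),
\]
and the analogous identities for $E(M,f)$. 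In particular $E_S(M,f)$ is finite iff $E_S(K,g)$ is, and it is a periodic orbit (resp. an $\omega$-limit set) for $f$ iff $E_S(K,g)$ is for $g$ (for the $\omega$-limit part using that a cyclically permuted finite union of $\omega$-limit sets of $g$ is an $\omega$-limit set of $f$). Hence we may and do assume $f(M)=M$ with $M$ connected.

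For the infinite case, assume $E_S(M,f)$ is infinite. Then $E(M,f)\supseteq E_S(M,f)$ is infinite, so $E_S(M,f)=E(M,f)$ by \cite[Theorem~2]{B1}; and an infinite $E(M,f)$ is precisely one of the basic sets of Blokh's spectral decomposition, hence an infinite maximal $\omega$-limit set by \cite[Theorem~1]{B1} (cf. \cite{SR}). Lemma \ref{lm:mincycle} is used only to pass, without changing $E(M,f)$, to the minimal cycle of graphs containing it --- the form in which Blokh's statements are phrased --- which is legitimate because shrinking $M$ only shrinks each prolongation set, while $P_M(x,f)=M$ already forces $x$ into every cycle of graphs containing $E(M,f)$.

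For the finite case, assume $E_S(M,f)$ is finite and non-empty; we must show it is a single periodic orbit. First I would show it is strongly invariant. It is forward invariant by hypothesis; conversely, given $y\in E_S(M,f)$ with side $T'$ realising $P^{T'}_M(y,f)=M$, surjectivity of $f|_M$ (valid since $f(M)=M$) yields $x\in M$ and a side $T$ at $x$ with $V_T(x)\subseteq M$ and $f(V_T(x))\supseteq V_{T'}(y)$ for all sufficiently small one-sided neighbourhoods, whence $M=P^{T'}_M(y,f)\subseteq P^{T}_M(x,f)\subseteq M$ and so $x\in E_S(M,f)$ with $f(x)=y$. Thus $f\bigl(E_S(M,f)\bigr)=E_S(M,f)$, so $f$ acts as a bijection on the finite set $E_S(M,f)$, which is therefore a union of periodic orbits. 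To see it is a single orbit I would invoke Blokh once more: a non-empty $E_S(M,f)$ always coincides with an $\omega$-limit set (the finite counterpart of the basic-set statement in \cite{B1,B2}), and, being finite, it is then a single periodic orbit by Remark \ref{rem:omegaFinite}.

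The genuinely hard input --- which we cite rather than reprove --- is Blokh's dichotomy: that an infinite $E(M,f)$ is a maximal $\omega$-limit set with $E_S(M,f)=E(M,f)$, and that a non-empty $E_S(M,f)$ is always an $\omega$-limit set. On our side the delicate steps are the period reduction and the ``choose a side pointing into $M$'' step; both require care exactly when branching points of $G$ lie on $M$ and possibly coincide with points of $E_S(M,f)$, since then a point of $M$ carries several sides and the matching of one-sided neighbourhoods under $f$, as well as under passing between $(M,f)$ and $(K,f^k)$, must be performed componentwise.
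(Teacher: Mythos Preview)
The paper does not supply its own proof of this theorem: it is stated with attribution to Blokh \cite{B1,B2} and used as a black box. So there is no ``paper's proof'' to compare against beyond the bare citation, and your proposal is in effect an attempt to unpack that citation.

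Your reduction to the connected case and your treatment of the infinite case are in line with how the result is obtained in Blokh's work: once $E(M,f)$ is infinite, \cite[Theorem~2]{B1} gives $E_S(M,f)=E(M,f)$, and Blokh's spectral decomposition identifies this set as a maximal $\omega$-limit set. That part is fine as a sketch.

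The finite case, however, has a circularity. After reducing $E_S(M,f)$ to a finite union of periodic orbits, you pass to a \emph{single} orbit by invoking that ``a non-empty $E_S(M,f)$ always coincides with an $\omega$-limit set (the finite counterpart of the basic-set statement in \cite{B1,B2})''. But that assertion is exactly what the theorem claims in the finite case; it is not an earlier, independently citable lemma in \cite{B1,B2}. So at this step you are assuming the conclusion. A genuine argument is needed here---for instance, one can argue directly that two disjoint periodic orbits cannot both lie in $E_S(M,f)$ by analysing how one-sided prolongation sets behave near an isolated periodic orbit---but this is precisely the content one has to extract from Blokh, not cite away. Your surjectivity step (producing a preimage side $T$ with $f(V_T(x))\supseteq V_{T'}(y)$) is also stated more confidently than is warranted: it can fail at individual preimages (e.g.\ turning points or flat spots of $f$), and one must argue that \emph{some} preimage in $M$ has the required property. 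You flag this as delicate, but it deserves an actual argument rather than a remark.
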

Let $E(M,f)$ be the infinite maximal $\omega$-limit set from Theorem \ref{E}. Then we say that $E(M,f)$ is a \emph{basic set} if $Per (f)\cap M\neq \emptyset$ and we denote it by $D(M,f)$, while for $Per (f)\cap M= \emptyset$ we say that $E(M,f)$ is a \emph{circumferential set} and we denote it by $S(M,f)$. We will write just $D(M)$ and $S(M)$ in the case where $f$ is clear from the context.
\begin{rem}\label{rm:mincyc} The set $D(M)$ (resp. $S(M)$) is contained in a minimal (with respect to inclusion) cycle of graphs if the periods of the cycles of graphs from the family $C(D(M))$ (resp. $C(S(M))$) are bounded according to Lemma \ref{lm:mincycle}. It was shown in \cite[Remark 17] {SR} that for both $D(M)$ and $S(M)$ this is the case and the minimal cycle of graphs containing $D(M)$ (resp. $S(M)$) is exactly $M$.

\end{rem}
\begin{thm}\label{prop:max}
Let $f\colon G\to G$ be a graph map and $\{y_i\}_{i\leq 0}$ be a backward branch starting at a point $y\in G$. Then $\alpha(\{y_i\}_{i\leq 0})$ is contained in a maximal $\omega$-limit set.
\end{thm}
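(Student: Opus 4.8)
Write $A:=\alpha(\{y_i\}_{i\le 0})$, a closed strongly invariant set. If $A$ is finite, then by Lemma~\ref{lem:alphaPer} it is a single periodic orbit $\Orb(p)=\omega(p)$, and since the family of $\omega$-limit sets of a graph map is closed in the Hausdorff metric (\cite{MShao}), Zorn's lemma puts it inside a maximal $\omega$-limit set. So assume $A$ is infinite; recall also that $A\subseteq\omega(f)$ (the inclusion $\SA(f)\subseteq\omega(f)$ from the introduction) and that $C(A)\neq\emptyset$ (Lemma~\ref{lm:cycle}). The plan is to split according to whether the periods of the cycles of graphs in $C(A)$ are bounded.

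\emph{Unbounded periods.} Choosing $N_k\in C(A)$ of period $>k$ and setting $M_k:=N_1\cap\dots\cap N_k$, Lemma~\ref{lm:intersect_cycle} gives $M_k\in C(A)$ of period $>k$ with $M_k\supseteq M_{k+1}$, so $\{M_k\}_{k\ge 1}$ is a generating sequence and $Q:=\bigcap_k M_k$ is a solenoidal set with $A\subseteq Q$. By \cite[Theorem~1]{B1}, $Q_{max}=Q\cap\omega(f)$ is a maximal $\omega$-limit set, and since $A\subseteq Q\cap\omega(f)$ we are done.

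\emph{Bounded periods.} Lemma~\ref{lm:mincycle} gives a minimal $M\in C(A)$, i.e. $A\subseteq M\subseteq Y$ for every $Y\in C(A)$. I will show $A\subseteq E_S(M,f)$; granting this, $E_S(M,f)$ is infinite, so by Theorem~\ref{E} (and \cite[Theorem~2]{B1}) it equals $E(M,f)$ and is an infinite maximal $\omega$-limit set containing $A$. Fix $x\in A$ that is not isolated in $A$. Since $A\subseteq M$ and $M$ is a cycle of graphs, the points of $A$ accumulating at $x$ approach along finitely many edges of $M$ at $x$, so there is a side $T$ of $x$ in $M$ (with arcs chosen free of branching points of $G$) such that every relative $T$-sided neighbourhood $V_T(x)\subseteq M$ of $x$ meets $A$, and hence contains infinitely many terms $y_i$ of the backward branch. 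A short computation then shows that each such $V_T(x)$ is non-wandering (two such terms $y_i,y_j$ with $i<j$ give $f^{\,j-i}(V_T(x))\ni y_j\in V_T(x)$) and that $A\subseteq P_M^T(x,f)$ (given $a\in A$ and $V_T(x)$, pick $m\le 0$ with $d(y_m,a)$ small, then $i<m$ with $y_i\in V_T(x)$, so $y_m=f^{\,m-i}(y_i)\in\bigcup_{k\ge1}f^k(V_T(x))$). By the version of Lemma~\ref{lm:prolongation} for the cycle of graphs $M$, the set $P_M^T(x,f)$ is a periodic orbit, a cycle of graphs, or a solenoidal set; being infinite it is not a periodic orbit, and it cannot be a solenoidal set, since that would produce cycles in $C(A)$ of unbounded period, contrary to the present subcase; hence it is a cycle of graphs $N$ with $A\subseteq N\subseteq M$, so $N\in C(A)$ and minimality of $M$ forces $N=M$. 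Thus $P_M^T(x,f)=M$, i.e. $x\in E_S(M,f)$.

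\emph{Main obstacle.} The step that needs genuine care is the passage, in the bounded case, from the non-isolated points of $A$ to all of $A$: a priori the isolated points of $A$ could account for all but finitely many points, so one cannot simply take closures, and these points must be analysed separately — using Lemma~\ref{lem:alphaPer} and the strong invariance of $A$ to pin down their recurrence type, and then the internal chain transitivity of $A$ together with the structure of $\omega(f)\cap M$ given by Blokh's theory to place them in the same maximal $\omega$-limit set $E(M,f)$. Everything else is a routine assembly of Lemmas~\ref{lm:cycle}--\ref{lm:prolongation}, Theorem~\ref{E}, and Blokh's decomposition theorem.
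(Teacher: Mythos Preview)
Your overall strategy coincides with the paper's: split off the finite case via Lemma~\ref{lem:alphaPer}, then in the infinite case study $C(A)$ and split according to whether the periods are bounded, landing in a solenoidal maximal $\omega$-limit set or in $E_S(M,f)$ for the minimal cycle $M$. The prolongation argument and the use of Lemmas~\ref{lm:intersect_cycle}, \ref{lm:mincycle}, \ref{lm:prolongation} and Theorem~\ref{E} are exactly as in the paper.

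The one substantive discrepancy is your ``main obstacle''. You restrict to non-isolated points of $A$ because you produce the side $T$ from \emph{points of $A$} accumulating at $x$. The paper instead produces the side from the \emph{backward branch} $\{y_i\}$ accumulating at $x$, and this works for \emph{every} $x\in A$ once $A$ is infinite. Indeed, $x\in A$ means some subsequence $y_{n_k}\to x$; if infinitely many of these equal $x$ then $x$ is periodic, and Lemma~\ref{lem:alphaPer} forces $\Orb(x)$---hence $x$---to be non-isolated in $A$, so points of $A\setminus\{x\}$ (and therefore terms $y_i$) approach $x$ from a side; otherwise infinitely many $y_{n_k}\neq x$ converge to $x$ and again approach from a side. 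With this observation your entire bounded-period argument runs verbatim for all $x\in A$, and the obstacle evaporates. Your proposed workaround via internal chain transitivity and Blokh's structure theory is unnecessary and, as stated, too vague to count as a proof.

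Two minor remarks. First, the paper works with the larger set $A=\alpha(\{y_i\})\cup\{y_i\}\cup\overline{\Orb(y)}$ rather than just $\alpha(\{y_i\})$; your choice is harmless here since only the $\alpha$-limit set matters for membership in $C(\cdot)$. Second, for the existence of a maximal $\omega$-limit set containing a given one, the paper simply cites Mai--Shao \cite{MaiShao}; invoking Zorn on top of Hausdorff-closedness is correct but slightly roundabout.
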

\begin{proof}
If $\alpha(\{y_i\}_{i\leq 0})$ is a periodic orbit, then it is an $\omega$-limit set and therefore $\alpha(\{y_i\}_{i\leq 0})$ is contained in a maximal $\omega$-limit set (recall that every $\omega$-limit set of a graph map is contained in a maximal one by Mai and Shao \cite{MaiShao}). If $\alpha(\{y_i\}_{i\leq 0})$ is not a periodic orbit, then $\{y_i\}_{i\leq 0}$ has to accumulate at every point $x\in \alpha(\{y_i\}_{i\leq 0})$ from at least one side $T_x$. For every $x\in \alpha(\{y_i\}_{i\leq 0})$, the prolongation set $P^{T_x}(x)$ contains $\{y_i\}_{i\leq 0}$ and since $P^{T_x}(x)$ is a closed invariant set, $P^{T_x}(x)\supseteq \alpha(\{y_i\}_{i\leq 0})\cup \{y_i\}_{i\leq 0}\cup \overline{\Orb(y)}$. By Lemma \ref{lm:prolongation}, $P^{T_x}(x)$ is either a~cycle of graphs or a solenoidal set $Q(x)$. In the latter case, $Q(x)\supset\alpha(\{y_i\}_{i\leq 0})$ and, by results from \cite{SXL}, $\omega(f)\supset\alpha(\{y_i\}_{i\leq 0})$, therefore $\alpha(\{y_i\}_{i\leq 0})$ is contained in the $\omega$-limit set $Q_{max }=Q(x)\cap \omega(f)$. Recall that $Q_{max}$ is a maximal $\omega$-limit set by \cite[Theorem 1]{B1}. If there is no $x\in \alpha(\{y_i\}_{i\leq 0})$ such that $P^{T_x}(x)$ is a solenoidal set, then $P^{T_x}(x)$ is a cycle of graphs for every $x\in \alpha(\{y_i\}_{i\leq 0})$. The set $\alpha(\{y_i\}_{i\leq 0})$ is infinite and thus we can define the family $C(\alpha(\{y_i\}_{i\leq 0}))$. The next step of the proof depends on whether the periods of cycles of graphs in $C(\alpha(\{y_i\}_{i\leq 0}))$ are bounded or unbounded.

We show that if the periods of cycles of graphs in $C(\alpha(\{y_i\}_{i\leq 0}))$ are unbounded then there is a sequence of cycles of graphs $\{X_i\}_{i=1}^{\infty}$ with strictly increasing periods generating a solenoidal set $Q\supset \alpha(\{y_i\}_{i\leq 0})$  and therefore $\alpha(\{y_i\}_{i\leq 0})$ is again contained in a maximal solenoidal set $Q_{max }=Q\cap \omega(f)$. By the assumption there exists a sequence $\{Y_n\}_{n=1}^{\infty}$ of cycles of graphs in $C(\alpha(\{y_i\}_{i\leq 0}))$ with strictly increasing periods $\{l_n\}_{n=1}^{\infty}.$ We define inductively a sequence $\{Y'_n\}_{n=1}^{\infty}$ as follows. Let $Y'_1=Y_1$. If $Y'_n$ is already defined then, according to Lemma \ref{lm:intersect_cycle}, there exists a $l'_{n+1}$-cycle of graphs $Y'_{n+1}$ such that $\alpha(\{y_i\}_{i\leq 0})\subseteq Y'_{n+1}\subseteq Y'_n\cap Y_{n+1}$ and $l'_{n+1}\geq l_{n+1}$. Finally choose a subsequence $\{n_i\}_{i=1}^{\infty}$ such that $l'_{n_i+1}> l'_{n_i}$ for all $i\geq 1$ and set $X_i:=Y'_{n_i}.$

 If the periods of cycles of graphs in $C(\alpha(\{y_i\}_{i\leq 0}))$ are bounded then by Lemma \ref{lm:mincycle} there exists an element $X\in C(\alpha(\{y_i\}_{i\leq 0}))$ such that $X\subseteq Y$ for every $Y\in C(\alpha(\{y_i\}_{i\leq 0}))$. Fix $x\in \alpha(\{y_i\}_{i\leq 0})$. We assumed that $P^{T_x}(x)\in C(\alpha(\{y_i\}_{i\leq 0}))$ and thus $P^{T_x}(x)\supset X$. We will show that the prolongation set $P^{T_x}(x)$ coincides with $P_{X}^{T_x}(x)$ and in consequence $P_{X}^{T_x}(x)=X$. Since $X\supseteq \alpha(\{y_i\}_{i\leq 0})$ and $\alpha(\{y_i\}_{i\leq 0})$ is infinite, $\Int(X)\cap\alpha(\{y_i\}_{i\leq 0})$ is nonempty. It follows that $\{y_i\}_{i\leq 0}\cap X$ is infinite and thus $\{y_i\}_{i\leq 0}\subset X$. Therefore $X$ contains the $T_x$-sided neigborhood of $x$ and $P^{T_x}(x)=P_{X}^{T_x}(x)=X$. By Theorem \ref{E} the set $E_S(X,f)$ is finite iff it is a periodic orbit. But we have just showed that $\alpha(\{y_i\}_{i\leq 0})\subseteq E_S(X,f)$ and $\alpha(\{y_i\}_{i\leq 0})$ is not a periodic orbit by the assumption. Therefore $E_S(X,f)$ is an infinite set and,  by Theorem \ref{E}, it is a maximal $\omega$-limit set.
\end{proof}

For any of the above-mentioned infinite maximal $\omega$-limit sets we can find a model with which the $\omega$-limit set is almost conjugated and this almost conjugacy is unique up to the homeomorphism. For basic sets, the model is a mixing map of a cycle of graphs as described in Corollary \ref{cor:modelBS}. 
\begin{defn}\label{df:ac}
Let $f\colon X\rightarrow X$ and $g\colon Y\rightarrow Y$ be two continuous maps of compact metric spaces $X,Y$ and $K\subseteq X$ be a closed invariant set. A continuous surjection $\phi\colon  X\rightarrow Y$ is an almost conjugacy between $f|_K$ and $g$ if $\phi \circ f=g \circ \phi$ and
\begin{enumerate}
\item $\phi(K)=Y,$
\item $\forall y\in Y, \phi^{-1}(y)$ is connected,
\item $\forall y\in Y, \phi^{-1}(y)\cap K=\partial \phi^{-1}(y)$, where $\partial A$ denotes the boundary of $A$.
\end{enumerate}
\end{defn}
If $X,Y$ are graphs or cycles of graphs, the conditions (2) and (3) imply $ \phi^{-1}(y)\cap K$ is a set of endpoints of a subgraph of $X$ and hence a finite set, for every $y\in Y$.
\begin{lem}\label{lem:almostC}
Let $f\colon X\rightarrow X$ and $g\colon Y\rightarrow Y$ be two continuous maps of cycles of graphs $X,Y$ and $K\subseteq X$ be a closed invariant set. If there is an almost conjugacy $\phi$ between $f|_K$ and $g$, then $\phi$ is unique up to the homeomorphism. 
\end{lem}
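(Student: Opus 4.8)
The plan is to prove the sharper statement that any two almost conjugacies $\phi\colon X\to Y$ and $\psi\colon X\to Z$ between $f|_K$ and continuous maps $g\colon Y\to Y$, $g'\colon Z\to Z$ have exactly the same fibers, that is, $\phi(x)=\phi(x')$ if and only if $\psi(x)=\psi(x')$ for all $x,x'\in X$. Granting this, the assignment $h(\phi(x))=\psi(x)$ defines a bijection $h\colon Y\to Z$; since $\phi$ and $\psi$ are continuous surjections from a compact space onto a Hausdorff space they are closed, hence quotient maps, so $h=\psi\circ\phi^{-1}$ and $h^{-1}=\phi\circ\psi^{-1}$ are continuous and $h$ is a homeomorphism. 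Finally $h\circ g=g'\circ h$ follows from $\phi\circ f=g\circ\phi$, $\psi\circ f=g'\circ\psi$ and surjectivity of $\phi$, so $h$ is the required conjugacy.

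First I would record the structure of the fibers of a single almost conjugacy $\phi$. Since $X$ has no isolated points, conditions (2) and (3) force each fiber $\phi^{-1}(y)$ to be either a singleton contained in $K$, or a non-degenerate subgraph $F$ with $F\cap K=\partial_X F$ a finite set and $\mathrm{int}_X F=F\setminus K$. The crucial step is that $\phi$ is constant on every connected component $W$ of $X\setminus K$. If not, $\phi(\overline W)$ is a non-degenerate subcontinuum, hence uncountable, while $\phi(\overline W\setminus W)\subseteq\phi(\partial_X W)$ is finite; so for each $c$ in the uncountable set $\phi(\overline W)\setminus\phi(\partial_X W)$ the fiber $\phi^{-1}(c)$ meets $W$, hence contains a point outside $K$, which makes $\phi^{-1}(c)$ a non-degenerate subgraph with non-empty interior. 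These interiors are pairwise disjoint non-empty open subsets of $X$, contradicting second countability. Hence $\phi|_W$ is constant; since $\phi(\overline W)$ is connected and equal to $\{\phi(W)\}\cup\phi(\partial_X W)$ with $\phi(\partial_X W)$ finite and non-empty, $\phi(\overline W)$ is a single point. It follows that each non-degenerate fiber is a connected finite union $\overline{W_1}\cup\dots\cup\overline{W_m}$ of closures of components of $X\setminus K$, so the $\overline{W_i}$ are chained together through points of $K$; and the singleton fibers of $\phi$ are precisely the points of $\mathrm{int}_X K$.

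With this description I would compare $\phi$ and $\psi$. Their singleton fibers both equal $\mathrm{int}_X K$, hence coincide. If $\overline W$ and $\overline{W'}$ lie in a common $\phi$-fiber $F=\overline{W_1}\cup\dots\cup\overline{W_m}$, reorder so that consecutive closures meet; each such common point lies in $K$, hence in both $\psi^{-1}(\psi(W_i))$ and $\psi^{-1}(\psi(W_{i+1}))$, forcing these $\psi$-fibers to be equal, so $\overline W$ and $\overline{W'}$ lie in a common $\psi$-fiber. Thus every $\phi$-fiber is contained in a $\psi$-fiber, and by symmetry every $\psi$-fiber is contained in a $\phi$-fiber; therefore the fibers coincide, which is what was needed.

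I expect the main obstacle to be the step showing $\phi$ is constant on the components of $X\setminus K$, as this is where condition (3) is genuinely used and where the cardinality contradiction must be arranged carefully (one needs a genuinely uncountable family of values $c$ whose fibers have pairwise disjoint non-empty interiors). A further technical point is checking that a non-degenerate fiber really is the union of the closures of the components of $X\setminus K$ it contains: one must verify that each of its finitely many boundary points, which lies in $K$, is a limit of interior points of the fiber, which holds because the fiber is connected, non-degenerate, and has only finitely many boundary points.
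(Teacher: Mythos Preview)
Your proof is correct and takes a genuinely different route from the paper. The paper argues directly: given two almost conjugacies $\phi_1,\phi_2$, if their fibres through some $x$ differ then without loss of generality $\phi_1$ is non-constant on $\Int\phi_2^{-1}(\phi_2(x))$; picking $y,z$ there with $\phi_1(y)\neq\phi_1(z)$, the boundary $\partial\phi_1^{-1}(\phi_1(y))$ must meet $\Int\phi_2^{-1}(\phi_2(x))$, which is impossible since the former lies in $K$ (condition~(3) for $\phi_1$) while the latter is disjoint from $K$ (condition~(3) for $\phi_2$). This is essentially a two-line contradiction. Your approach instead develops an \emph{intrinsic} description of the fibres of a single almost conjugacy purely in terms of $K$: singleton fibres are exactly the points of $\Int_X K$, and every non-degenerate fibre is a finite connected union of closures of components of $X\setminus K$. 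Since this description never mentions $\phi$, any two almost conjugacies automatically have identical fibres. Your route is longer and trades the paper's direct boundary/interior clash for a cardinality argument, but it buys genuine structural information about the fibres --- essentially the same facts that are invoked later in Lemma~\ref{lem:P} and Fact~\ref{rem:alpha}.

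One small repair: when you write ``reorder so that consecutive closures meet'' you are tacitly assuming a Hamiltonian path in the intersection graph of the $\overline{W_i}$, which need not exist (think of one $\overline{W_0}$ meeting each of $\overline{W_1},\overline{W_2},\overline{W_3}$, with the latter three pairwise disjoint). What connectedness of the fibre actually gives is that this intersection graph is connected; since $\psi$ is constant on each $\overline{W_i}$ and any two overlapping closures share a point, the constant values agree along edges of that graph, and connectedness propagates the equality. Phrased this way the argument goes through unchanged. (A related cosmetic point: in ``each such common point lies in $K$, hence in both $\psi^{-1}(\psi(W_i))$ and $\psi^{-1}(\psi(W_{i+1}))$'', the relevant reason is that the point lies in $\overline{W_i}\cap\overline{W_{i+1}}$, not that it lies in $K$.)
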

\begin{proof}
Let $x$ be an arbitrary point from $X$ and $\phi_1$ and $\phi_2$ be almost conjugacies between $f|_K$ and $g$. Then $\phi_1^{-1}(\phi_1(x))$ (respectively, $\phi_2^{-1}(\phi_2(x)))$ is a connected set containing $x$. We will show that $\phi_1^{-1}(\phi_1(x))\equiv \phi_2^{-1}(\phi_2(x))$. Assume the contrary. Then $\phi_1$ is not a constant function on $\Int(\phi_2^{-1}(\phi_2(x)))$ or $\phi_2$ is not a constant function on $\Int(\phi_1^{-1}(\phi_1(x)))$. We can assume the first case without loss of generality. Then there are $y,z\in\Int(\phi_2^{-1}(\phi_2(x)))$ such that $\phi_1(y)\neq\phi_1(z)$. Since $\phi_1^{-1}(\phi_1(y))$ does not contain $z$, we have $\partial\phi_1^{-1}(\phi_1(y))\cap \Int(\phi_2^{-1}(\phi_2(x)))\neq \emptyset$. This is impossible since, by (3) from Definition \ref{df:ac}, $\partial \phi_1^{-1}(\phi_1(y))\subset K$ and $\Int (\phi_2^{-1}(\phi_2(x)))\cap K=\emptyset$.\\
By \cite[Corollary 22.3]{Munk}, the quotient spaces $\phi_1(X)$ and $\phi_2(X)$ are homeomorphic.

\end{proof}

\begin{thm}\cite{B1,SR}\label{thm:modelE}
Let $f\colon G\rightarrow G$ be a graph map and $X\subseteq G$ be a cycle of graphs. Suppose that $E(X,f)$ is infinite. Then there is a transitive map $g\colon Y\rightarrow Y$, where $Y$ is a cycle of graphs, and $\phi\colon X\rightarrow Y$ which almost conjugates $f|_{E(X,f)}$ and $g$.
\end{thm}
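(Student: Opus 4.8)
The plan is to produce $Y$ as a monotone quotient of $X$, obtained by collapsing the complementary components of $E(X,f)$ together with the larger subcontinua that the dynamics forces one to collapse, and then to read off transitivity of the quotient map from the defining property of $E(X,f)$. First dispose of the case $E(X,f)=X$: then $P_X(x,f)=X$ for every $x\in X$, so $\bigcup_{i\ge 1}f^i(U)$ is dense in $X$ for every nonempty open $U\subseteq X$, hence $f$ is transitive on $X$, and we may take $Y=X$, $g=f$, $\phi=\mathrm{id}_X$; conditions (1)--(3) of Definition \ref{df:ac} hold trivially since $K=X$ forces every fibre to be a singleton.

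Assume now $D:=E(X,f)\subsetneq X$. The components of $X\setminus D$ (the \emph{gaps}) are open arcs or open subgraphs, each with boundary contained in $D$ together with finitely many branch points of $X$; since $D$ is infinite and strongly invariant and, by Remark \ref{rm:mincyc}, $X$ is the minimal cycle of graphs containing $D$, the set $D$ meets every component of $X$. Let $\sim$ be the smallest closed $f$-invariant equivalence relation on $X$ that identifies any two points lying in the closure of one gap, set $Y:=X/\!\sim$, let $\phi\colon X\to Y$ be the quotient projection, and let $g\colon Y\to Y$ be the map induced by $f$ (well defined because $\sim$ is $f$-invariant, continuous because $\sim$ is closed and $X$ is compact metric). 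Then $\phi\circ f=g\circ\phi$ by construction, and each fibre $\phi^{-1}(y)$ meets $D$ (it is assembled from closures of gaps, whose boundaries lie in $D$, and from points of $D$), so $\phi(D)=Y$: this is condition~(1).

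The technical core, which is Blokh's analysis in \cite{B1} reformulated in \cite{SR}, is to establish the structural facts that (i) each class of $\sim$ is a subgraph $C$ of $X$ whose interior relative to $X$ is disjoint from $D$, so that $C\cap D=\partial C$ is finite, and (ii) distinct classes lie in distinct components of the cycle, so that $Y$ is again a cycle of graphs of the same period. Item~(i) yields conditions~(2) and~(3) of Definition \ref{df:ac} and shows that $Y$ retains the one-dimensional branched structure of $X$. I expect (i) to be the main obstacle: collapsing one gap can, through the iteration of $f$, force the identification of points far apart and hence the collapse of a strictly larger continuum, and one must show that this larger continuum never engulfs an open arc of $D$. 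This requires the fine structure of a basic or circumferential set that Blokh develops --- the density of periodic points in $D_{\min}=D\cap\overline{\Per(f)}$, the surjectivity of $f|_D$, the minimality of the cycle of graphs $X$ around $D$ --- rather than a soft argument.

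Granting the construction, transitivity of $g$ is short. Let $V,W\subseteq Y$ be nonempty open sets. Since $\phi(D)=Y$ there is $x\in D$ with $\phi(x)\in V$, so $U:=\phi^{-1}(V)$ is a neighbourhood of $x$ in $X$; as $x\in E(X,f)$ we have $P_X(x,f)=X$, whence $\overline{\bigcup_{i\ge 1}f^i(U)}=X$. Applying the continuous surjection $\phi$ and using $\phi(U)=V$ gives that $\bigcup_{i\ge 1}g^i(V)$ is dense in $Y$, so it meets $W$, i.e.\ $g^i(V)\cap W\ne\emptyset$ for some $i\ge 1$; thus $g$ is transitive. (The resulting almost conjugacy is unique up to homeomorphism by Lemma \ref{lem:almostC}, though that is not part of the present statement.)
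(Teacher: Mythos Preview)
The paper does not prove this theorem; it is quoted from \cite{B1,SR} and used as a black box. Your outline is a faithful description of how the result is established in those references: one forms a monotone quotient of $X$ by collapsing the closures of the components of $X\setminus E(X,f)$ (together with whatever further identifications $f$-invariance and closedness force), and then transitivity of the induced map drops out of the definition of $E(X,f)$. Your argument for transitivity of $g$ is correct and cleanly written.

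However, what you have produced is a plan rather than a proof. You explicitly ``grant the construction'' and defer item~(i) --- the assertion that each $\sim$-class is a subgraph whose interior misses $D$ --- and this is precisely where all the content lies. Without it you do not know that $Y$ is a nondegenerate cycle of graphs (a priori your smallest closed $f$-invariant relation could collapse each component of $X$ to a point), nor that conditions (2) and (3) of Definition~\ref{df:ac} hold. Blokh's argument for this step uses the fine structure of $E(X,f)$: in particular Theorem~\ref{E} (that $E_S(X,f)=E(X,f)$ when the latter is infinite, so every point of $D$ has a one-sided neighbourhood whose orbit is dense in $X$) and, in the basic-set case, the density of periodic points. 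It is not a formality, and you correctly flag it as such. In sum, your write-up sits at the same level of detail as the paper's own treatment --- a pointer to the literature --- rather than an independent argument.
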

A transitive graph map is either totally transitive or it can be decomposed into a totally transitive one. 
\begin{thm}\cite{HKO}\label{thm:trans_decomp}
Let $f\colon G\rightarrow G$ be a transitive graph map. Then exactly one of the following statements holds.
\begin{enumerate}
\item $f$ is totally transitive,
\item there is $k>1$ and non-degenerate subgraphs $G_0,\ldots, G_{k-1}$ of $G$ such that
\begin{enumerate}
\item $G=\cup_{i=0}^{k-1}G_i$,
\item $G_i\cap G_j=End(G_i)\cap End(G_j)$, for $i\neq j$,
\item $f(G_i)=G_{i+1 \mod k}$, for $i=0,\ldots,k-1$,
\item $f^k|_{G_i}$ is totally transitive, for $i=0,\ldots,k-1$.

\end{enumerate}
\end{enumerate}
\end{thm}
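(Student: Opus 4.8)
The plan is to treat the totally transitive case as immediate and, in the remaining case, to obtain the cyclic structure from the general theory of \emph{regular periodic decompositions} of transitive maps; the genuinely graph-theoretic work is then to show that this decomposition has finite length and is built from subgraphs. Throughout I would use that a transitive map of a compact space is surjective and that the forward orbit of every nonempty open set is dense.

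If $f$ is totally transitive we are in alternative (1), and the two alternatives are mutually exclusive, since a length-$k\ge 2$ cyclic decomposition makes $f^k$ non-transitive (its pieces are proper, closed, and $f^k$-invariant with nonempty interior). So assume $f$ is not totally transitive. By the theory of regular periodic decompositions for transitive maps (Banks), there is a number $N\in\N\cup\{\infty\}$ such that $f$ admits a regular periodic decomposition — closed sets $D_0,\dots,D_{n-1}$ with pairwise disjoint interiors, $\bigcup_i D_i=G$, and $f(D_i)\subseteq D_{(i+1)\bmod n}$ — precisely for those $n$ dividing $N$, and, when $N$ is finite, $f^{N}$ restricted to each $D_i$ is totally transitive. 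Since $f$ is not totally transitive we have $N\ge 2$.

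The step I expect to be the main obstacle is verifying the two graph-specific facts. First, $N$ is finite: a transitive graph map of zero topological entropy is conjugate to an irrational rotation of a circle, hence totally transitive, which is excluded here; and if $\htop(f)>0$ then, in the language of the previous section, $(G,f)$ is a basic set (indeed $G=E(G,f)=D(G,f)$, as $f$ is surjective and its transitive points are dense in $G$), and Blokh's structure theory of basic sets attaches to it a finite intrinsic cyclic period, so the admissible lengths $n$ are bounded. Second, the pieces $D_i$ of the length-$N$ decomposition are non-degenerate subgraphs: realizing $D_i$ concretely as a closure of an $f$-orbit and using that $G$ has only finitely many branching points — so that a subcontinuum of $G$ is again a graph and such a set has finite boundary — one checks that $D_i$ is a finite union of arcs with nonempty interior. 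Controlling the boundaries of the $D_i$, and pinning down the finiteness of $N$ rigorously, is where the structure theory of graph maps is really needed.

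Granting these two points, the theorem follows with $k:=N$ and $G_i:=D_i$. Condition (a) is the equality $\bigcup_i D_i=G$; condition (c) is the inclusion $f(D_i)\subseteq D_{(i+1)\bmod k}$, which is an equality because $f$ is transitive, hence surjective, and the $D_i$ cover $G$; condition (b) holds because $D_i\cap D_j$ has empty interior for $i\ne j$ while each $D_i$ is a subgraph, so the intersection is a finite set of boundary points, hence contained in $End(D_i)\cap End(D_j)$; and condition (d), the total transitivity of $f^k|_{G_i}$, is exactly the concluding clause of the regular periodic decomposition theorem applied to the maximal length $N$ (a further nontrivial cyclic decomposition of $f^N|_{D_0}$ would combine with the length-$N$ cycle of $f$ into a regular periodic decomposition of $f$ of length a proper multiple of $N$, contradicting maximality). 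Therefore exactly one of (1) and (2) holds.
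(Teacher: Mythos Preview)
The paper does not prove this theorem: it is stated with the citation \cite{HKO} and no proof is given, so there is nothing in the paper to compare your argument against. Your route via Banks' regular periodic decompositions is a reasonable outline, and the two points you flag as needing graph-specific input (finiteness of the maximal period and that the pieces are genuine subgraphs meeting only in endpoints) are indeed where the real work lies; but since the present paper simply imports the result from \cite{HKO}, any comparison of approaches would have to be made against that reference rather than against this paper.
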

It follows from \cite[Corollary 4.3, Theorem 3.2]{HKO2} that every totally transitive map acting on a graph $G$ where $G$ is not the circle is mixing.
Let $\mathbb{S}$ be the unit circle. If $f$ acting on $\mathbb{S}$ is totally transitive and sensitive then $f$ is mixing by \cite[Theorem 4.2, Theorem 3.2]{HKO2}. If it is totally transitive and not sensitive then $f$ is a transitive almost equicontinuous map, by Auslander-Yorke Dichotomy \cite{AY}, and therefore $f$ is a homeomorphism by \cite{AAB}. \\
Since a basic set $D(X)$ contains a periodic orbit, the transitive model map $g$ with which $D(X)$ is almost conjugated contains a periodic orbit as well. By \cite[Proposition 11.1.4, Proposition 11.2.2]{KH}, a homeomorphism acting on $\mathbb{S}$ possessing periodic points is never transitive. Therefore $g$ is not a homeomorphism of the circle and, by the reasoning above, if $g$ is totally transitive then $g$ is mixing. This fact together with Theorem \ref{thm:modelE} and Theorem \ref {thm:trans_decomp} implies the following corollary.
\begin{cor}\label{cor:modelBS}
Let $f\colon G\rightarrow G$ be a graph map and $X\subseteq G$ be a cycle of graphs. Suppose that $D(X)$ is a basic set. Then there is a transitive map $g\colon Y\rightarrow Y$, where $Y$ is a cycle of graphs $Y_0,\ldots Y_{n-1}$ with possibly non-empty intersection in the endpoints, and $\phi\colon X\rightarrow Y$ which almost conjugates $f|_{D(X)}$ and $g$. Moreover, $g^n|Y_i$ is mixing, for $i=0,\ldots,n-1$. The period $n$ of $Y$ is a multiple of the period of $X$ and $Y_i\cap Y_j=End(Y_i)\cap End(Y_j)\neq \emptyset$ iff $i\neq j$ and $i$ and $j$ are congruent modulo the period of $X$.

\end{cor}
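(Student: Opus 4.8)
The plan is to stitch together the structural results just quoted. First I would apply Theorem~\ref{thm:modelE} to the cycle of graphs $X$ (its hypothesis is met since $D(X)=E(X,f)$ is infinite), obtaining a transitive map $g\colon Y\to Y$ on a cycle of graphs $Y$ and an almost conjugacy $\phi\colon X\to Y$ between $f|_{D(X)}$ and $g$. Since $D(X)$ is a \emph{basic} set it contains a periodic orbit $\Orb(p)$, and from $\phi\circ f=g\circ\phi$ the point $\phi(p)$ is periodic for $g$, so $\Per(g)\neq\emptyset$. Next I would run the dichotomy of Theorem~\ref{thm:trans_decomp} on $g$: either $g$ is totally transitive, in which case $n=1$ and $Y_0=Y$, or there are subgraphs $Y_0,\dots,Y_{n-1}$ with $Y=\bigcup_i Y_i$, $Y_i\cap Y_j=End(Y_i)\cap End(Y_j)$ for $i\neq j$, $g(Y_i)=Y_{i+1\bmod n}$ and $g^n|_{Y_i}$ totally transitive. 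As $g$ permutes $Y_0,\dots,Y_{n-1}$ cyclically and the finite set $\Orb_g(\phi(p))$ is $g$-invariant, it meets every $Y_i$; picking a point of it in $Y_i$ gives, since $Y_i$ is $g^n$-invariant, a periodic point of $g^n|_{Y_i}$. Hence each $g^n|_{Y_i}$ is a totally transitive graph map possessing a periodic point, and feeding this into the facts recalled just before the statement — a totally transitive graph map on a non-circle graph is mixing, a totally transitive circle map is mixing or a homeomorphism, and a circle homeomorphism with a periodic point is not transitive — yields that $g^n|_{Y_i}$ is mixing for every $i$ (and, when $n=1$, that $g$ itself is mixing).

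The second step is to transport the cyclic structure of $X$ through $\phi$. Write $m$ for the period of $X$ and $X=X_0\sqcup\dots\sqcup X_{m-1}$ for its components (pairwise disjoint, hence clopen in $X$), with $f(X_s)=X_{s+1\bmod m}$. Since every fibre $\phi^{-1}(y)$ is connected it cannot meet two distinct components of $X$, so the closed connected sets $\phi(X_0),\dots,\phi(X_{m-1})$ are pairwise disjoint; their union is $Y$, so each is clopen in $Y$ (and, being nondegenerate, a subgraph), and $g(\phi(X_s))=\phi(X_{s+1\bmod m})$. Consequently each connected piece $Y_i$ lies in a single block $\phi(X_{\rho(i)})$; comparing $g(Y_i)=Y_{i+1\bmod n}$ with $g(\phi(X_{\rho(i)}))=\phi(X_{\rho(i)+1\bmod m})$ forces $\rho(i+1)\equiv\rho(i)+1\pmod m$, while $g^n(Y_i)=Y_i$ together with the disjointness of the blocks forces $\rho(i)\equiv\rho(i)+n\pmod m$, so $m\mid n$ and, after relabelling, $Y_i\subset\phi(X_{i\bmod m})$. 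In particular pieces lying in different blocks are disjoint, which is one half of the intersection claim. For the other half I would use that $g^m$ preserves each block, that $g^m|_{\phi(X_s)}$ is transitive (a dense $g$-orbit restricts to a dense $g^m$-orbit on the clopen block), and that the $n/m$ pieces inside $\phi(X_s)$ are cyclically permuted by $g^m$ with $(g^m)^{n/m}$ totally transitive on each; the cyclic symmetry together with the connectedness of the block then forces these pieces to overlap pairwise, necessarily along common endpoints by Theorem~\ref{thm:trans_decomp}.

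I expect the main obstacle to be precisely this combinatorial bookkeeping in the second step: translating ``$X$ has period $m$'' and the identifications made by $\phi$ into the exact pattern $Y_i\cap Y_j\neq\emptyset$ if and only if $i\neq j$ and $i\equiv j\pmod m$, and in particular certifying that two pieces in a common block genuinely meet (along endpoints) rather than merely being linked through a chain of intermediate pieces — here one really has to exploit the connectedness of $\phi(X_s)$ and the cyclic symmetry of the $g^m$-action on its pieces. The remaining assertions follow by a fairly mechanical concatenation of Theorem~\ref{thm:modelE}, Theorem~\ref{thm:trans_decomp} and the circle versus non-circle transitivity facts recalled before the statement.
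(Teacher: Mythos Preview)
Your approach is essentially the same as the paper's: the paper deduces the corollary directly from Theorem~\ref{thm:modelE}, Theorem~\ref{thm:trans_decomp}, and the discussion immediately preceding the statement (totally transitive non-circle graph maps are mixing; totally transitive circle maps are mixing or homeomorphisms; circle homeomorphisms with periodic points are not transitive), exactly as in your first paragraph. The paper does not spell out the bookkeeping in your second paragraph --- the claims that $m\mid n$ and that $Y_i\cap Y_j\neq\emptyset$ precisely when $i\equiv j\pmod m$ are simply asserted as part of the corollary without a separate argument --- so your attempt to justify them, and your honest identification of the delicate point (that any two pieces within a single block $\phi(X_s)$ actually meet, not merely that the block is connected), goes beyond what the paper provides.
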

The next lemma will help us to transfer some constructions from the model space $Y$ to the basic set $D(X)$ later in Section \ref{sec:PosEnt}.

	\begin{lem}\label{lem:P} Let $X, f, D(X),\phi, g, Y$ be as in  Corollary \ref{cor:modelBS} and let $\delta>0$. Then there is a finite set $P(\delta)=\{p_1,\ldots,p_k\}\subset Y$ and $\gamma>0$ such that, for every neigborhood $U(x)$ of any point $x\in Y\setminus P(\delta)$ with $\diam(U(x))\leq \gamma$ and $U(x)\cap P(\delta)=\emptyset$, we have $\diam(\phi^{-1}(U(x)))<\delta$.
	\end{lem}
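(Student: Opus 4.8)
The plan is to extract the set $P(\delta)$ from the (finitely many) fibers of $\phi$ that are non-degenerate and of diameter at least $\delta$, and then use compactness together with uniform continuity of $\phi$ to produce $\gamma$. First I would observe that since $\phi\colon X\to Y$ is an almost conjugacy of cycles of graphs, every fiber $\phi^{-1}(y)$ is a connected subgraph whose boundary lies in $D(X)$; in particular distinct non-degenerate fibers are pairwise disjoint compact connected sets, and each of them contains, in its complement in $X$ no other point of the same fiber. The key finiteness input is that only finitely many fibers can have diameter $\geq\delta/2$: indeed, if infinitely many non-degenerate fibers $\phi^{-1}(y_n)$ had diameter $\geq \delta/2$, then by compactness of $X$ they would accumulate, forcing two of them to intersect (or to be nested inside a common arc in a way contradicting disjointness and the graph structure), which is impossible. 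Hence let $y_1,\dots,y_k\in Y$ be exactly those points whose fibers have diameter $\geq\delta/2$, and set $P(\delta)=\{y_1,\dots,y_k\}$.

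Next I would fix a radius $\rho>0$ small enough that the closed $\rho$-balls around the $y_i$ in $Y$ are pairwise disjoint, and consider the compact set $K=Y\setminus \bigcup_{i=1}^k B(y_i,\rho)$. For every $x\in K$ the fiber $\phi^{-1}(x)$ has diameter $<\delta/2$. I claim there is $\gamma_0>0$ such that for every $x\in K$ and every neighborhood $U(x)$ with $\diam U(x)\le\gamma_0$ we have $\diam\phi^{-1}(U(x))<\delta$. This is a routine compactness/uniform–continuity argument: if it failed there would be points $x_n\in K$, neighborhoods $U_n\ni x_n$ with $\diam U_n\to 0$, and points $a_n,b_n\in\phi^{-1}(U_n)$ with $d(a_n,b_n)\ge\delta$; passing to subsequences, $x_n\to x\in K$, $U_n$ shrinks to $\{x\}$, and $a_n\to a$, $b_n\to b$ with $\phi(a)=\phi(b)=x$ and $d(a,b)\ge\delta$, contradicting $\diam\phi^{-1}(x)<\delta/2$. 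Here one uses continuity of $\phi$ to pass $\phi(a_n)\to x$ to $\phi(a)=x$.

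Finally I would combine the two estimates. Given any $x\in Y\setminus P(\delta)$ and a neighborhood $U(x)$ with $U(x)\cap P(\delta)=\emptyset$ and $\diam U(x)\le\gamma:=\min\{\gamma_0,\rho\}$: either $U(x)\subset K$, in which case $\diam\phi^{-1}(U(x))<\delta$ by the previous paragraph; or $U(x)$ meets some $B(y_i,\rho)$, but since $\diam U(x)\le\rho$ and $U(x)$ does not contain $y_i$, the set $U(x)$ still lies within distance $2\rho$ of $y_i$ — so one should simply choose $\rho$ at the outset so small that the closed $2\rho$-ball around each $y_i$ still contains no other $y_j$ and, more importantly, re-run the uniform continuity argument on the slightly larger compact set $Y\setminus\bigcup_i B(y_i,2\rho)$ while noting that a neighborhood of diameter $\le\rho$ disjoint from $\{y_i\}$ but meeting $B(y_i,\rho)$ is contained in the annulus $B(y_i,2\rho)\setminus\{y_i\}$, on which all fibers have diameter $<\delta/2$ as well; shrinking $\gamma$ further via the same compactness argument applied to each punctured annulus handles this case. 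Taking $\gamma$ to be the minimum of all the finitely many constants produced gives the claim.

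The main obstacle is the second case above: a small neighborhood $U(x)$ that stays clear of the points $p_i$ but nevertheless creeps close to one of them, where the fibers of $\phi$ are large. The honest fix is to note that largeness of $\diam\phi^{-1}(y)$ is concentrated exactly at the $p_i$ and is controlled on any compact set bounded away from them — including the punctured annuli $B(p_i,2\rho)\setminus B(p_i,\rho')$ for small $\rho'$ — so one runs the uniform-continuity argument on each such compact piece and also shrinks $\gamma$ below $\rho'$; the finitely many resulting constants are then amalgamated. No serious difficulty remains once one accepts that only finitely many fibers are large and that $\phi$, being a continuous map of compacta, is uniformly continuous in the relevant sense.
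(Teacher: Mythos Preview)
Your overall strategy --- isolate the finitely many points with large fibres, then run a compactness/uniform-continuity argument on the rest --- is different from the paper's and is in principle viable, but the proof as written has a genuine gap in the ``boundary case'' that your patches do not close.

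The problem is exactly where you flag it: when the centres $x_n$ drift toward some $p_i\in P(\delta)$. Your contradiction argument produces $a_n\to a$, $b_n\to b$ with $\phi(a)=\phi(b)=\lim x_n$ and $d(a,b)\ge\delta$; if $\lim x_n=p_i$ this is no contradiction at all, since $\diam\phi^{-1}(p_i)\ge\delta/2$. Your fix via ``punctured annuli'' does not help: the punctured ball $B(p_i,2\rho)\setminus\{p_i\}$ is not compact, so you cannot run the argument there; and if you retreat to the compact annulus $\overline{B(p_i,2\rho)}\setminus B(p_i,\rho')$, you have merely pushed the problem into $B(p_i,\rho')\setminus\{p_i\}$, and iterating gives an infinite regress with no uniform $\gamma$ at the end. ``Shrinking $\gamma$ below $\rho'$'' does nothing to stop $x$ from lying inside $B(p_i,\rho')$.

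What actually rescues your approach is a fact you never invoke: because $\phi$ is monotone (fibres connected) and a closed map, preimages of \emph{connected} open sets are connected. Feeding this into your contradiction argument, the sets $\overline{\phi^{-1}(U_n)}$ are connected, disjoint from $\phi^{-1}(p_i)$, and Hausdorff-converge into the \emph{finite} set $\partial\phi^{-1}(p_i)$; a connected Hausdorff limit inside a finite set is a single point, forcing $\diam\phi^{-1}(U_n)\to 0$, the desired contradiction. Without this connectedness step (or some substitute), the argument is incomplete. The paper takes a quite different route: it uses the graph-specific fact $\sum_i\diam\phi^{-1}(p_i)<\infty$ to choose $P(\delta)$ so that the \emph{tail sum} of fibre diameters is below $\delta/8$ (not just each individual fibre), places the branching points of $Y$ into $P(\delta)$ as well, and then runs a finite-cover estimate in $X$ rather than a compactness argument in $Y$.
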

	\begin{proof}
		Since $\diam(X)$ is finite, we have at most countably many points $y\in Y$ such that $\phi^{-1}(y)$ is not a singleton and we can arrange them into a sequence $\{p_i\}_{i\geq 1}$.
		We also include as first positions in the sequence all branching points of the graph $Y$.
		Since we have also $\sum_{i=1}^{\infty}\diam((\phi^{-1}(p_i))<\infty$, there is $k\geq 1$ such that $\sum_{i=k+1}^{\infty}\diam((\phi^{-1}(p_i))<\delta/8$
		and $k$ is larger than the number of branching points, so denote $P(\delta) = \{p_1,\dots,p_k\}$. 
		Let $Z=X\setminus \bigcup_{i=1}^k \Int \phi^{-1}(p_i)$. Note that $Z$ is a finite union of graphs, in particular it is compact.
		Let $V_1,\ldots, V_s$ be an open cover of $Z$ by connected sets of diameter $\delta/8$. Let $U_i=\phi(V_i)$ for each $i=1,\dots,s$ and
		let $\gamma=\min_{1\leq i\leq s} \diam U_i$. Fix any $x\in Y\setminus P(\delta)$ with $\diam(U(x))\leq \gamma$ and $U(x)\cap P(\delta)=\emptyset$.
		Take any $p,q\in \phi^{-1}(U(x))$ and consider $\hat{p}=\phi(p), \hat{q}=\phi(q)\in U(x)$. Since $U(x)$ is connected and does not contain branching points, there is $\hat{z}\in U(x)$
		and $i,j$ such that $\hat{p},\hat{z}\in U_i$ and $\hat{q},\hat{z}\in U_j$. But then
		\begin{eqnarray*}
		d(p,q)&\leq& \diam \phi^{-1}(\hat{p})+\diam V_i+\diam \phi^{-1}(\hat{z})+\diam V_j+\diam \phi^{-1}(\hat{q})\\
		&\leq& \frac{5\delta}{8}<\delta.
		\end{eqnarray*}		
The proof is complete.		
	\end{proof}

\section{Zero entropy graph maps}\label{sec:zeroEnt}
We will show that the structure of the family of $\alpha$-limit sets of backward branches for a graph map greatly depends on the entropy of the map. In particular, for zero entropy maps the family of $\alpha$-limit sets of backward branches coincides with the family of minimal sets.
The following well-known theorem shows that graph maps with zero topological entropy do not possess basic sets.
\begin{thm}\cite{M}\label{PosEnt}
Let $f$ be a continuous graph map. Then the following conditions are equivalent:
\begin{enumerate}
\item $h(f)>0$,
\item $f$ has a basic set.
\end{enumerate}
\end{thm}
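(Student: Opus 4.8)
\smallskip
\noindent\textbf{Proof proposal.} I would establish the two implications separately; they are of quite different character. For $(2)\Rightarrow(1)$ the idea is to replace the basic set by its mixing model and use that mixing graph maps have positive entropy. For $(1)\Rightarrow(2)$ the idea is to extract a horseshoe from positive entropy and turn it into a basic set.

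\smallskip
For $(2)\Rightarrow(1)$, suppose $f$ has a basic set, which by Blokh's decomposition has the form $D(X)$ for a cycle of graphs $X$. By Corollary \ref{cor:modelBS} there is a transitive map $g\colon Y\to Y$ on a cycle of graphs $Y=Y_0\cup\cdots\cup Y_{n-1}$ with each $g^n|_{Y_i}$ mixing, and an almost conjugacy $\phi\colon X\to Y$ between $f|_{D(X)}$ and $g$. Since $\phi(D(X))=Y$ and $\phi\circ f=g\circ\phi$, the restriction $\phi|_{D(X)}$ is a topological factor map from $(D(X),f)$ onto $(Y,g)$, so $h(f)\geq h(f|_{D(X)})\geq h(g)\geq\tfrac1n\,h(g^n|_{Y_0})$. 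It therefore suffices to observe that a mixing graph map has positive entropy, and this is already built into the Bowen-ball estimates of Section \ref{sec:mixing}: by Lemma \ref{lem:10.4} the connected Bowen ball $B'_n(x,\eps)$ lies in an arbitrarily small ball about $x$ once $n$ is large, while by Lemma \ref{lem:10.5} its $f^n$-image has diameter at least $\eta>0$; choosing two such small balls and covering them by the $f^n$-image of one of them yields a horseshoe for $g^n|_{Y_0}$, whence $h(g^n|_{Y_0})>0$ and $h(f)>0$.

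\smallskip
For $(1)\Rightarrow(2)$, assume $h(f)>0$. The principal input is the horseshoe theorem for graph maps (see \cite{M}, the graph analogue of Misiurewicz's interval result): there exist $k\geq1$ and disjoint compact arcs $I_0,I_1\subset G$ with $f^k(I_j)\supseteq I_0\cup I_1$ for $j=0,1$. From this I would build a basic set as follows. The covering relations give a fixed point $p\in I_0$ of $f^k$, so $\Per(f)$ meets $\Orb(I_0\cup I_1)$; and the closure of $\Orb(I_0\cup I_1)$, after passing to a suitable component and applying Lemma \ref{lm:cycle}, yields a non-degenerate cycle of graphs $M$ with $\Per(f)\cap M\neq\emptyset$ that contains the infinite invariant set coming from the horseshoe. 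On such an $M$, the expansion supplied by the horseshoe forces $P_M(x,f)=M$ for every $x$ whose $f^k$-orbit is dense in the horseshoe subsystem, so $E(M,f)$ is infinite, and then $D(M,f)=E(M,f)$ is a basic set by Theorem \ref{E}. Equivalently, one may pick a single point $x$ whose $f^k$-itinerary with respect to $(I_0,I_1)$ lists every finite $\{0,1\}$-word and can be arranged so that $\omega_f(x)$ is infinite and contains $\Orb(p)$; by Mai--Shao \cite{MaiShao} this infinite $\omega$-limit set lies in a maximal one, which then contains a periodic point and is a basic set.

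\smallskip
The hard part will be $(1)\Rightarrow(2)$. Deducing a horseshoe for some power of $f$ from positive entropy is itself a genuine theorem for graph maps, not a formality as on the interval; and once a horseshoe is at hand, turning it into a basic set requires care with the symbolic bookkeeping---the coding onto the full two-shift may have non-degenerate fibres and may misbehave on $\partial I_0\cup\partial I_1$, so passing from periodic symbol sequences to genuine periodic points of $f$, identifying the cycle of graphs $M$, and showing $E(M,f)$ is infinite, is where the real work sits (this is essentially Blokh's structure theory, which we may invoke). By contrast, the only non-routine step in $(2)\Rightarrow(1)$ is that mixing implies positive entropy, and that is within immediate reach of Lemmas \ref{lem:10.4} and \ref{lem:10.5}.
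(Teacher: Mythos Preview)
The paper does not prove this statement at all: Theorem~\ref{PosEnt} is quoted verbatim from \cite{M} (Hric--M\'alek) with the citation placed in the theorem header, and no argument is supplied. There is therefore nothing in the paper to compare your proposal against; the authors treat the equivalence as a known black box and use it only in the direction $(1)\Rightarrow(2)$ contrapositively (zero entropy implies no basic set) to feed into Theorem~\ref{thm:min}.

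Your sketch is reasonable as an outline, but two remarks are in order. First, for $(2)\Rightarrow(1)$ you are invoking Corollary~\ref{cor:modelBS} and Lemmas~\ref{lem:10.4}--\ref{lem:10.5}, which in the paper come from \cite{B1,SR,HKO}; there is no circularity, but be aware you are repackaging other people's nontrivial structure theory rather than giving an independent argument. Second, for $(1)\Rightarrow(2)$ you correctly flag that the real content is the graph-map horseshoe theorem, and you essentially defer to \cite{M} for it; the subsequent passage from a horseshoe to a basic set is sketched rather loosely (``expansion supplied by the horseshoe forces $P_M(x,f)=M$'' needs a genuine argument identifying the minimal cycle $M$ and checking $E_S(M,f)$ is infinite). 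Since the paper itself simply cites the result, your level of detail already exceeds what the authors provide, but if you intend this as a self-contained proof you would need to fill in that last step carefully.
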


\begin{thm} \label{thm:min}Let $f$ be a continuous map acting on a graph $G$ with $h(f)=0$. Then a set $L$ is an $\alpha$-limit set of a backward branch $\{x_j\}_{j\leq 0}$ if and only if $L$ is a~minimal set.
\end{thm}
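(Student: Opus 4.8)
We split the argument according to the two implications; the substance is in the second one.

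\textbf{Minimal sets occur as $\alpha$-limit sets.} If $L$ is minimal then $f(L)=L$ and $(L,f|_L)$ is transitive, so applying the Proposition preceding Corollary~\ref{trans} to $(L,f|_L)$ and any $x\in L$ produces a backward branch $\{x_j\}_{j\le 0}\subseteq L$, which is in particular a backward branch of $x$ in $G$; its $\alpha$-limit set is a nonempty closed strongly invariant subset of $L$, hence equals $L$ by minimality. Conversely, if $L=\alpha(\{x_j\}_{j\le 0})$ is finite, then by Lemma~\ref{lem:alphaPer} it is a single periodic orbit, which is minimal.

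\textbf{The infinite case: setup.} Assume $L=\alpha(\{x_j\}_{j\le 0})$ is infinite. By Theorem~\ref{prop:max}, $L$ is contained in a maximal $\omega$-limit set $W$; since $h(f)=0$, Theorem~\ref{PosEnt} rules out basic sets, and as $W$ is infinite it is not a periodic orbit, so $W$ is a solenoidal set $Q_{max}=Q\cap\omega(f)$ or a circumferential set $S(M)$. In the first case fix a generating sequence $Q=\bigcap_n M_n$ of cycles of graphs, with periods $p_n\nearrow\infty$ and $p_n\mid p_{n+1}$; in the second case let $M$ be the minimal cycle of circles containing $S(M)$. In both cases there is a distinguished minimal set $W_{\min}\subseteq W$ — namely $Q_{\min}=Q\cap\overline{\Per f}$ by \cite{B1} in the solenoidal case, and the minimal set inside $S(M)$ in the circumferential case — with the property (Blokh's structure theory) that $\omega(y)=W_{\min}$ for every point $y$ of the ambient structure ($Q$, respectively $M$). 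I will show $L=W_{\min}$. The inclusion $W_{\min}\subseteq L$ is immediate: for $y\in L$ we have $\omega(y)\subseteq L$ because $L$ is closed and forward invariant, and $\omega(y)=W_{\min}$ because $y\in Q$ (resp.\ $y\in M$).

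\textbf{The inclusion $L\subseteq W_{\min}$.} In the solenoidal case the components of each $M_n$ form a single $f$-cyclic family of pairwise disjoint compacta, hence are at positive distance; since $L$ is strongly invariant, the set of components of $M_n$ meeting $L$ is nonempty and shift-invariant, so $L$ meets every component of every $M_n$. Consequently every point of $Q$, and — by uniform continuity of $f$ — every $x_j$ with $j$ past a threshold depending on $n$, acquires an \emph{address} $c_n(\cdot)\in\mathbb{Z}/p_n\mathbb{Z}$ recording the component it lies in (resp.\ near), with $c_n(x_{j+1})=c_n(x_j)+1$ and $c_n(f(w))=c_n(w)+1$ for $w\in Q$. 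Taking the inverse limit over $n$ yields a factor map $\pi\colon Q\to\Delta$ onto the adding machine $\Delta$ (the inverse limit of the groups $\mathbb{Z}/p_n\mathbb{Z}$ under the natural projections), intertwining $f$ with the adding map $\sigma$. The address bookkeeping also shows that the tail of the backward branch lies within distance tending to $0$ of $Q$ and traces a backward $\sigma$-orbit in $\Delta$; as $\sigma$ is minimal this orbit is dense, so $\pi(L)=\Delta$. To upgrade this to $L=Q_{\min}$ one shows every $z\in L$ is recurrent — hence lies in $Q\cap\overline{\Per f}=Q_{\min}$: two visits $x_{j'}$, $x_j$ of the branch to a small ball around $z$ (which exist since $z\in\alpha(\{x_j\}_{j\le0})$) force $\sigma^{j-j'}$ to nearly fix $\pi(z)$, so $j-j'$ is a large multiple of some $p_n$, and combining the resulting self-map of the small component graph containing $z$ with the equicontinuity of $\sigma$ and with $\omega(z)=Q_{\min}$ produces periodic points arbitrarily close to $z$. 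The circumferential case is parallel: Theorem~\ref{thm:modelE} and the rotation dichotomy recalled before Corollary~\ref{cor:modelBS} give a minimal rotation $g$ on a cycle of circles $Y$ almost conjugate to $f|_{S(M)}$; $g$ is an equicontinuous homeomorphism, the pushed-forward branch is again a dense backward $g$-orbit, and condition~(3) of the almost conjugacy keeps the fibres of the factor map under control.

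\textbf{Main obstacle.} The whole difficulty sits in $L\subseteq W_{\min}$: one must show that the backward branch cannot accumulate at a point of $W\setminus W_{\min}$, the non-recurrent part of the maximal $\omega$-limit set, which is genuinely nonempty for some zero-entropy graph maps. Internal chain transitivity of $L$ does not suffice here (all of $W$ is internally chain transitive), so the argument has to exploit that $L$ arises from a \emph{backward} branch; the technical heart is the recurrence step above, where the a priori nontrivial fibres of the factor map onto the adding machine (resp.\ the rotation model) must be handled with care.
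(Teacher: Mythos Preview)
Your setup and the easy direction are fine, and your observation $W_{\min}\subseteq L$ (via $\omega(y)=W_{\min}$ for $y\in L$) is correct. But there is a genuine gap in the hard inclusion $L\subseteq W_{\min}$, and you have also made the circumferential case much harder than it is.

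\textbf{Circumferential case.} You treat $S(M)$ as if it merely \emph{contained} a minimal set $W_{\min}$; in fact Blokh's structure theorem (see \cite{B1}, \cite{MShao}, and the discussion after Theorem~\ref{thm:modelE}) gives that $S(M)$ \emph{is} minimal: it is the unique minimal set of $(M,f|_M)$, and every point of $M\setminus S(M)$ is wandering. Since the maximal $\omega$-limit set here is $W=S(M)$ itself, you already have $L\subseteq S(M)=W_{\min}$ for free, and there is nothing further to prove. Your rotation-model argument is not needed.

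\textbf{Solenoidal case.} Here the gap is real. Your sketch claims that two nearby visits $x_{j'},x_j$ to $z$, together with the self-map $f^{j-j'}$ of the component $K_n\ni z$ of $M_n$, ``produces periodic points arbitrarily close to $z$''. This does not follow: a continuous self-map of a subgraph need not have a fixed point (think of a rotation on a circle component), and even when it does, that fixed point need not be near $z$ --- the components $K_n$ are not assumed to shrink to points, since $Q=\bigcap_n M_n$ may have non-degenerate connected components. Equicontinuity of the odometer $\sigma$ controls the \emph{address} of $f^{j-j'}(z)$ in $\Delta$, not the position of $f^{j-j'}(z)$ inside its (possibly fat) fibre; so the step from ``$\sigma^{j-j'}$ nearly fixes $\pi(z)$'' to ``$z$ is recurrent'' is exactly the hard part, and you have not supplied it. Your own ``Main obstacle'' paragraph essentially acknowledges this.

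The paper avoids this difficulty by a different route. Once the branch is shown to lie in every $M_n$ (your address bookkeeping does give this), one picks $n$ with period $p_n>\#Br(G)$, so that some component $I$ of $M_n$ is a genuine interval with no branching points. Then $f^{p_n}|_I$ is a zero-entropy interval map, and $L\cap I$ is an $\alpha$-limit set of a backward branch for it; by \cite[Theorem~12]{BDLO} this forces $L\cap I$ to be perfect, and a short continuity argument upgrades this to $L$ perfect. Finally one invokes Blokh's description of $Q_{max}$: its isolated points form an at most countable set, and its limit points all lie in $Q_{\min}$. Hence $L\subseteq Q_{\min}$, and minimality gives equality. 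The key idea you are missing is this reduction to the already-settled interval case rather than a direct recurrence argument inside $Q$.
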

\begin{proof}
Since a minimal set $L$ is closed, for any backward branch $\{x_j\}_{j\leq 0}\subseteq L$ we have  $\alpha(\{x_j\}_{j\leq 0})\subseteq L$. But $\alpha(\{x_j\}_{j\leq 0})$ is a closed invariant set. By minimality of $L$, $\alpha(\{x_j\}_{j\leq 0})=L$.\\
By Blokh's Decomposition Theorem \cite[Theorem 4]{B1} and Theorem \ref{PosEnt}, the maximal $\omega$-limit sets of the system $(G,f)$ are solenoidal sets, circumferential sets and periodic orbits which are maximal $\omega$-limit sets with respect to inclusion. If $L$ is an $\alpha$-limit set of a backward branch $\{x_j\}_{j\leq 0}$ then, by Theorem \ref{prop:max}, $L$ is contained in one of these maximal $\omega$-limit sets. If $L=\alpha(\{x_j\}_{j\leq 0})$ is contained in a periodic orbit, then $L$ coincides with this periodic orbit. \\
Assume that $L$ is contained in a solenoidal maximal $\omega$-limit set $Q_{max}$ and let $M_1\supset M_2\supset \ldots$ be the generating sequence of cycles of graphs with periods tending to infinity such that $Q_{max}\subseteq \cap_n M_n$. Since $L$ is infinite, $\{x_j\}_{j\leq 0}\cap M_n\neq \emptyset$, for every $n$, and, by the invariance of $M_n$, $\{x_j\}_{j\leq 0}\subset M_n$, for every $n$. There is a cycle $M_k$ with period $m(k)$ greater than $\# Br(G)$. Denote by $I$ the connected component of $M_k$ such that $M_k\cap B(G)=\emptyset$. The set $\{x_j\}_{j\leq 0}\cap I$ is infinite and forms a backward branch with respect to $f^{m(k)}$. Therefore $I\cap L$ is an $\alpha$-limit set of the backward branch for the zero entropy interval map $f^{m(k)}|_I$ and, by Theorem 12 from \cite{BDLO}, $I\cap L$ is a perfect set. If $z$ is an isolated point of $L$, then $z$ has a pre-image $\hat{z}$ in $I\cap L$. Since $f$ is continuous and $\hat{z}$ is not isolated in $I\cap L$, there is a neighbourhood $U$ of $\hat{z}$ in $I\cap L$ such that $U$ is eventually mapped on $z$. This implies $f^{m(k)}(U)$ is a singleton and as a consequence $U$ contains a periodic point. But it is impossible, since there are no periodic points in a solenoidal set $Q_{max}$ and it follows that $L$ is a perfect set. By \cite[Theorem 1]{B1}, $Q_{max}$ has at most countable set of isolated points and the set of all limit points of $Q_{max}$ is contained in the minimal set $Q_{min}=Q\cap \overline{Per f}$. Therefore $L\subseteq Q_{min}$ and, by minimality of $Q_{min}$, $L=Q_{min}.$\\
Let $L$ be contained in a circumferential set $S(X,f)$. The following result can be found in \cite[Theorem 3]{B1} or \cite{SR} and we briefly recall it here. Let  $X_1,\ldots,X_n$ be the connected components of $X$. Then, either, for every $i$, $f^k|_{X_i}$ is conjugate to an irrational rotation (and in this case $S(X_i, f^k) =X_i)$, or, for every $i$, there exists a semi-conjugacy $\phi_i$ between $f^k|_{X_i}$ and an irrational rotation which is an almost conjugacy on $f^k|_{S(X_i,f^k)}$. The latter case is called the Denjoy type of $\omega$-limit set and it is described in detail in \cite{MShao}. In both cases, $S(X,f)$ is the unique minimal set of the system $(X,f)$ and all points in $X\setminus S(X,f)$ are wandering \cite[Corollary 4.4]{MShao}. By minimality of $S(X,f)$, $L\subseteq S(X,f)$ implies $L= S(X,f)$.
 
\end{proof}
\begin{rem}\label{rm:minsets}
A minimal set for a graph map $f$ with $h(f)=0$ is either a periodic orbit, a minimal solenoidal set or a circumferential set. Theorem \ref{thm:min} shows that these (and only these) sets can be realized as $\alpha$-limit sets of backward branches for~$f$.
\end{rem}
Denote the family of all $\alpha$-limit sets of backward branches starting at a point $x\in G$ by $\mathcal{A}(x),$
$$\mathcal{A}(x)=\{L\in \mathcal{P}(G): \exists \{x_j\}_{j\leq 0} \text { such that } x_0=x \text { and } L=\alpha(\{x_j\}_{j\leq 0})\}.$$
\begin{cor}
 Let $f$ be a continuous map acting on a graph $G$ with $h(f)=0$ and $x\in G$. Then $\mathcal{A}(x)$ contains at most one infinite set.
\end{cor}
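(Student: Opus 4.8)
The plan is to prove the sharper statement that \emph{every} infinite $L\in\mathcal{A}(x)$ is equal to $\omega(x)$; the corollary then follows at once. Fix a backward branch $\{x_j\}_{j\le 0}$ with $x_0=x$ and $L:=\alpha(\{x_j\}_{j\le 0})$ infinite. By Theorem \ref{thm:min} the set $L$ is minimal, so by Remark \ref{rm:minsets} it is either a minimal solenoidal set or a circumferential set. In both cases the first step is the same observation: if $M\subseteq G$ is any cycle of graphs with $L\subseteq M$, then $x\in M$. Indeed, since $L$ is infinite while $\partial M$ is finite, $L$ meets $\Int M$; choose $q\in L\cap\Int M$. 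As $q$ is an accumulation point of the backward branch, there are negative indices $j$ of arbitrarily large absolute value with $x_j$ so close to $q$ that $x_j\in\Int M\subseteq M$, and then $f(M)=M$ forces $x_{j'}\in M$ for every $j'\ge j$; since every index is $\ge$ some such $j$, we get $\{x_j\}_{j\le 0}\subseteq M$, and in particular $x\in M$. Now if $L$ is a minimal solenoidal set, write (as in the proof of Theorem \ref{thm:min}) $L=Q_{min}$ for a generating sequence $M_1\supset M_2\supset\cdots$ with $Q=\bigcap_n M_n$; applying the observation to each $M_n$ yields $x\in\bigcap_n M_n=Q$, and hence $\omega(x)=Q_{min}=L$ by \cite[Theorem 1]{B1}.

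It remains to handle the circumferential case $L=S(X,f)$, where $X$ is a cycle of graphs with no periodic points. By the step above, $x\in X$. Since $f(X)=X$, the set $\omega(x)$ is a nonempty closed invariant subset of $X$; it is infinite, because a finite $\omega$-limit set would be a periodic orbit by Remark \ref{rem:omegaFinite} while $X$ has none, and it contains a minimal set, which must be $S(X,f)$ (the unique minimal subset of $X$). Let $W$ be a maximal $\omega$-limit set with $\omega(x)\subseteq W$ \cite{MaiShao}. Then $W$ is infinite, and since $h(f)=0$ there are no basic sets (Theorem \ref{PosEnt}), so by Blokh's Decomposition Theorem \cite{B1} the set $W$ is solenoidal or circumferential. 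The solenoidal alternative is impossible: then $W$, hence $S(X,f)$, would be contained in cycles of graphs of unbounded period, each of which contains $X$ by Remark \ref{rm:mincyc}; but if a $q$-cycle of graphs contains the $p$-cycle $X$, then sending the $i$-th component of $X$ into the $\sigma(i)$-th component of the $q$-cycle gives $\sigma(i+1)\equiv\sigma(i)+1\pmod q$, so $p\equiv 0\pmod q$, which bounds $q$ by $p$ — a contradiction. Hence $W$ is circumferential, therefore minimal, and $W\supseteq S(X,f)$ forces $W=S(X,f)$; thus $\omega(x)=S(X,f)=L$.

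In every case $L=\omega(x)$, so any two infinite members of $\mathcal{A}(x)$ coincide, which is the assertion. The only point needing genuine care is the uniform step $x\in M$: a backward branch realizing $L$ need not lie inside the relevant cycle of graphs from the start, so one must combine the finiteness of $\partial M$ with the fact that the tail of the branch clusters \emph{only} on $L\subseteq M$ in order to land a far-back segment of the branch inside $M$, and then transport this conclusion forward along the forward-invariant set $M$ all the way to $x_0=x$.
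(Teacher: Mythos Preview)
Your proof is correct, and in fact establishes a sharper statement than the paper does: you show that every infinite $L\in\mathcal{A}(x)$ equals $\omega(x)$, which immediately pins down \emph{which} set the unique infinite member must be. The paper's argument is different in spirit. After the same key observation that $x$ lies in every cycle of graphs containing $L$, the paper does not compute $\omega(x)$ at all; instead it argues structurally that two distinct solenoidal sets $Q,Q'$ are disjoint, that a solenoidal $Q$ and a circumferential cycle $X$ are disjoint, and that two minimal circumferential cycles $X,X'$ are either equal or disjoint, so $x$ can belong to at most one of the ambient objects and hence $\mathcal{A}(x)$ can contain at most one infinite minimal set. Your route trades these pairwise disjointness facts for the identification $L=\omega(x)$ via Blokh's description of $\omega$-limit sets inside $Q$ and inside circumferential cycles; it is a bit more work in the circumferential case but yields a stronger conclusion. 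One small remark: your detour through a maximal $\omega$-limit set $W$ in the circumferential case can be shortened using a fact already cited in the paper's proof of Theorem~\ref{thm:min}, namely that every point of $X\setminus S(X,f)$ is wandering for $f|_X$; since $\omega(x)\subseteq X$ consists of non-wandering points of $f|_X$, this gives $\omega(x)\subseteq S(X,f)$ directly.
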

\begin{proof}
Let  $\{x_j\}_{j\leq 0}$, $x_0=x$, be a backward branch with $L:=\alpha(\{x_j\}_{j\leq 0})$ infinite. By Remark \ref{rm:minsets}, $L$ is either a minimal solenoidal set or a circumferential set. Assume the first case. Then there is a generating sequence of cycles of graphs $M_1\supset M_2\supset~\ldots$ with periods tending to infinity such that $L\subseteq Q$, where $Q=\cap_n M_n$. Since $L$ is infinite, the backward branch $\{x_j\}_{j\leq 0}$ intersects the cycle of graphs $M_n$, for every $n$. By the invariance of $M_n$, $x\in M_n$, for every $n$, and $x$ belongs to the solenoidal set $Q$. It is a well known fact that two solenoidal sets $Q=\cap_n M_n$ and $Q'=\cap_n M'_n$ generated by different sequences $M_1\supset M_2\supset \ldots$ and $M'_1\supset M'_2\supset \ldots$ are either identical or disjoint. Since $x\notin Q'$, for any $Q'\neq Q$, and since there is only one minimal set in $Q$, $L$ is the unique minimal solenoidal set in $\mathcal{A}(x)$. 

It is easy to see that, for every circumferential set $S(X)$, where $X$ is the minimal cycle of graphs containing $S(X)$, we have $M\supseteq X$ or $M\cap X=\emptyset$, for every cycle of graphs $M$. Clearly, $M\cap X\subsetneq X$ is impossible since $S(X)$ is the unique minimal set of the system $(X,f)$ and, by Remark \ref{rm:mincyc}, $X$ is the minimal cycle of graphs containing $S(X)$. Therefore $\cap_n M_n\cap X=\emptyset$ and $x\notin X$. Since every backward branch $\{x'_j\}_{j\leq 0}$, $x'_0=x$, has empty intersection with $X$, $S(X)$ does not belong to $\mathcal{A}(x)$ and $L$ is the unique infinite set in $\mathcal{A}(x)$.\\
Assume that $L$ is a circumferential set $S(X)$. Then $x\in X$. For any circumferential set $S(X')$, we have either $X\subseteq X'\wedge X'\subseteq X\implies X=X' $ and $S(X)=S(X')$, or the intersection $X\cap X'$ is empty and $x\notin X'$. Since every backward branch $\{x'_j\}_{j\leq 0}$, $x'_0=x$, has empty intersection with $X'$, for every $X'\neq X$, $S(X)$ is the unique circumferential set in $\mathcal{A}(x)$. By the reasoning above, there is no minimal solenoidal set in  $\mathcal{A}(x)$  and $L$ is the unique infinite set in $\mathcal{A}(x)$.
\end{proof}

In addition to one infinite $\alpha$-limit set, the family $A(x)$ can contain many finite $\alpha$-limit sets.  Every finite $\alpha$-limit set is a periodic orbit by Theorem \ref{thm:min}. In the following example, we will construct $A(x)$ containing a circumferential set and uncountably many periodic orbits.

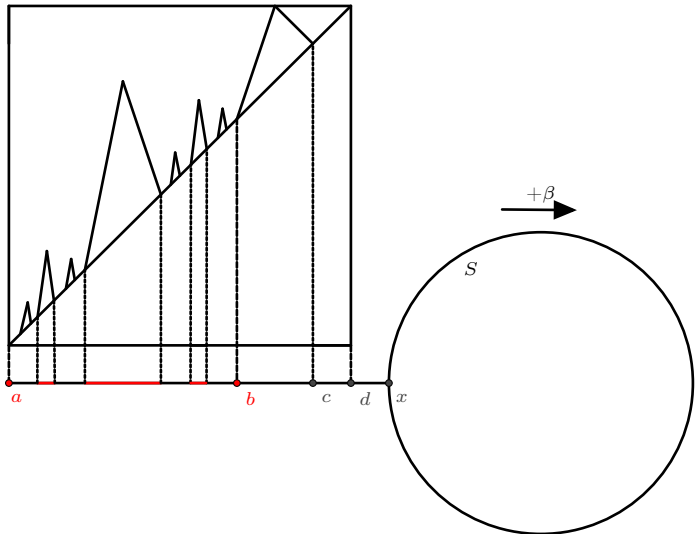
\begin{figure}[h]

\begin{tikzpicture}[scale=0.5,line cap=round,line join=round,>=triangle 45,x=1cm,y=1cm]
\clip(-0.3,-5) rectangle (20.200093106883806,10.228608665848423);
\draw  [line width=1pt] (0,-1)-- (10,-1);
\draw [line width=1pt] (0,0)-- (9,0);
\draw [line width=1pt,dash pattern=on 1pt off 1pt] (8,-1)-- (8,8);
\draw [line width=1pt] (0,9)-- (0,0);
\draw [line width=1pt] (9,9)-- (0,0);
\draw [line width=1pt] (6,6)-- (7,9);
\draw [line width=1pt] (7,9)-- (8,8);
\draw [line width=1pt,color=ffqqqq] (2,-1)-- (4,-1);
\draw [line width=1pt] (4.779375,0)-- (5.20125,0);
\draw [line width=1pt,color=ffqqqq] (0.7481249999999997,-1)-- (1.2075,-1);
\draw [line width=1pt,dash pattern=on 1pt off 1pt] (0.7481249999999997,-1)-- (0.7546875000000002,0.7546875000000002);
\draw [line width=1pt,dash pattern=on 1pt off 1pt] (1.2075,-1)-- (1.1859374999999996,1.1859374999999996);
\draw [line width=1pt,dash pattern=on 1pt off 1pt] (2,-1)-- (2,2);
\draw [line width=1pt,dash pattern=on 1pt off 1pt] (4,-1)-- (4,4);
\draw [line width=1pt,dash pattern=on 1pt off 1pt] (4.779375,-1)-- (4.790625000000001,4.790625000000001);
\draw [line width=1pt,dash pattern=on 1pt off 1pt] (5.20125,-1)-- (5.2078125,5.2078125);
\draw [line width=1pt] (2,2)-- (3,7);
\draw [line width=1pt] (3,7)-- (4,4);
\draw [line width=1pt] (4.790625000000001,4.790625000000001)-- (5,6.5);
\draw [line width=1pt] (5,6.5)-- (5.2078125,5.2078125);
\draw [line width=1pt] (0.7546875000000002,0.7546875000000002)-- (1,2.5);
\draw [line width=1pt] (1,2.5)-- (1.1859374999999996,1.1859374999999996);
\draw [line width=1pt] (5.5,5.5)-- (5.623125000000001,6.27375);
\draw [line width=1pt] (5.623125000000001,6.27375)-- (5.728125,5.728125);
\draw [line width=1pt] (4.260937500000001,4.260937500000001)-- (4.37625,5.11125);
\draw [line width=1pt] (4.37625,5.11125)-- (4.5,4.5);
\draw [line width=1pt] (1.5,1.5)-- (1.63875,2.2893749999999997);
\draw [line width=1pt] (1.63875,2.2893749999999997)-- (1.734375,1.734375);
\draw [line width=1pt] (0.3,0.3)-- (0.495,1.13625);
\draw [line width=1pt] (0.495,1.13625)-- (0.58125,0.58125);
\draw [line width=1pt] (8,0)-- (9,0);
\draw [line width=1pt] (14,-1) circle (4cm);
\draw [line width=1pt] (0,9)-- (9,9);
\draw [line width=1pt] (9,9)-- (9,0);
\draw [line width=1pt] (0,9)-- (0,8);
\draw [line width=1pt,color=ffqqqq] (4.784572729679956,-1)-- (5.187276825757309,-1);
\draw [line width=1pt,dash pattern=on 2pt off 1pt] (6,6)-- (6,-1);
\draw [line width=1pt,dash pattern=on 2pt off 1pt] (0,-1)-- (0,0);
\draw [line width=1pt,dash pattern=on 2pt off 1pt] (9,-1)-- (9,0);
\draw [->,line width=1pt] (12.978935460221383,3.60350613911873) -- (14.948270028275864,3.586381490700866);
\begin{scriptsize}
\draw [fill=ffqqqq] (0,-1) circle (2.5pt);
\draw[color=ffqqqq] (0.2,-1.4) node {$a$};
\draw[color=black] (12.157389189439836,2.0255374996702455) node {$S$};
\draw [fill=uuuuuu] (9,-1) circle (2.5pt);
\draw[color=uuuuuu] (9.37081135537206,-1.4) node {$d$};
\draw [fill=ffqqqq] (6,-1) circle (2.5pt);
\draw[color=ffqqqq] (6.363076550346523,-1.4) node {$b$};
\draw [fill=uuuuuu] (8,-1) circle (2.5pt);
\draw[color=uuuuuu] (8.353489288966363,-1.4) node {$c$};
\draw [fill=uuuuuu] (10,-1) circle (2.5pt);
\draw[color=uuuuuu] (10.343902027586203,-1.4) node {$x$};
\draw[color=black] (14,4) node {+$\beta$};
\end{scriptsize}
\end{tikzpicture}
\caption{A map where the family $\mathcal{A}(x)$ is uncountable.}\label{fig:zeroentropy}
\end{figure}
\begin{exmp}Let $G$ be the union of the circle $S$ and the interval $[a,x]$ as shown on the Figure \ref{fig:zeroentropy}. The map $f\colon G\to G$ is defined such that $f|_S$ is the rotation by an irrational angle $\beta$ and the graph of $f\colon [a,c]\to [a,d]$ is sketched on the Figure \ref{fig:zeroentropy}. The construction of the interval map $f|_{[a,b]}$ was previously used in \cite[Example 4.8]{KMS}. The remaining interval $[c,x]$ is mapped by $f$ into $[c,x]\cup S$ continuously in such a way that $f(c)=c$ and $f(d)=x$. Then $\mathcal{A}(x)$ consists of a middle-third Cantor set of fixed points in the interval $[a,b]$ (drawn by red color on  the Figure \ref{fig:zeroentropy}) and the circumferential set $S$.
\end{exmp}
\section{Positive entropy graph maps}\label{sec:PosEnt}
In this section, we will investigate $\alpha$-limit sets of backward branches which are included in a basic set. By Theorem \ref{PosEnt}, every continuous graph map $f$ with $h(f)>0$ possess a basic set $D(X)$. The main goal is to use the model map $g\colon Y\rightarrow Y$ for the basic set $D(X)$ given by Corollary \ref{cor:modelBS} to obtain a similar result as for mixing graph maps in Theorem \ref{thm:alphaMixMap} and \ref{thm:mixingOmega}. Recall that the model map $g$ is almost conjugate to $f|_{D(X)}$ and $g^n|_{Y_i}$ is mixing on every component $Y_i$ of $Y$.\\
We introduce an equivalence relation on $X$  as follows:
	 $$x\sim y \Leftrightarrow \phi(x)=\phi(y),$$
 where $x,y \in X$ and $\phi$ is the almost conjugacy between $f|_{D(X)}$ and $g$ from Corollary~\ref{cor:modelBS}. The relation $\sim$ is well defined since $\phi$ is unique up to the homeomorphism by Lemma \ref{lem:almostC}. Denote $[x]_{\sim}$ the equivalence class of a point $x\in X$ and $[A]_{\sim}=\bigcup_{x\in A} [x]_{\sim}$, for any $A\subset X$. Obviously $[x]_{\sim}=\phi^{-1}(\phi(x))$ and $[A]_{\sim}=\phi^{-1}(\phi(A))$. By the definition of almost conjugacy, $[x]_{\sim}$ is a subgraph of $X$ such that $End([x]_{\sim})=[x]_{\sim}\cap D(X)$ and $f([x]_{\sim})\subseteq [f(x)]_{\sim}$. The last inclusion ensures that for every backward branch $\{\tx_j\}_{j\leq 0}\subset Y$ constructed with respect to the model map $g$ and starting at $\phi(x)$ there is a backward branch $\{z_j\}_{j\leq 0}\subset X$ with respect to $f$ such that $\phi(z_j)=\tx_j$, for every $j\geq 0$. Unfortunatelly, the oposite inclusion $f([x]_{\sim})\supseteq [f(x)]_{\sim}$ may not hold in general for every $x\in X$. This makes our aim to use the model map $g$ difficult since the backward branch $\{z_j\}_{j\leq 0}$ may not start at $x$ but at some other point of $[x]_{\sim}$. Therefore in Theorem \ref{thm:alphaPosEntMap} we restrict ourselves to the case when $f([x]_{\sim})= [f(x)]_{\sim}$ for every $x\in X$ or, equivalently, $f(\phi^{-1}(y))=\phi^{-1}(g(y))$ for every $y\in Y$.\\
 Let $\mathcal{I}(g^n|_{Y_i})$ be the set of inacessible points of the mixing graph map $g^n:Y_i\to Y_i$ given by Corollary \ref{cor:modelBS}, for every $i=0,1,\ldots, n-1$. Then we define the set of inaccesible points of $X$ as the union of preimages of inacessible points of the model mixing map,
$$\mathcal{I}(X)=\bigcup_{0\le i\leq n-1} \phi^{-1}(\mathcal{I}(g^n|_{Y_i})).$$
\begin{lem}\label{lem:mixingE}
Let $V\subset X$ be a subgraph such that $\phi(V)$ is a non-degenerate subgraph of $Y$. Then $\bigcup_{k=0}^{\infty}f^k(V)\supseteq X\setminus \mathcal{I}(X)$. Consequently, for every point $x\in X\setminus \mathcal{I}(X)$ there is a preimage $z\in f^{-k}(x)\cap V$, for some $k>0$.
\end{lem}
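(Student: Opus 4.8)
The plan is to transfer the statement to the model map $g\colon Y\to Y$ through the almost conjugacy $\phi$, using the assumption $f(\phi^{-1}(y))=\phi^{-1}(g(y))$ for all $y\in Y$ made in this part; iterating it one gets $f^{k}(\phi^{-1}(A))=\phi^{-1}(g^{k}(A))$ for every $A\subseteq Y$ and every $k\geq 0$. Granting the first assertion, the ``Consequently'' part comes for free: since $g$ is non-constant on each component $Y_i$, the set $g(\phi(V))$ is again a non-degenerate subgraph of $Y$, so $f(V)$ is a subgraph with $\phi(f(V))=g(\phi(V))$ non-degenerate, and applying the first assertion to $f(V)$ gives $\bigcup_{k\geq 1}f^{k}(V)=\bigcup_{k\geq 0}f^{k}(f(V))\supseteq X\setminus\mathcal{I}(X)$, which is precisely the preimage statement with $k>0$. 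So it suffices to prove $\bigcup_{k\geq 0}f^{k}(V)\supseteq X\setminus\mathcal{I}(X)$.

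First I would work on the model side. Since $\phi(V)$ is a non-degenerate subgraph, it contains a non-degenerate free arc of $Y$, which lies in a single component $Y_{i_0}$, so $W_0:=\phi(V)\cap Y_{i_0}$ is a non-degenerate subgraph of $Y_{i_0}$. As $g^{n}|_{Y_{i_0}}$ is mixing, the defining formula \eqref{eq:inac} for the inaccessible set, used with the subgraph $W_0$, yields $\bigcup_{m\geq 0}(g^{n}|_{Y_{i_0}})^{m}(W_0)\supseteq Y_{i_0}\setminus\mathcal{I}(g^{n}|_{Y_{i_0}})$, hence $\bigcup_{k\geq 0}g^{k}(\phi(V))\supseteq Y_{i_0}\setminus\mathcal{I}(g^{n}|_{Y_{i_0}})$. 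The set $\bigcup_{k\geq 0}g^{k}(\phi(V))$ is forward invariant under $g$, and $g$ permutes $Y_0,\dots,Y_{n-1}$ cyclically and, by Blokh's description of the inaccessible set, carries $Y_{i}\setminus\mathcal{I}(g^{n}|_{Y_{i}})$ onto $Y_{i+1}\setminus\mathcal{I}(g^{n}|_{Y_{i+1}})$; applying $g^{0},g,\dots,g^{n-1}$ to the last inclusion we obtain
$$
\bigcup_{k\geq 0}g^{k}(\phi(V))\ \supseteq\ \bigcup_{i=0}^{n-1}\bigl(Y_{i}\setminus\mathcal{I}(g^{n}|_{Y_{i}})\bigr)\ \supseteq\ Y\setminus\bigcup_{i=0}^{n-1}\mathcal{I}(g^{n}|_{Y_{i}}).
$$

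Now pull back. Fix $x\in X\setminus\mathcal{I}(X)=\phi^{-1}\bigl(Y\setminus\bigcup_{i}\mathcal{I}(g^{n}|_{Y_{i}})\bigr)$ and set $y=\phi(x)$. By the displayed inclusion there are $k\geq 0$ and $w\in\phi(V)$ with $g^{k}(w)=y$, so that $x\in\phi^{-1}(y)=\phi^{-1}(g^{k}(w))=f^{k}(\phi^{-1}(w))$; thus $x=f^{k}(z)$ for some $z\in\phi^{-1}(w)$. If the fibre $\phi^{-1}(w)$ is a single point, then, being the unique $\phi$-preimage of $w\in\phi(V)$, it lies in $V$, so $z\in V$ and we are done; likewise if $V$ happens to be $\phi$-saturated, then $\phi^{-1}(w)\subseteq V$ and again $z\in V$. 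Only countably many fibres of $\phi$ are non-degenerate, so this already handles every $x$ whose fibre $\phi^{-1}(\phi(x))$ is a singleton. The remaining case -- $x$ lying in one of the countably many non-degenerate fibres (``collapsed subgraphs'') -- is the heart of the matter: one must ensure that among the $f^{k}$-preimages of $x$ supplied by $f^{k}(\phi^{-1}(w))=\phi^{-1}(y)$ there is one inside $V$ and not merely inside $\phi^{-1}(\phi(V))$. Here one would exploit that the standing assumption forces the countable set of base points of non-degenerate fibres to be backward invariant under $g$, and combine this with the density in $Y$ of the full $g$-backward orbit of any point of $Y\setminus\bigcup_i\mathcal{I}(g^{n}|_{Y_i})$ (which follows from mixing of the $g^{n}|_{Y_i}$, as in the proof of Theorem~\ref{thm:alphaMixMap}) and with the fact that $f$ maps each collapsed subgraph onto the next. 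I expect this last bookkeeping -- pinning the exact $f^{k}$-preimage of $x$ to $V$ over the collapsed part -- to be the only genuinely delicate point; everything outside it is a routine transfer across the almost conjugacy together with an application of the mixing theory of Section~\ref{sec:mixing}.
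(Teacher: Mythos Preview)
Your approach has two problems. First, you rely on the hypothesis $f(\phi^{-1}(y))=\phi^{-1}(g(y))$ for all $y\in Y$, but this is \emph{not} a hypothesis of Lemma~\ref{lem:mixingE}; that condition is introduced only for Theorem~\ref{thm:alphaPosEntMap}, and the paper proves the lemma without it. Second, and more seriously, even granting this extra assumption your argument does not close: from $g^{k}(w)=\phi(x)$ with $w\in\phi(V)$ you deduce $x\in f^{k}(\phi^{-1}(w))$, but $\phi^{-1}(w)$ is only known to lie in $[V]_{\sim}=\phi^{-1}(\phi(V))$, not in $V$ itself. You flag this as the ``delicate point'', yet the remedies you propose (backward invariance of the set of fat-fibre bases, density of backward $g$-orbits) do not actually pin a preimage to $V$; at best they recover the case where $\phi^{-1}(w)$ is a singleton, which you already handled separately.

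The paper's argument bypasses both difficulties with a single device you are missing: it works with \emph{interiors} throughout. The key observation is that for the monotone surjection $\phi$ and a set $A\subset X$, if $\phi(x)\in\Int(\phi(A))$ then $x\in A$. Since the inaccessible set~\eqref{eq:inac} is by definition the complement of $\bigcap_U\bigcup_k\Int f^{k}(U)$, mixing of $g^{n}|_{Y_i}$ yields $\bigcup_{k}\Int(g^{k}(\phi(V)))\supseteq Y\setminus\bigcup_i\mathcal{I}(g^{n}|_{Y_i})$; hence for $x\in X\setminus\mathcal{I}(X)$ one gets $\phi(x)\in\Int(g^{k}(\phi(V)))=\Int(\phi(f^{k}(V)))$ for some $k$, and the observation gives $x\in f^{k}(V)$ directly --- no saturations, no extra hypothesis. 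The ambiguity between $V$ and $[V]_{\sim}$ that stalls your argument lives precisely over the boundary $\partial\phi(f^{k}(V))$, and passing to interiors is exactly what discards it.
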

\begin{proof}
Notice that if $\phi(x)\in \Int (\phi(A))$, for some $x\in X$ and $A\subset X$, then $x\in A$. Since $\phi(V)$ is a non-degenerate subgraph of $Y$, there is a component $Y_i$ of $Y$ such that $\phi(V)\cap Y_i$ is non-degenerate. Since $g^n|_{Y_i}$ is mixing, we have by Equation \ref{eq:inac},  $$ \bigcup_{k=0}^{\infty}\Int(\phi(f^{n\cdot k}(V)))=\bigcup_{k=0}^{\infty}\Int(g^{n\cdot k}(\phi(V)))=Y_i\setminus\mathcal{I}(g^n|_{Y_i}).$$
The image of $\phi(V)$ by $g$ is a non-degenerate subgraph of the component $Y_{i+1}$ (otherwise $g^{n\cdot k}(\phi(V))$ is a singleton, for every $k>1$, which is in a contradiction with the equation above) and the same holds for every $g^j(\phi(V))$, $j=0,1,\ldots, n-1$. Again by Equation \ref{eq:inac}, 
$$ \bigcup_{k=0}^{\infty}\Int(\phi(f^{k}(V)))=\bigcup_{j=0}^{n-1}\bigcup_{k=0}^{\infty}\Int(g^{n\cdot k+j}(\phi(V)))=\bigcup_{j=0}^{n-1}Y_j\setminus\mathcal{I}(g^n|_{Y_j})=\phi(X\setminus \mathcal{I}(X)).$$
Therefore $x\in X\setminus \mathcal{I}(X)$ implies $\phi(x)\in \Int(\phi(f^k(V)))$, for some $k>0$, and we have $x\in f^k(V)$.

\end{proof}

\begin{thm}\label{thm:alphaPosEntMap}
 Let $D(X)$ be a basic set such that $f([x]_{\sim})= [f(x)]_{\sim}$, for every $x\in X$. Then, for every $x \in X\setminus \mathcal{I}(X)$ and every $\omega$-limit set $\omega_f(y)$ such that $\omega_f(y)\subset D(X)$ is infinite, there exists a backward branch $\{z_j\}_{j\leq 0}$ such that $z_0=x$ and $\omega_f(y)\subseteq \alpha(\{z_j\}_{j\leq 0})\subseteq[\omega_f(y)]_{\sim}\cap D(X)$.
\end{thm}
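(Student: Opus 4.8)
The plan is to transfer the problem to the mixing model $g\colon Y\to Y$ of Corollary~\ref{cor:modelBS}, solve it there with the machinery of Section~\ref{sec:mixing}, and carry the answer back along $\phi$, which is licit precisely because $f(\phi^{-1}(v))=\phi^{-1}(g(v))$. Put $\omega:=\omega_f(y)$; after replacing $y$ by a point of $X$ with the same $\omega$-limit set (so that $\phi(y)$ makes sense; this is harmless since $\omega\subseteq D(X)\subseteq X$) we get that $\widetilde\omega:=\phi(\omega)=\omega_g(\phi(y))$ is an $\omega$-limit set of $g$, as $\phi|_{D(X)}$ is a factor map onto $(Y,g)$. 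Moreover $\widetilde\omega$ is infinite: since $\phi^{-1}(v)\cap D(X)$ is finite for every $v\in Y$, a finite $\widetilde\omega$ would force $\omega\subseteq\phi^{-1}(\widetilde\omega)\cap D(X)$ to be finite. Because $x\notin\mathcal I(X)$, the point $\phi(x)$ lies in the interior of a single component $Y_j$ of $Y$, with $\phi(x)\notin\mathcal I(g^n|_{Y_j})$, and $\widetilde\omega\cap Y_j$ is an infinite $\omega$-limit set of the mixing graph map $g^n|_{Y_j}$. Applying Theorem~\ref{thm:alphaMixMap} to $g^n|_{Y_j}$ gives a backward branch of $g^n$ starting at $\phi(x)$ whose $\alpha$-limit set for $g^n$ is $\widetilde\omega\cap Y_j$; interpolating the intermediate iterates by $\widetilde x_{-nk-l}:=g^{\,n-l}(\widetilde x_{-n(k+1)})$ turns it into a backward branch $\{\widetilde x_j\}_{j\le0}$ of $g$ with $\widetilde x_0=\phi(x)$ and $\alpha_g(\{\widetilde x_j\}_{j\le0})=\bigcup_{l=0}^{n-1}g^l(\widetilde\omega\cap Y_j)=\widetilde\omega$.

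Using $f(\phi^{-1}(v))=\phi^{-1}(g(v))$, I lift $\{\widetilde x_j\}_{j\le0}$ one step at a time to a backward branch $\{z_j\}_{j\le0}$ of $f$ with $z_0=x$ and $\phi(z_j)=\widetilde x_j$ for every $j\le0$. A compactness argument then yields $\phi(\alpha_f(\{z_j\}_{j\le0}))=\alpha_g(\{\widetilde x_j\}_{j\le0})=\widetilde\omega$, hence $\alpha_f(\{z_j\}_{j\le0})\subseteq\phi^{-1}(\widetilde\omega)=[\omega]_{\sim}$. To get also $\alpha_f(\{z_j\}_{j\le0})\subseteq D(X)$: once we know (from the next paragraph) that $\alpha_f(\{z_j\}_{j\le0})\supseteq\omega$, this set is infinite, so by Theorem~\ref{prop:max} it lies in a maximal $\omega$-limit set; that set also contains $\omega\subseteq D(X)$, and since an $\omega$-limit set of a graph map is contained in a unique maximal one \cite{MaiShao} while $D(X)$ is itself a maximal $\omega$-limit set by Theorem~\ref{E}, it must equal $D(X)$. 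Combining, $\alpha_f(\{z_j\}_{j\le0})\subseteq[\omega]_{\sim}\cap D(X)$.

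It remains to force $\{z_j\}_{j\le0}$ to accumulate on every point of $\omega$. Fix $\varepsilon>0$. Using only that $\Orb_f(y)$ accumulates on all of $\omega$, choose $T$ large and then $e$ arbitrarily large so that $f^{e}(y),\dots,f^{e+T}(y)$ lie in $B(\omega,\varepsilon)$ and are $\varepsilon$-dense in $\omega$. Pick $\delta>0$ with $d(a,b)<\delta\Rightarrow d(f^{l}(a),f^{l}(b))<\varepsilon$ for $0\le l\le T$, and take $P(\delta)\subset Y$, $\gamma>0$ from Lemma~\ref{lem:P}. Since $\widetilde\omega$ is infinite the orbit of $\phi(y)$ is not eventually periodic, so it visits $Y\setminus P(\delta)$ infinitely often, and we may in addition require $g^{e}(\phi(y))\notin P(\delta)$. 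In the construction underlying Theorem~\ref{thm:alphaMixMap} (Lemma~\ref{lem:MixInfinite} together with Remark~\ref{rm:ArbLargeConst}) the branch $\{\widetilde x_j\}_{j\le0}$ can be arranged so that, along windows of indices escaping to $-\infty$, it shadows arbitrarily long pieces of $\Orb_g(\phi(y))$ with arbitrarily fine precision, and such a window may be chosen to begin where $\{\widetilde x_j\}$ is as close as we wish to $g^{e}(\phi(y))$. If the starting index of such a window is $i_0$, then $\widetilde x_{i_0}$ lies in a neighbourhood $U$ of $g^{e}(\phi(y))$ with $\diam U\le\gamma$ and $U\cap P(\delta)=\emptyset$, so by Lemma~\ref{lem:P} we have $\diam\phi^{-1}(U)<\delta$; since $f^{e}(y)\in\phi^{-1}(g^{e}(\phi(y)))\subseteq\phi^{-1}(U)\ni z_{i_0}$, this gives $d(z_{i_0},f^{e}(y))<\delta$ and therefore $d(z_{i_0+l},f^{e+l}(y))<\varepsilon$ for $0\le l\le T$, because $z_{i_0+l}=f^{l}(z_{i_0})$. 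Hence along this window $\{z_j\}_{j\le0}$ passes within $2\varepsilon$ of every point of $\omega$; as $\varepsilon$ was arbitrary and such windows escape to $-\infty$, we conclude $\omega\subseteq\alpha_f(\{z_j\}_{j\le0})$.

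The crux is this last step: one has to thread the finitely many ``large'' fibres recorded by Lemma~\ref{lem:P} through the model construction so that the lifted branch actually shadows a forward orbit inside $X$, not merely its $\phi$-image — this is what makes all of $\omega$ reachable while keeping $\alpha_f(\{z_j\}_{j\le0})$ inside $[\omega]_{\sim}\cap D(X)$. Everything else is a formal transfer, through the almost conjugacy, of facts already proved for mixing graph maps.
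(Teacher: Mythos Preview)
Your proposal is correct and follows essentially the same strategy as the paper: push the problem through $\phi$ to the mixing model, run the modified Lemma~\ref{lem:MixInfinite} construction there while using Lemma~\ref{lem:P} to control the large fibres, lift back via the hypothesis $f(\phi^{-1}(v))=\phi^{-1}(g(v))$, and verify the two inclusions (the upper one via $\phi(\alpha_f)=\alpha_g$ and Theorem~\ref{prop:max}, the lower one by shadowing $\Orb_f(y)$).

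Two organisational differences are worth noting. First, you start the $Y$-branch directly at $\phi(x)$ via Theorem~\ref{thm:alphaMixMap}, whereas the paper builds it in the open set $U$ of Lemma~\ref{lem:MixInfinite}, lifts to $V=\phi^{-1}(U)$, and then reaches an arbitrary $x\in X\setminus\mathcal I(X)$ through Lemma~\ref{lem:mixingE}; both routes work. (Your side remark that $\phi(x)$ lies in the \emph{interior} of a single $Y_j$ need not be literally true, but this is harmless.) Second, for the inclusion $\omega\subseteq\alpha$ the paper applies Lemma~\ref{lem:P} at \emph{every} iterate along the shadowing window --- it arranges that $g^j(B'_{n_i}(x_i,\epsilon_i))$ misses $P(\epsilon_i)$ for all $j\le s_i$, so that the lift tracks $f^{e_i+j}(y)$ step by step --- while you invoke Lemma~\ref{lem:P} only at the \emph{start} of a window and then propagate by uniform continuity of $f,\dots,f^T$ on $X$. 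Your variant creates a dependency loop (the window length $T$ determines $\delta$, hence $P(\delta)$ and $\gamma$, yet the starting time $e$ must simultaneously give an $\varepsilon$-dense window of that length \emph{and} satisfy $g^e(\phi(y))\notin P(\delta)$) which you do not fully untangle; it can be resolved, but the paper's per-step use of Lemma~\ref{lem:P} sidesteps it entirely, since then $P(\epsilon_i)$ depends only on $\epsilon_i$ and one simply takes $e_i$ beyond the finitely many times the orbit of $\phi(y)$ meets it.
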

\begin{proof}
Assume first that $f|_{D(X)}$ is almost conjugate to a mixing graph map $g\colon Y\rightarrow Y$, i.e. the cycle of graphs $Y$ has only one component and take $y \in D(X)$ such that $\omega_f(y)\subset D(X)$ is infinite. If $y\notin D(X)$ then we can replace it by a point from $[y]_{\sim}\cap D(X)$ since the diameter of sets $[f^i(y)]_{\sim}$ tends to 0 as $i\to\infty$ and $\omega_f(x)=\omega_f(y)$, for every $x\in[y]_{\sim}$.
Note that $\phi(\omega_f(y))=\omega_g(\phi(y))$. Image by $\phi$ of any limit point of $\Orb_f(y)$ is a limit point of $\Orb_g(\phi(y))$ and conversely, by compactness, any limit point of $\Orb_g(\phi(y))$ can be obtained as an image of a limit point in  $\Orb_f(y)$.\\
Below we use the notation from the proof of Lemma \ref{lem:MixInfinite}. We introduce some modification implied by Remark \ref{rm:ArbLargeConst} to the construction in order to recover the desired backward branch $\{z_j\}_{j\leq 0}$. The construction from the proof of Lemma \ref{lem:MixInfinite} applied for $\omega$-limit set $\omega_g(\phi(y))\subset Y$ and any $x$ in an open set $U\subset Y$ gives us the backward brach $\{\tx_j\}_{j\leq 0}$ such that $\tx_0=x$ and $\alpha_g(\{\tx_j\}_{j\leq 0}) = \omega_g(\phi(y))$. The modification in the proof of Lemma \ref{lem:MixInfinite} is as follows. Fix $i\in\mathbb{N}$. Let $P(\epsilon_i)$ be the finite set from Lemma \ref{lem:P}. Since $\Orb(\phi(y))$ is infinite ($\Orb(y)$ is infinite subset of $D(X)$ and $\phi|_{D(X)}$ is finite-to-one), we can find $N>0$ such that $\phi(f^n(y))\cap P(\epsilon_i)=\emptyset$, for $n>N$. In the proof of Lemma \ref{lem:MixInfinite} we constructed a sequence $\{x_i\}_{i \in \mathbb{N}}$ and $\{\hx_i\}_{i \in \mathbb{N}}$ such that $x_i, \hx_i \in \Orb(\phi(y))$ for every $i \in \mathbb{N}$ and associated sequences $\{n_i\}_{i \in \mathbb{N}}$ and $\{\hn_i\}_{i \in \mathbb{N}}$ of times for which the orbits of points in the Bowen ball follows $\epsilon_i-$close the orbit of $x_i$ and $\hx_i$ respectively. We may require that  $x_i,\hx_i\in \Orb(\phi(f^{N+1}(y)))$. Since none of the points $x_i,\hx_i$ or their forward iterates belongs to $P(\epsilon_i)$, assuming that $n_i, \hn_i$ are sufficiently large, we may require that set (which can be arbitrarily small) $B'_{n_i}(x_i,\epsilon_i)$ (resp. $B'_{\hn_i}(\hx_i,\epsilon_i)$) does not contain any points from $P(\epsilon_i)$. Furthermore, if we fix any $s_i$, then by continuity, if $n_i, \hn_i$ are sufficiently large (in practice, much larger than $s_i$), also $g^j(B'_{n_i}(x_i,\epsilon_i))$, (resp. $g^j(B'_{\hn_i}(\hx_i,\epsilon_i))$) does not contain any points from $P(\epsilon_i)$ for $j=0,\ldots, s_i$. By Lemma \ref{lem:P}, if we fix any $q_i$ such that $\phi(q_i)=x_i$ and any $\tilde{q}_i$ such that $\phi(\tilde{q}_i)\in B'_{n_i}(x_i,\epsilon_i)$ then $d(f^j(q_i),f^j(\tilde{q}_i))<\epsilon_i$ for $j=0,\ldots, s_i$. The same holds if $\phi(q_i)=\hx_i$ and any $\tilde{q}_i$ such that $\phi(\tilde{q}_i)\in B'_{\hn_i}(\hx_i,\epsilon_i)$. In particular, we can take $q_i\in \Orb(f^{N+1}(y))$ obtaining that $\cup_{j\leq s_i}B(f^j(\tilde{q}_i),2\epsilon_i)\supset \omega_f(y)$ provided that $s_i$ was sufficiently large. This modification ensures that $ \alpha(\{\tilde{q}_j\}_{j\leq 0})\supset \omega_f(y)$ whenever $\{\tilde{q}_j\}_{j\leq 0}$ is a backward branch such that $\phi(\tilde{q}_j)=\tx_j$, for every $j\leq 0$. On the other hand, $\phi(\omega_f(y))=\omega_g(\phi(y))=\phi(\alpha(\{\tilde{q}_j\}_{j\leq 0}))$ which gives $ \alpha(\{\tilde{q}_j\}_{j\leq 0})\subset [\omega(y)]_{\sim}$. Since $\alpha(\{\tilde{q}_j\}_{j\leq 0})$ is a subset of a maximal $\omega$-limit set by Theorem \ref{prop:max} and it contains points from $\omega_f(y)\subset D(X)$, we have also $\alpha(\{\tilde{q}_j\}_{j\leq 0})\subset D(X)$.

It remains to show that $\{\tilde{q}_j\}_{j\leq 0}$ with $\tilde{q}_0=x$ exists for every $x \in X\setminus \mathcal{I}(X)$. But the sets $\phi^{-1}(\tx_j)$ form an inverse sequence by assumption, that is $f(\phi^{-1}(\tx_{j-1}))=\phi^{-1}(\tx_j)$, for every $j\leq 0$, therefore we can construct $\{\tilde{q}_j\}_{j\leq 0}$ for every $\tilde{q}_0\in \phi^{-1}(x)$, where $x$ is an arbitrary point from an open connected set $U\subset Y$, hence for every $\tilde{q}_0\in \phi^{-1}(U)$. Denote $V=\phi^{-1}(U)$. The result follows by Lemma \ref{lem:mixingE}.

 If $Y$ has $n$ components then $g^n$ is mixing on each of the periodic components $Y_i$, $i=0,\ldots,n-1$, and we can decompose $\omega_g(\phi(y))$ into infinite sets $\omega_g(\phi(y))=\bigcup_{i=0}^{n-1}g^i(\omega_{g^n}(\phi(y)))$. Without loss of generality assume $\omega_{g^n}(\phi(y))\subset Y_0$. The same construction as above gives us the backward brach $\{\tx_j\}_{j\leq 0}$ such that $\tx_0=x$ and $\alpha_{g^n}(\{\tx_j\}_{j\leq 0}) = \omega_{g^n}(\phi(y))$, for every $x$ in an open set $U\subset Y_0$. Set:
$$z_i = \begin{cases}\tx_j \hfill\text{ if }i=j\cdot n,\text{ for every } j\in\mathbb{N}_0,\\ g^k(\tx_j)\hspace{0.5cm}\text{ if }i=j\cdot n+k,\text{ for every } 0<k<n,  j\in\mathbb{N}_0.\end{cases}$$
By continuity of $g$, $\alpha_g(\{z_i\}_{i\leq 0}) = \omega_{g}(\phi(y))$. We finish the proof in the same manner as above.
\end{proof}
Theorem \ref{thm:alphaPosEntMap} can be stated in various forms. We describe them in the following series of facts and remarks.
\begin{fact}\label{rem:alpha} The set $\alpha_f(\{z_j\}_{j\leq 0})\setminus \omega_f(y)$ is at most countable and consists from isolated points of $\alpha_f(\{z_j\}_{j\leq 0})$. Consequently, if every isolated point $x$ of $\alpha_f(\{z_j\}_{j\leq 0})$ has $[x]_{\sim}=\{x\}$ then $\omega_f(y)=\alpha_f(\{z_j\}_{j\leq 0}$.
\end{fact}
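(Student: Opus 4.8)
The plan is to deduce everything from the sandwich $\omega_f(y)\subseteq\alpha\subseteq[\omega_f(y)]_{\sim}\cap D(X)$ provided by Theorem \ref{thm:alphaPosEntMap}, where I abbreviate $\alpha:=\alpha_f(\{z_j\}_{j\leq 0})$ and $R:=\alpha\setminus\omega_f(y)$, using the three defining properties of the almost conjugacy $\phi$ together with the fact (established inside the proof of Lemma \ref{lem:P}) that $\phi$ has at most countably many non-degenerate fibres and that their diameters are summable. I would record first that $\phi(\alpha)\subseteq\phi([\omega_f(y)]_{\sim})=\phi(\omega_f(y))=\omega_g(\phi(y))$, and that any $x\in\alpha$ whose fibre $\phi^{-1}(\phi(x))$ is a singleton lies in $\omega_f(y)$: choosing $x'\in\omega_f(y)$ with $\phi(x')=\phi(x)$, which is possible since $\phi(\omega_f(y))=\omega_g(\phi(y))\ni\phi(x)$, forces $x=x'$.

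For countability, take $x\in R$. By the observation just made its fibre is non-degenerate, and since $x\in D(X)$, Definition \ref{df:ac}(3) gives $x\in\partial\phi^{-1}(\phi(x))$. Hence $R$ is contained in the union of the finite sets $\partial\phi^{-1}(w)$ taken over the at most countably many $w\in Y$ with non-degenerate fibre, so $R$ is at most countable.

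For isolation I would argue by contradiction. Suppose $x^*\in R$ and $p_k\in\alpha\setminus\{x^*\}$ with $p_k\to x^*$. If infinitely many $p_k$ lay in $\omega_f(y)$ we would get $x^*\in\overline{\omega_f(y)}=\omega_f(y)$, contradicting $x^*\in R$; so we may assume all $p_k\in R$, whence each $\phi^{-1}(w_k)$, with $w_k:=\phi(p_k)$, is non-degenerate and $w_k\to w^*:=\phi(x^*)$. Passing to a subsequence we may assume the $w_k$ are either pairwise distinct or all equal to a fixed $w$; the latter is impossible, since then the $p_k$ would lie in the finite set $\partial\phi^{-1}(w)$, which does not contain $x^*$, and could not converge to $x^*$ while remaining distinct from it. So the $w_k$ are pairwise distinct with non-degenerate fibres, and by summability of the fibre diameters only finitely many fibres exceed any prescribed size; hence $\diam\phi^{-1}(w_k)\to0$, and together with $p_k\in\phi^{-1}(w_k)$ and $p_k\to x^*$ this gives $d_H(\phi^{-1}(w_k),\{x^*\})\to0$. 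Choosing $q_k\in\omega_f(y)\cap\phi^{-1}(w_k)$, which is non-empty because $w_k\in\omega_g(\phi(y))=\phi(\omega_f(y))$, we obtain $q_k\to x^*$, so $x^*\in\overline{\omega_f(y)}=\omega_f(y)$, a contradiction. Thus every point of $R$ is isolated in $\alpha$. I expect the only genuinely delicate point to be organising the subsequence extraction so that $\diam\phi^{-1}(w_k)\to0$ is actually available; the rest is bookkeeping with the properties of $\phi$.

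Finally, the ``consequently'' clause is immediate: if every isolated point of $\alpha$ has a singleton $\sim$-class, then each $x^*\in R$, being isolated in $\alpha$ by the previous step, has $\phi^{-1}(\phi(x^*))=\{x^*\}$ and hence lies in $\omega_f(y)$ by the first paragraph, so $R=\emptyset$ and $\omega_f(y)=\alpha$.
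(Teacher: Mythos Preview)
Your argument is correct and follows essentially the same route as the paper's: both deduce countability from the fact that non-degenerate fibres are countable with finite boundary in $D(X)$, and both prove isolation by showing that a sequence in the difference set converging to $x^*$ forces a companion sequence in $\omega_f(y)$ (chosen inside the same shrinking fibres) to converge to $x^*$ as well. The paper actually proves the slightly stronger intermediate claim that points of $([\omega_f(y)]_{\sim}\cap D(X))\setminus\omega_f(y)$ are isolated in the whole of $[\omega_f(y)]_{\sim}\cap D(X)$ and then restricts to $\alpha$, whereas you work inside $\alpha$ from the start; either is sufficient.

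One small slip to fix: in the constant-subsequence case $w_k\equiv w$, continuity gives $w=w^*=\phi(x^*)$, so $\partial\phi^{-1}(w)$ \emph{does} contain $x^*$. The contradiction is simply that the $p_k$ lie in the finite set $\partial\phi^{-1}(w)\setminus\{x^*\}$, which is bounded away from $x^*$; your phrase ``which does not contain $x^*$'' should be replaced accordingly.
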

\begin{proof}
Since $[x]_{\sim}=\{x\}$, for all but countably many $x\in D(X)$, and $[x]_{\sim}\cap D(X)=End([x]_{\sim})$ is a finite set, for every $x\in X$, we have $[\omega_f(y)]_{\sim}\cap D(X)\setminus \omega_f(y)$ countable. Moreover, points from $[\omega_f(y)]_{\sim}\cap D(X)\setminus \omega_f(y)$ are isolated in $[\omega_f(y)]_{\sim}\cap D(X)$. If $\{x_i\}\to \{x\}$ is a converging sequence such that $\{x_i\}_{i>0}\subset [\omega_f(y)]_{\sim}\cap D(X)$, then $x_i\in[y_i]_{\sim}$ where $y_i\in \omega_f(y)$, for every $i>0$. If the sequence is not eventually constant, we can assume $[y_i]_{\sim}$ are pairwise disjoint subgraphs of $X$ with diameter tending to 0 as $i\to \infty$ (we can pass to a subsequence if necessary since there is at most finitely many indeces $k$ such that $[y_k]_{\sim}=[y_i]_{\sim}$, for any $i>0$). Since $\omega_f(y)$ is closed, we have $y_i\to x$ and $x\in \omega_f(y)$. Therefore points from  $([\omega_f(y)]_{\sim}\cap D(X))\setminus \omega_f(y)$ are never accumulation points of $[\omega_f(y)]_{\sim}\cap D(X)$. The same holds for $\alpha_f(\{z_j\}_{j\leq 0})\setminus \omega_f(y)$ since $\alpha_f(\{z_j\}_{j\leq 0})\setminus\omega_f(y)\subseteq([\omega_f(y)]_{\sim}\cap D(X))\setminus\omega_f(y)$.
\end{proof}

\begin{rem}\label{rem:omit}
If we omit the condition $f([x]_{\sim})= [f(x)]_{\sim}$, for every $x\in X$, in the assumption of Theorem \ref{thm:alphaPosEntMap}, then we obtain the following weaker result:\\
Let $D(X)$ be a basic set. Then, for every $x \in X\setminus \mathcal{I}(X)$ and every $\omega$-limit set $\omega_f(y)$ such that $\omega_f(y)\subset D(X)$ is infinite, there exists a backward branch $\{z_j\}_{j\leq 0}$ such that $z_0\in[x]_{\sim}$ and $\omega_f(y)\subseteq \alpha_f(\{z_j\}_{j\leq 0})\subseteq[\omega_f(y)]_{\sim}\cap D(X)$. 
\end{rem}
\begin{rem}
The inaccesible points from $\mathcal{I}(X)$ have only finite $\alpha$-limit sets of backward branches being a subset of $D(X)$. Nevertheless, they may have many infinite $\alpha$-limit sets of backward branches being a subset of other basic set $B(Z)$ such that $D(X)\cap B(Z)\neq \emptyset$.
\end{rem}

Combining together Fact \ref{rem:alpha}, Remark \ref{rem:omit} and the fact that $[x]_{\sim}=\{x\}$ for all but countably many points $x$ from $D(X)$ we get the following corollary.
\begin{cor}\label{cor:posEnt}
Let $D(X)$ be a basic set. Then for all but countably many points $x\in D(X)$ and every $\omega$-limit set $\omega_f(y)$ such that $\omega_f(y)\subset D(X)$ is infinite, there exists a backward branch $\{z_j\}_{j\leq 0}$ such that $z_0=x$ and $\alpha_f(\{z_j\}_{j\leq 0})=\omega_f(y)\cup R$ where $R$ is at most countable subset of isolated points of $\alpha_f(\{z_j\}_{j\leq 0})$. Moreover, if every isolated point $x$ of $\alpha_f(\{z_j\}_{j\leq 0})$ has $[x]_{\sim}=\{x\}$ then $R$ is empty.
\end{cor}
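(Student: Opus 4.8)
The plan is to derive the corollary as a purely organisational combination of Remark~\ref{rem:omit} and Fact~\ref{rem:alpha}, the only real content being the verification that the set of points one is forced to discard is countable. First I would introduce the exceptional set
$$E:=\big(\mathcal{I}(X)\cap D(X)\big)\cup\{x\in D(X):[x]_\sim\neq\{x\}\}.$$
To see that $\mathcal{I}(X)\cap D(X)$ is finite, recall that $\mathcal{I}(X)=\bigcup_{i}\phi^{-1}(\mathcal{I}(g^n|_{Y_i}))$, that each $\mathcal{I}(g^n|_{Y_i})$ is a finite set, and that every fibre $\phi^{-1}(p)$ is a subgraph of $X$ whose intersection with $D(X)$ is its finite set of endpoints; hence $\mathcal{I}(X)\cap D(X)$ is a finite union of finite sets. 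To see that $\{x\in D(X):[x]_\sim\neq\{x\}\}$ is countable, use that $\phi|_{D(X)}$ is finite-to-one, that only countably many fibres $\phi^{-1}(y)$ are non-degenerate, and that each such fibre meets $D(X)$ in a finite set. Thus $E$ is countable.

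Next I would fix any $x\in D(X)\setminus E$ and any infinite $\omega_f(y)\subset D(X)$. Since $x\notin\mathcal{I}(X)$, Remark~\ref{rem:omit} produces a backward branch $\{z_j\}_{j\leq 0}$ with $z_0\in[x]_\sim$ and $\omega_f(y)\subseteq\alpha_f(\{z_j\}_{j\leq 0})\subseteq[\omega_f(y)]_\sim\cap D(X)$; and since $x\notin E$ gives $[x]_\sim=\{x\}$, in fact $z_0=x$. Writing $R:=\alpha_f(\{z_j\}_{j\leq 0})\setminus\omega_f(y)$ we get $\alpha_f(\{z_j\}_{j\leq 0})=\omega_f(y)\cup R$ with $R\subseteq([\omega_f(y)]_\sim\cap D(X))\setminus\omega_f(y)$, and Fact~\ref{rem:alpha} says exactly that the latter set is at most countable and consists of isolated points of $\alpha_f(\{z_j\}_{j\leq 0})$; the final sentence of Fact~\ref{rem:alpha} then yields the ``moreover'' clause that $R=\emptyset$ as soon as every isolated point $w$ of $\alpha_f(\{z_j\}_{j\leq 0})$ satisfies $[w]_\sim=\{w\}$. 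This completes the argument.

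I do not expect any serious obstacle at this stage: all the analytic work is done in Theorem~\ref{thm:alphaPosEntMap} (equivalently Remark~\ref{rem:omit}) and in Fact~\ref{rem:alpha}. The only point that genuinely needs care is the countability of $E$. In particular $\mathcal{I}(X)$ itself may fail to be countable inside $X$, since a fibre $\phi^{-1}(p)$ over an inaccessible point $p$ of the model can be a non-degenerate subgraph; what saves the argument is that we only ever use its trace on $D(X)$, and a fibre meets $D(X)$ only in its finitely many endpoints. So the right condition to keep track of is ``$x\in D(X)$ lies over an inaccessible point of the model'', which is finite, rather than ``$x$ is an inaccessible point of $X$''.
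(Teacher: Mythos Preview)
Your proposal is correct and follows exactly the approach indicated by the paper, which proves the corollary in a single sentence: ``Combining together Fact~\ref{rem:alpha}, Remark~\ref{rem:omit} and the fact that $[x]_{\sim}=\{x\}$ for all but countably many points $x$ from $D(X)$.'' Your write-up simply makes this combination explicit; in particular, your careful separation of the exceptional set into $\mathcal{I}(X)\cap D(X)$ and $\{x\in D(X):[x]_\sim\neq\{x\}\}$, together with the observation that only the trace of $\mathcal{I}(X)$ on $D(X)$ matters (and is finite), is a point the paper leaves implicit.
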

We leave the next question open for futher resaerch.
\begin{que}
Let $D(X)$ be a basic set. Is it true that for every $x\in D(X)\setminus \mathcal{I}(X)$ and every $\omega$-limit set $\omega_f(y)$ such that $\omega_f(y)\subset D(X)$ and $\Orb(y)$ is infinite, there exists a backward branch $\{z_j\}_{j\leq 0}$ such that $z_0=x$ and $\alpha_f(\{z_j\}_{j\leq 0})=\omega_f(y)$?
\end{que}

The following example shows that Theorem \ref{thm:alphaPosEntMap} can not be applied when $\omega_f(y)$ is finite. In particular, we will show that there is a basic set $D(X)$ and a fixed point $p\in D(X)$ such that there is no backward branch $\{z_j\}_{j\leq 0}$ with $z_0\in D(X)$ and $\{p\}\subset\alpha_f(\{z_j\}_{j\leq 0})\subset [p]_{\sim}$ (with the exception of the constant backward branch $z_j=p$, for every ${j\leq 0}$).
\begin{exmp}
Let $g$ be a mixing map of the unit interval $I$ and $p,q,r$ be points from Figure \ref{fig:periodicOrb}. Let $D(I,f)$ be a basic set such that there is an almost conjugacy $\phi$ between $f|_{D(I,f)}$ and $g$ with the following properties: $\phi^{-1}(x)$ is not a singleton, for every $x\in\cup_{i\geq 0}f^{-i}(p)$, $\phi^{-1}(p)$ is an $f$-invariant interval $[p_1,p_2]$, where $p_1,p_2$ are fixed points with respect to $f$, $\phi^{-1}(q)$ is an interval $[q_1,q_2]$ such that $f(q_1)=p_1$ and $f(q_2)=p_2$ and $\phi^{-1}(r)$ is an interval $[r_1,r_2]$ such that $f(r_1)=p_2$ and $f(r_2)=p_1$. The fixed point $p_1$ can be reached only by a backward branch from the invariant interval $[p_1,p_2]$, since every backward branch $\{z_j\}_{j\leq 0}$ converging to $p_1$ from the left side or from both sides has $\alpha_f(\{z_j\}_{j\leq 0})$ containing $q_1$ or $r_2$.\end{exmp}

\begin{figure}[h!]\label{fig:periodicOrb}

\begin{tikzpicture}[scale=0.5,line cap=round,line join=round,>=triangle 45,x=1cm,y=1cm]
\clip(-0.5,-0.5) rectangle (10.5,10.5);
\draw [line width=1pt] (0,10)-- (0,0);
\draw [line width=1pt] (0,0)-- (10,0);
\draw [line width=1pt] (10,0)-- (10,10);
\draw [line width=1pt] (0,10)-- (10,10);
\draw [line width=1pt] (0,0)-- (10,10);
\draw [line width=1pt,dash pattern=on 1pt off 2pt] (5,5)-- (5,0);
\draw [line width=1pt,dash pattern=on 1pt off 2pt] (1.25,5)-- (1.25,0);
\draw [line width=1pt,dash pattern=on 1pt off 2pt] (8.75,5)-- (8.75,0);
\draw [line width=1pt] (2.505719867829907,10)-- (0,0);
\draw [line width=1pt] (2.505719867829907,10)-- (5,5);
\draw [line width=1pt] (7.490873240958322,10)-- (5,5);
\draw [line width=1pt] (7.490873240958322,10)-- (10,0);
\begin{scriptsize}
\draw [fill=blue] (5,0) circle (2.5pt);
\draw[color=blue] (5.2,0.4) node {p};
\draw [fill=blue] (8.75,0) circle (2.5pt);
\draw[color=blue] (9,0.4) node {r};
\draw [fill=blue] (1.25,0) circle (2.5pt);
\draw[color=blue] (1.5,0.4) node {q};
\end{scriptsize}

\end{tikzpicture}
\caption{A map mixing interval map $g$ with a fixed point $p$ such that every backward branch $\{z_j\}_{j\leq 0}$ with $\{p\}= \alpha_g(\{z_j\}_{j\leq 0})$ converges to $p$ only from the right side. Every backward branch $\{z_j\}_{j\leq 0}$ converging to $p$ from the left side or from both sides has $\alpha_g(\{z_j\}_{j\leq 0})\cap \{q,r\}\neq\emptyset$, where $q,r$ are preimages of $p$.}\label{fig:zeroentropy}
\end{figure}
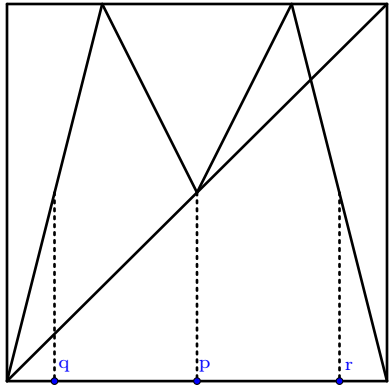

\begin{thm}\label{thm:omegaPosEntMap}
 Let $D(X)$ be a basic set. Then, for every backward branch $\{x_j\}_{j\leq 0}\subset X$ such that $\alpha_f(\{x_j\}_{j\leq 0})\subset D(X)$ there is a point $y\in X$ such that $\omega_f(y)\subseteq [\alpha_f(\{x_j\}_{j\leq 0})]_{\sim}\cap D(X)$. Moreover, if the set $\{x\in\alpha_f(\{x_j\}_{j\leq 0}): [x]_{\sim}=\{x\}\}$ is dense in $\alpha_f(\{x_j\}_{j\leq 0})$ then $\alpha_f(\{x_j\}_{j\leq 0})\subseteq \omega_f(y)\subseteq [\alpha_f(\{x_j\}_{j\leq 0})]_{\sim}\cap D(X)$.
\end{thm}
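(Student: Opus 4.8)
The plan is to transport the backward branch to the model map $g\colon Y\to Y$ of Corollary~\ref{cor:modelBS}, invoke Theorem~\ref{thm:mixingOmega} on the mixing iterates $g^n|_{Y_i}$, and pull the resulting $\omega$-limit set back to $X$ through $\phi$. Observe that, in contrast with Theorem~\ref{thm:alphaPosEntMap}, no hypothesis of the form $f([x]_\sim)=[f(x)]_\sim$ is needed here, since the branch is only pushed \emph{forward} through $\phi$, and the relation $\phi\circ f=g\circ\phi$ always holds. First I would dispose of the trivial case: if $\alpha_f(\{x_j\}_{j\leq 0})$ is finite, then by Lemma~\ref{lem:alphaPer} it is a periodic orbit $\Orb(p)\subseteq D(X)$, and $y=p$ works, with equality in both inclusions (which also settles the ``moreover'' clause since $p\in[p]_\sim$). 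So from now on assume $\alpha_f(\{x_j\}_{j\leq 0})$ is infinite.

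Next, set $\tx_j=\phi(x_j)$ for $j\leq 0$. Since $\phi\circ f=g\circ\phi$, the sequence $\{\tx_j\}_{j\leq 0}$ is a backward branch for $g$, and a routine compactness argument (continuity of $\phi$ together with compactness of $X$) gives $\phi(\alpha_f(\{x_j\}_{j\leq 0}))=\alpha_g(\{\tx_j\}_{j\leq 0})$, and likewise $\phi(\omega_f(z))=\omega_g(\phi(z))$ for every $z\in X$. Let $Y_0,\dots,Y_{n-1}$ be the components of $Y$ as in Corollary~\ref{cor:modelBS}, with $g^n|_{Y_i}$ mixing, and choose $i_0$ with $\tx_0\in Y_{i_0}$. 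After reindexing, $\{\tx_{-kn}\}_{k\geq 0}$ is a backward branch for the mixing graph map $g^n|_{Y_{i_0}}$, so Theorem~\ref{thm:mixingOmega} produces a point $w\in Y_{i_0}$ with $\omega_{g^n|_{Y_{i_0}}}(w)=\alpha_{g^n|_{Y_{i_0}}}(\{\tx_{-kn}\}_{k\geq 0})$. Using the cyclic structure $g(Y_i)=Y_{i+1\bmod n}$, exactly as at the end of the proof of Theorem~\ref{thm:alphaPosEntMap}, one obtains $\alpha_g(\{\tx_j\}_{j\leq 0})=\bigcup_{i=0}^{n-1}g^i\bigl(\alpha_{g^n|_{Y_{i_0}}}(\{\tx_{-kn}\}_{k\geq 0})\bigr)$ and $\omega_g(w)=\bigcup_{i=0}^{n-1}g^i\bigl(\omega_{g^n|_{Y_{i_0}}}(w)\bigr)$, hence $\omega_g(w)=\alpha_g(\{\tx_j\}_{j\leq 0})=\phi(\alpha_f(\{x_j\}_{j\leq 0}))$ (when $n=1$ this step is immediate).

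Finally I would pull back. Since $\phi(D(X))=Y$, choose $y\in\phi^{-1}(w)\cap D(X)$. As $D(X)$ is closed and invariant, $\omega_f(y)\subseteq D(X)$, and by the previous step $\phi(\omega_f(y))=\omega_g(w)=\phi(\alpha_f(\{x_j\}_{j\leq 0}))$; therefore $\omega_f(y)\subseteq\phi^{-1}\bigl(\phi(\alpha_f(\{x_j\}_{j\leq 0}))\bigr)=[\alpha_f(\{x_j\}_{j\leq 0})]_\sim$, which together with $\omega_f(y)\subseteq D(X)$ proves the first assertion. For the ``moreover'' clause, put $S=\{x\in\alpha_f(\{x_j\}_{j\leq 0}):[x]_\sim=\{x\}\}$. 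If $x\in S$, then $\phi(x)\in\phi(\alpha_f(\{x_j\}_{j\leq 0}))=\phi(\omega_f(y))$, so some $x'\in\omega_f(y)$ satisfies $\phi(x')=\phi(x)$; hence $x'\in[x]_\sim=\{x\}$, i.e.\ $x\in\omega_f(y)$. Thus $S\subseteq\omega_f(y)$, and since $\omega_f(y)$ is closed and $S$ is dense in $\alpha_f(\{x_j\}_{j\leq 0})$, we conclude $\alpha_f(\{x_j\}_{j\leq 0})\subseteq\omega_f(y)\subseteq[\alpha_f(\{x_j\}_{j\leq 0})]_\sim\cap D(X)$.

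The only delicate point is the bookkeeping with the $n$ components of $Y$: verifying that $\{\tx_{-kn}\}_{k\geq 0}$ really is a backward branch inside the single component $Y_{i_0}$ and that $\alpha_g(\{\tx_j\}_{j\leq 0})$ is \emph{exactly} the $\omega$-limit set $\omega_g(w)$ reassembled from its pieces, which forces one to deal with the finitely many endpoints shared by distinct $Y_i$. This is handled precisely as in the corresponding step of the proof of Theorem~\ref{thm:alphaPosEntMap}, so I do not expect it to cause genuine difficulty; everything else is the same compactness-and-$\phi$ manipulation used throughout Section~\ref{sec:PosEnt}.
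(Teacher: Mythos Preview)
Your proof is correct and takes a genuinely different, more streamlined route than the paper.

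The paper does not use Theorem~\ref{thm:mixingOmega} as a black box; instead it reopens that proof, extracts the approximating sequence of periodic orbits $\{\Orb(p_k)\}$ in $Y$, lifts each $p_k$ to an $f$-periodic point $q_k\in\phi^{-1}(p_k)$, and defines $\omega_f(y)$ as the Hausdorff limit of $\{\Orb(q_k)\}$. For the ``moreover'' clause the paper invokes Lemma~\ref{lem:P} to control $\diam\phi^{-1}(U)$ away from a finite exceptional set, thereby showing that $\Orb(q_k)$ comes $2\delta$-close to every point of a finite $\delta$-net $M\subset S$, which forces $\alpha_f(\{x_j\})\subseteq\omega_f(y)$. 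In the $n$-component case the paper must also argue separately that density of $S$ in $\alpha_f(\{x_j\})$ descends to density of $S\cap\phi^{-1}(Y_0)$ in $\alpha_{f^n}(\{x_{nj}\})$, which costs an extra paragraph about endpoints.

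Your argument sidesteps all of this: you apply Theorem~\ref{thm:mixingOmega} directly to obtain $w$ with $\omega_g(w)=\alpha_g(\{\tx_j\})$, lift the single point $w$ to $y\in\phi^{-1}(w)\cap D(X)$, and use the equality $\phi(\omega_f(y))=\phi(\alpha_f(\{x_j\}))$. The ``moreover'' clause then follows from the one-line observation that $x\in S$ and $\phi(x)\in\phi(\omega_f(y))$ force $x\in\omega_f(y)$, so $S\subseteq\omega_f(y)$ and closure finishes. This avoids Lemma~\ref{lem:P} entirely and renders the endpoint bookkeeping for the density of $S$ unnecessary, since you only need density in the full $\alpha_f(\{x_j\})$, which is the hypothesis. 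What the paper's approach buys is an explicit periodic approximation of $\omega_f(y)$ inside $X$, which is informative but not required for the statement as written.
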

\begin{proof}
If $\alpha_f(\{x_j\}_{j\leq 0})$ is finite then by Lemma \ref{lem:alphaPer} it is a periodic orbit of some point $y \in X$ and $\omega_f(y) =\alpha_f(\{x_j\})_{j\leq 0}$. Assume that $f|_{D(X)}$ is almost conjugate to a mixing graph $g\colon Y\rightarrow Y$, i.e. the cycle of graphs $Y$ has only one component. Let $\{x_j\}_{j\leq 0}\subset X$ be such that $\alpha_f(\{x_j\}_{j\leq 0})\subset D(X)$ is infinite. Note that $\phi(\alpha_f(\{x_j\}_{j\leq 0})=\alpha_g(\{\phi(x_j)\}_{j\leq 0})$. \\
First assume that the set $S:=\{x\in\alpha_f(\{x_j\}_{j\leq 0}): [x]_{\sim}=\{x\}\}$ is dense in $\alpha_f(\{x_j\}_{j\leq 0})$. Let $\delta>0$ and $P(\delta),\gamma$ be from Lemma \ref{lem:P}. We can find a finite set $M\subset S$ such that $\bigcup_{x\in M}B(x,\delta)\supset \alpha_f(\{x_j\}_{j\leq 0})$ and obviously $\phi(M)\cap P(\delta)=\emptyset$. Applying the construction from the proof of Theorem \ref{thm:mixingOmega} to $\alpha_g(\{\phi(x_j)\}_{j\leq 0})\subset Y$ we obtain a sequence of periodic orbits $\{\Orb(p_k)\}_{k>0}\subset Y$ with increasing periods $\{d_k\}_{k>0}$ such that $d_{H}(\Orb(p_k),\alpha_g(\{\phi(x_j)\}_{j\leq 0})<2\epsilon_k$, for every $k> 0$. Since $\epsilon_k$ goes to 0 as $k\to \infty$, we can find $k>0$ such that $2\epsilon_k<\min\{\gamma,\dist(\phi(M),P(\delta))\}$. Then for every $x\in M$ there is $i\in\{0\ldots,d_k-1\}$ with $g^i(p_k)\in B(\phi(x),2\epsilon_k)$. By Lemma \ref{lem:P}, $\diam\phi^{-1}(B(\phi(x),2\epsilon_k))<\delta$, for every $x\in M$. Therefore $\phi^{-1}(B(\phi(x),2\epsilon_k))\subset B(x,\delta)$, for every $x\in M$. Since $p_k$ has period $d_k$, we can find an $f$-periodic point $q_k\in\phi^{-1}(p_k)$ with period at least $d_k$ and obviously $f^i(q_k)\in\phi^{-1}(g^i(p_k))$, for every $i\in\{0\ldots,d_k-1\}$. It follows that for every $x\in M$ there is $i\in\{0\ldots,d_k-1\}$ with $f^i(q_k)\in B(x,\delta)$. Since  we have assumed that $\bigcup_{x\in M}B(x,\delta)$ covers $\alpha_f(\{x_j\}_{j\leq 0})$, we conclude $\cup_{z\in\Orb(q_k)}B(z,2\delta)\supset \alpha_f(\{x_j\}_{j\leq 0})$ and $\alpha_f(\{x_j\}_{j\leq 0})\subset  \omega_f(y)$ where $ \omega_f(y)$ is the Hausdorff limit of the sequence $\{\Orb(q_k)\}_{k>0}$. 

On the other hand, we can use the sequence of periodic orbits $\{\Orb(p_k)\}_{k>0}\subset Y$ from the proof of Theorem \ref{thm:mixingOmega} to construct $\omega_f(y)$ as the Hausdorff limit of the sequence $\{\Orb(q_k)\}_{k> 0}$ with $\Orb(q_k)\subset \phi^{-1}(\Orb(p_k))$, for every $k> 0$, regardless of the existence of the set $S$. Then $\phi(\omega_f(y))=\alpha_g(\{\phi(x_j)\}_{j\leq 0})=\phi(\alpha_f(\{x_j\}_{j\leq 0})$ and at the same time $\omega_f(y)\subseteq D(X)$ which gives $\omega_f(y)\subseteq [\alpha_f(\{x_j\}_{j\leq 0})]_{\sim}\cap D(X).$ 

 If $Y$ has $n$ components $Y_0,\ldots,Y_{n-1}$ then $g^n$ is mixing on each of the periodic components $Y_i$ and $X=\cup_{i=0}^{n-1}\phi^{-1}(Y_i)$ where sets $\phi^{-1}(Y_i)$ are connected, pairwise disjoint with possibly non-empty intersection in the endpoints and they form a cycle of period $n$. We can decompose $\alpha_f(\{x_j\}_{j\leq 0})$ into infinite sets $\alpha_f(\{x_j\}_{j\leq 0})=\bigcup_{i=0}^{n-1}f^i(\alpha_{f^n}(\{x_{n\cdot j}\}_{j\geq 0})$. Without loss of generality assume $\alpha_{f^n}(\{x_{n\cdot j}\}_{j\geq 0})\subset \phi^{-1}(Y_0)$. First we show that if the set $S=\{x\in\alpha_f(\{x_j\}_{j\leq 0}): [x]_{\sim}=\{x\}\}$ is dense in $\alpha_f(\{x_j\}_{j\leq 0})$ then $S\cap \phi^{-1}(Y_0)$ is dense in $\alpha_{f^n}(\{x_{n\cdot j}\}_{j\geq 0})$. Since $S\cap \Int(\phi^{-1}(Y_0))$ is dense in the open (with respect to the subspace topology) set $\alpha_{f^n}(\{x_{n\cdot j}\}_{j\geq 0})\cap \Int(\phi^{-1}(Y_0))$ it sufficies to show that $\alpha_{f^n}(\{x_{n\cdot j}\}_{j\geq 0})\cap End(\phi^{-1}(Y_0))$ is not isolated from $S\cap \phi^{-1}(Y_0)$. If $z\in \alpha_{f^n}(\{x_{n\cdot j}\}_{j\geq 0})\cap End(\phi^{-1}(Y_0))$ has a pre-image in $\alpha_{f^n}(\{x_{n\cdot j}\}_{j\geq 0})\cap \Int(\phi^{-1}(Y_0))$ then $z$ is either a limit point of $S$ or belongs to $S$ by continuity of $f^n$ and invariance of $S$. If $z$ has only pre-images in $\alpha_{f^n}(\{x_{n\cdot j}\}_{j\geq 0})\cap End(\phi^{-1}(Y_0))$ then $z$ is a periodic point and consequently $z$ being isolated from $S\cap \phi^{-1}(Y_0)$ implies that $z$ is isolated from $S$. But this is impossible since $z\in\alpha_f(\{x_j\}_{j\leq 0})$.

Now we can apply the above procedure to the $\alpha$-limit set $\alpha_{f^n}(\{x_{n\cdot j}\}_{j\geq 0})$ and obtain $ \omega_{f^n}(y)$ with $\alpha_{f^n}(\{x_{n\cdot j}\}_{j\geq 0})\subseteq \omega_{f^n}(y)\subseteq [\alpha_{f^n}(\{x_{n\cdot j}\}_{j\geq 0})]_{\sim}\cap D(X)$ (in case $S$ is not dense in $\alpha_f(\{x_j\}_{j\leq 0})$ we consider only the second inclusion). Obviously $f^i(\alpha_{f^n}(\{x_{n\cdot j}\}_{j\geq 0}))\subseteq f^i(\omega_{f^n}(y))\subseteq f^i( [\alpha_{f^n}(\{x_{n\cdot j}\}_{j\geq 0})]_{\sim})\cap D(X)$, for $i=0,\ldots,n-1$, and therefore:
\begin{multline}
\alpha_f(\{x_j\}_{j\leq 0})=\bigcup_{i=0}^{n-1}f^i(\alpha_{f^n}(\{x_{n\cdot j}\}_{j\geq 0}))\subseteq \omega_f(y)=\bigcup_{i=0}^{n-1}f^i(\omega_{f^n}(y))\\ \subseteq [\alpha_f(\{x_j\}_{j\leq 0})]_{\sim}\cap D(X)=\bigcup_{i=0}^{n-1}f^i([\alpha_{f^n}(\{x_{n\cdot j}\}_{j\geq 0})]_{\sim})\cap D(X).
\end{multline}

\end{proof}
Repeating the arguments from Fact \ref{rem:alpha} we can show that $\omega_f(y)\setminus \alpha_f(\{x_j\}_{j\leq 0})$ consists of isolated points of $\omega_f(y)$ and the following corollary holds.
\begin{cor}\label{cor:omegaPosEntMap}
 Let $D(X)$ be a basic set and $\{x_j\}_{j\leq 0}\subset X$ be a backward branch such that $\alpha(\{x_j\}_{j\leq 0})\subset D(X)$.  If the set $\{x\in\alpha_f(\{x_j\}_{j\leq 0}): [x]_{\sim}=\{x\}\}$ is dense in $\alpha_f(\{x_j\}_{j\leq 0})$ then there is a point $y\in X$ such that $\omega_f(y)=\alpha_f(\{x_j\}_{j\leq 0})\cup R$ where $R$ is at most countable subset of isolated points of $\omega_f(y)$. Moreover, if every isolated point $x$ of $\omega_f(y)$ has $[x]_{\sim}=\{x\}$ then $R$ is empty.
 
\end{cor}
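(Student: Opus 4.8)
The plan is to read this off directly from Theorem \ref{thm:omegaPosEntMap}, reusing the isolation argument of Fact \ref{rem:alpha}. Write $A=\alpha_f(\{x_j\}_{j\leq 0})$. By hypothesis the set $\{x\in A:[x]_\sim=\{x\}\}$ is dense in $A$, so Theorem \ref{thm:omegaPosEntMap} yields a point $y\in X$ with
$$A\subseteq\omega_f(y)\subseteq[A]_\sim\cap D(X)$$
(when $A$ is finite this already gives $\omega_f(y)=A$ via Lemma \ref{lem:alphaPer}). Set $R:=\omega_f(y)\setminus A$; then $\omega_f(y)=A\cup R$ and $R\subseteq\big([A]_\sim\cap D(X)\big)\setminus A$, so it suffices to analyse the set on the right.

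First I would bound $R$. Since $[x]_\sim=\{x\}$ for all but countably many $x\in D(X)$, and since $[x]_\sim\cap D(X)=End([x]_\sim)$ is finite for every $x\in X$, the set $\big([A]_\sim\cap D(X)\big)\setminus A$ is at most countable; hence so is $R$. Next I would argue, exactly as in Fact \ref{rem:alpha}, that every point of $\big([A]_\sim\cap D(X)\big)\setminus A$ is isolated in $[A]_\sim\cap D(X)$. Indeed, if $x_i\to x$ with $x_i\in[A]_\sim\cap D(X)$ and the sequence is not eventually constant, choose $y_i\in A$ with $x_i\in[y_i]_\sim$; since $[y']_\sim\cap D(X)$ is finite for each fixed $y'$, infinitely many of the classes $[y_i]_\sim$ are distinct, so after passing to a subsequence they are pairwise disjoint subgraphs with diameters tending to $0$ (summability of the nondegenerate fibres, cf. Lemma \ref{lem:P}), whence $y_i\to x$ and $x\in A$ because $A$ is closed. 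Thus points of $\big([A]_\sim\cap D(X)\big)\setminus A$ are never accumulation points of $[A]_\sim\cap D(X)$, and a fortiori not of the smaller set $\omega_f(y)$; hence every point of $R$ is isolated in $\omega_f(y)$. This proves the first assertion.

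For the ``moreover'' part, suppose every isolated point of $\omega_f(y)$ has trivial $\sim$-class, and let $r\in R$. By the previous paragraph $r$ is isolated in $\omega_f(y)$, so $[r]_\sim=\{r\}$. But $r\in[A]_\sim$ means $\phi(r)=\phi(y')$ for some $y'\in A$, so $[r]_\sim=[y']_\sim\ni y'$; combined with $[r]_\sim=\{r\}$ this forces $r=y'\in A$, contradicting $r\in R=\omega_f(y)\setminus A$. Therefore $R=\emptyset$ and $\omega_f(y)=A$.

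The argument is brief because the real work is carried out in Theorem \ref{thm:omegaPosEntMap}; the only step requiring attention is the isolation claim, i.e.\ checking that the countably many extra points of $[A]_\sim\cap D(X)$ cannot accumulate on it, which relies on the diameters of the nontrivial fibres $\phi^{-1}(p)$ being summable --- a property inherited from $\phi$ being an almost conjugacy onto a graph.
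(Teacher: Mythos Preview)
Your proof is correct and follows exactly the route the paper indicates: apply Theorem \ref{thm:omegaPosEntMap} to obtain the sandwich $A\subseteq\omega_f(y)\subseteq[A]_\sim\cap D(X)$, then rerun the argument of Fact \ref{rem:alpha} (with the roles of $\alpha$ and $\omega$ interchanged) to show that the residual set $R=\omega_f(y)\setminus A$ is countable and consists of isolated points. The paper's own justification is the single sentence ``Repeating the arguments from Fact \ref{rem:alpha}\ldots''; you have simply unpacked that sentence.
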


In the previous Section \ref{sec:zeroEnt} we have proved that if $h(f)=0$ then every $\alpha_f(\{x_j\}_{j\leq 0})$ is a minimal set, hence it is an $\omega$-limit set of any point from $\alpha_f(\{x_j\}_{j\leq 0})$. Clearly, the same holds for $f$ with positive entropy and $\alpha_f(\{x_j\}_{j\leq 0})$ being a subset of one of the three maximal $\omega$-limit sets which are in common to both zero entropy graph maps and positive entropy graph maps - solenoidal sets, circumferential sets and periodic orbits. In the light of Theorem \ref{prop:max} we can conclude that, for any graph map $f$, every $\alpha$-limit set of a backward branch is an $\omega$-limit set, providing the answer to the following question turns out positive.

\begin{que}
Let $D(X)$ be a basic set and $\alpha_f(\{x_j\}_{j\leq 0})\subset D(X)$, for a backward branch $\{x_j\}_{j\leq 0}$. Is $\alpha_f(\{x_j\}_{j\leq 0})=\omega_f(y)$, for some $y\in X$?
\end{que}
\section*{Acknowledgements}

M. Fory\'s-Krawiec was supported in part by the National Science Centre, Poland (NCN), grant SONATA BIS no. 2019/34/E/ST1/00237: "Topological and Dynamical Properties in Parameterized Families of Non-Hyperbolic Attractors: the inverse limit approach". P. Oprocha was supported in part by Polish Ministry of Science and Higher Education, grant no. 477132/PnH2/2020.

\begin{table}[h]

\begin{tabular}[t]{b{1.5cm} m{10.5cm}}

\includegraphics [width=.09\textwidth]{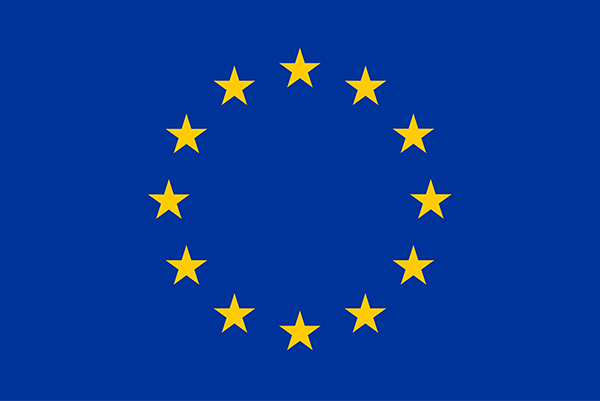} & 
This research is part of a project that has received funding from the European Union's Horizon 2020 research and innovation programme under the Marie Sk\l odowska-Curie grant agreement No 883748.

\end{tabular}
\end{table}

\end{document}